\documentclass[12pt, oneside]{amsart}
\usepackage[margin=1in]{geometry}

\usepackage{amssymb}
\usepackage{tikz}
\usepackage{graphicx}

\usepackage{soul}
\usepackage{hyperref}
\usepackage{tikz-cd}

\newtheorem{theorem}{Theorem}[section]
\newtheorem{lemma}[theorem]{Lemma}

\theoremstyle{definition}
\newtheorem{definition}[theorem]{Definition}

\theoremstyle{remark}

\numberwithin{equation}{section}

\newcommand{\GL}{\text{GL}}
\newcommand{\GLp}{\text{GL}^{+}}
\newcommand{\SL}{\text{SL}}
\newcommand{\Aff}{\mathrm{Aff}}
\newcommand{\Affp}{\mathrm{Aff}^{+}}

\newcommand{\MCG}{\text{MCG}^{+}}

\newcommand{\R}{\mathbb{R}}
\newcommand{\Q}{\mathbb{Q}}
\newcommand{\C}{\mathbb{C}}
\newcommand{\Z}{\mathbb{Z}}
\newcommand{\A}{\mathbb{A}}

\newcommand{\id}{\text{id}}

\newcommand{\lra}{\longrightarrow}
\newcommand{\del}{\partial}
\newcommand{\CDT}{\mathcal{CD}_{\Affp} T^{2}}
\newcommand{\MCQ}{\mathcal{MCQ}_{\Affp} T^{2}}
\newcommand{\fC}{\mathcal{C}}
\newcommand{\fD}{\mathcal{D}}
\newcommand{\fF}{\mathcal{F}}
\newcommand{\fDnn}{\fD_{\omega_{k}}^{\geq}}
\newcommand{\fDnp}{\fD_{\omega_{k}}^{\leq}}
\newcommand{\fDnnK}{\fD_{\omega_{K}}^{\geq}}

\newcommand{\fDnno}{\fD_{\omega_{T}}^{\geq}}
\newcommand{\fDnpo}{\fD_{\omega_{T}}^{\leq}}
\newcommand{\fDnnoK}{\fD_{\omega_{T_{K}}}^{\geq}}
\newcommand{\fDnpoK}{\fD_{\omega_{T_{K}}}^{\leq}}
\newcommand{\fDpos}{\fD^{+}}
\newcommand{\fDneg}{\fD^{-}}
\newcommand{\fDneu}{\fD^{0}}

\begin{document}

\title[Symplectic Tiling Billiards on Affine Tori]{Symplectic Tiling Billiards on Complete Affine Tori}

\author{Charles Daly}
\address{Max-Planck-Institut f\"{u}r Mathematik in den Naturwissenschaften, Leipzig, Germany}
\curraddr{}
\email{charles.daly@mis.mpg.de}
\thanks{Both authors express their gratitude to the Max-Planck-Institut f\"{u}r Mathematik in den Naturwissenschaften for facilitating this research and Richard Schwartz for introducing the authors to the subject along with his helpful insights about the existence of convex quadrilateral tilings.  The first author would like to thank both Max Riestenberg and Arielle Leitner for their insightful discussions on affine structures on the torus.  The first author would also like to thank Magali Jay for her thorough explanation on how tiling billiards relate to interval exchange transformations and other fundamental results of the field.}

\author{Fabian Lander}
\address{Max-Planck-Institut f\"{u}r Mathematik in den Naturwissenschaften, Leipzig, Germany}
\curraddr{}
\email{fabian.lander@mis.mpg.de}
\thanks{}


\date{\today}

\dedicatory{%
  {Dedicated in honor of Richard Evan Schwartz on the occasion of his birthday.}\\[2em]
  \raggedleft
  \footnotesize \textit{``Look Shavey, it's like billiards!''}\\[0.5em]
  \footnotesize --- Rich teaching our cat math with a laser pointer
}

\begin{abstract}
In 2023, Richard Schwartz introduced a new dynamical system which is a marriage of two types of familiar billiards, tiling billiards and symplectic billiards.  In this paper we investigate this dynamical system played on tilings of the plane which arise from non-Euclidean geometries on the torus.  We review the affine analogue of the flat conformal structures on the torus through the work of Oliver Baues and William Goldman, and define an open subset of this deformation space corresponding to markings of complete affine tilings of the plane.  We make this definition precise, and provide algebraic conditions on the symmetries of the tiling to define it.  We then analyze the dynamics of symplectic tiling billiards played on these types of tilings and investigate the long-term dynamics of the system to prove a stability result concerning divergent trajectories.  The divergence is defined in terms of geometric invariants arising from the tiling symmetry group.  We argue that in some sense this divergence is a consequence of the tiles of a non-Euclidean tiling becoming `thin' as one moves far away in the tiling.  To do so we introduce a notion of thinness that is well adapted to the non-Euclidean affine tilings.  
\end{abstract}

\maketitle

\section{Introduction}\label{sec:intro}
Symplectic Tiling Billiards is a recently developed dynamical system introduced by Richard Schwartz in 2023 in his work \emph{Symplectic Tiling Billiards, Planar Linkages, and Hyperbolic Geometry} \cite{Schwartz2025Symplectic}.  In this work, he introduces the system inspired by a union of two other dynamical systems: tiling billiards and symplectic billiards.  Tiling billiards studies the dynamics of typical billiards played on a convex tiling of the plane $T$ where the trajectories go from tile to tile via the rule that the angle of incidence is equal to the angle of refraction.  Specifically, pick a pair of points $(x,y) \in (\del T)^{2}$, on the boundary of the tiling $\del T$, and draw the line segment through $x$ to $y$.  Draw another line starting at $y$ where the angle of incidence is equal to the angle of refraction as in Figure \ref{fig:tilingbil}.  This new line will hit another point in the boundary of the tiling which we call $z \in \del T$, and this produces a new pair of points $(y,z) \in (\del T)^{2}$.  We iterate this process indefinitely unless we hit a vertex of the tiling.  This system of billiards was originally introduced by Davis, DiPietro, Rustad, and St. Laurent in 2015, and since then has enjoyed consistent interest and research where the principal results address the existence of periodic trajectories \cite{Davis2018Negative}.  In the original work where the system was introduced, Davis et al. showed the dynamical system played on certain tilings admits an abundance of bounded and unbounded periodic trajectories.  In Corollary 4.2 and Theorem 4.10 of this work they showed every tiling which arises from the subdivision of the plane by a finite number of non-parallel lines where the resulting tiling consists of congruent triangles with 6 triangles meeting at each vertex admits a periodic trajectory, and, almost all admit a period $10$-trajectory around $2$-vertices.  In a follow up work by Davis and Hooper, they show that for tilings from the subdivision process just described where the resulting polygons are regular hexagons and triangles, called trihexagonal tilings, that every bounded trajectory defined for all time is periodic [Theorem 1.8(a)] \cite{DavisHooper2018}.  
\\
\\
Since the original work by Davis et al., the dynamics in the triangular tiling case described above seem to be well understood.  The combined works of Hubert and Paris-Romaskevich and Paris-Romaskevich show every trajectory falls into a dichotomy of either periodic or linearly escaping, and the possible periods of closed trajectories must be of the form $(4n+2)$.  In the latter work, Paris-Romaskevich proves the `Tree-Conjecture' yielding a geometric association between closed periodic orbits and trees in the triangular tiling, see [Theorem 1] and [Theorem 3] of the respective works \cite{HubertParisRomaskevich2022} \cite{ParisRomaskevich2019}.  More recently, Jay showed that the dynamical system appropriately adapted to the context of polygons inscribed in a half circle admits open sets of trajectories which escape to infinity with unbounded distance from an asymptotic direction of divergence [Theorem A] \cite{Jay2025}.  
\\
\begin{figure}
\includegraphics[scale=0.5]{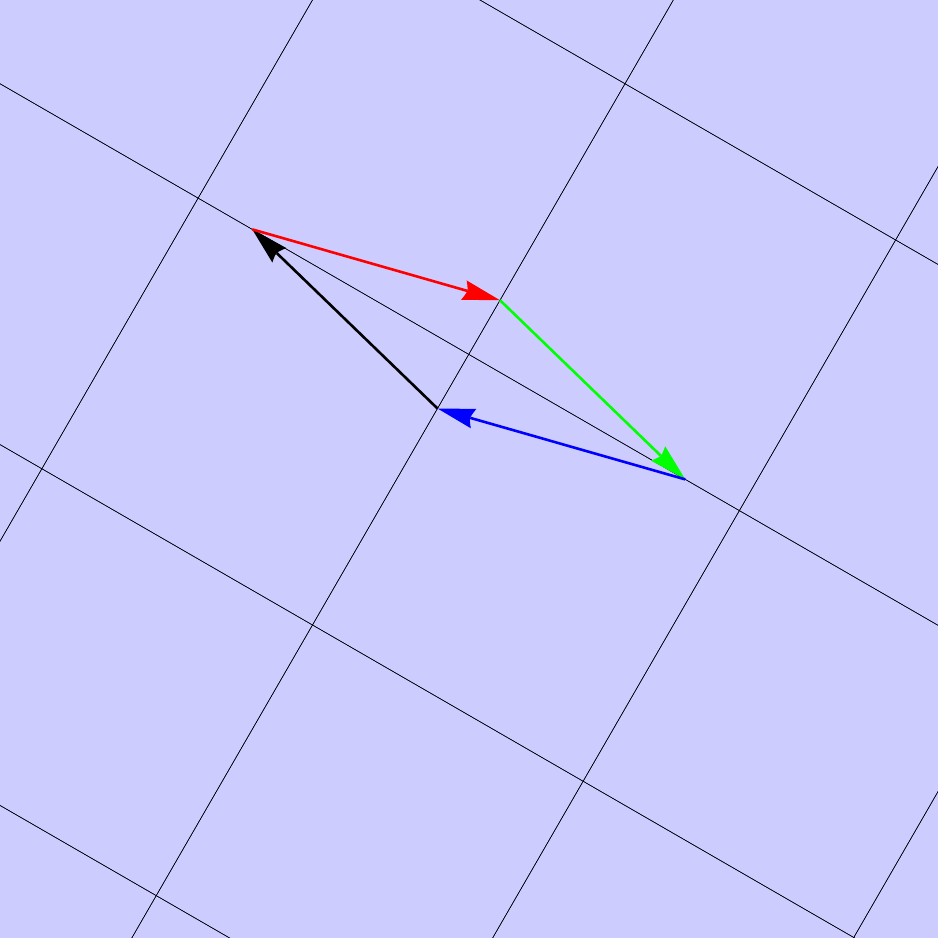}
\caption{A periodic trajectory in the tiling billiards rule.  The points $x,y$ which are the initial and terminal points of the black arrow respectively determine the point $z$ which is the terminal point of the red arrow.  Then the points $y,z$ determine the terminal point $w$ of the green arrow, and the points $z,w$ determine the point $x$ once again.  Periodicity and drift periodicity are the only two possible long term behaviors for this type of tiling.  
}\label{fig:tilingbil}
\end{figure}

Symplectic billiards on the other hand is a billiard game defined on the boundary of a compact convex body $C$ of the plane where the rule defining how to get from one pair of points on the boundary to the next is defined in terms of \emph{parallelism}.  Specifically, pick a pair of points $(x, y) \in (\del C)^{2}$ and let $L_{y}$ be the tangent to $\del C$ at $y$.  The line through $x$ parallel to $L_{y}$ meets $\del C$ in one further point $z$, i.e. the chord $xz$ is parallel to the tangent at $y$.  The trajectory continues from $y$ to $z$, giving the new pair $(y,z) \in \del C^{2}$ which we iterate through unless we reach an instance where the translated tangent line fails to intersect $\del C$ at another point.  Figure \ref{fig:symptil} below illustrates this rule.  This system was originally introduced by Albers and Tabachnikov in 2017, wherein they show the corresponding map on phase space is a monotone area-preserving twist map on the phase space of the system, and, calculate the area of phase space in terms of the billiard table [Theorem 1] \cite{ALBERS2018822}.  In the same work, the authors also showed two possible extremes of the existence of invariant curves in the phase space.  They showed if the boundary $\del C$ has a point where the curvature vanishes, then the system admits no such invariant curves in the phase space, whereas if $\del C$ has everywhere positive curvature, for example an ellipse, then there are an abundance of invariant curves in phase space, [Theorem 2].\\
\\
In more recent work by Albers, Lander, and Westermann  \cite{albers2026symplecticbilliardspairspolygons}, the authors adapted this game to a pair of polygons $P,Q$.  The rule determining how to go from one pair of points in $\del P \times \del Q$ is adapted from the symplectic case above but where the trajectory directions through a point in the boundary are determined by the tangent line through the point on the \emph{other} boundary.  This is the non-tiling context of symplectic tiling billiards which we define in Section \ref{sec:stb}.  In this context, the aforementioned authors were able to show that every orbit is periodic under the hypothesis that the set of critical points of this system is finite [Theorem 1.1].  In the same work, they were also able to produce pairs of convex polygons which admit no periodic orbits at all [Theorem 1.4].  
\\
\begin{figure}
\includegraphics[scale=0.5]{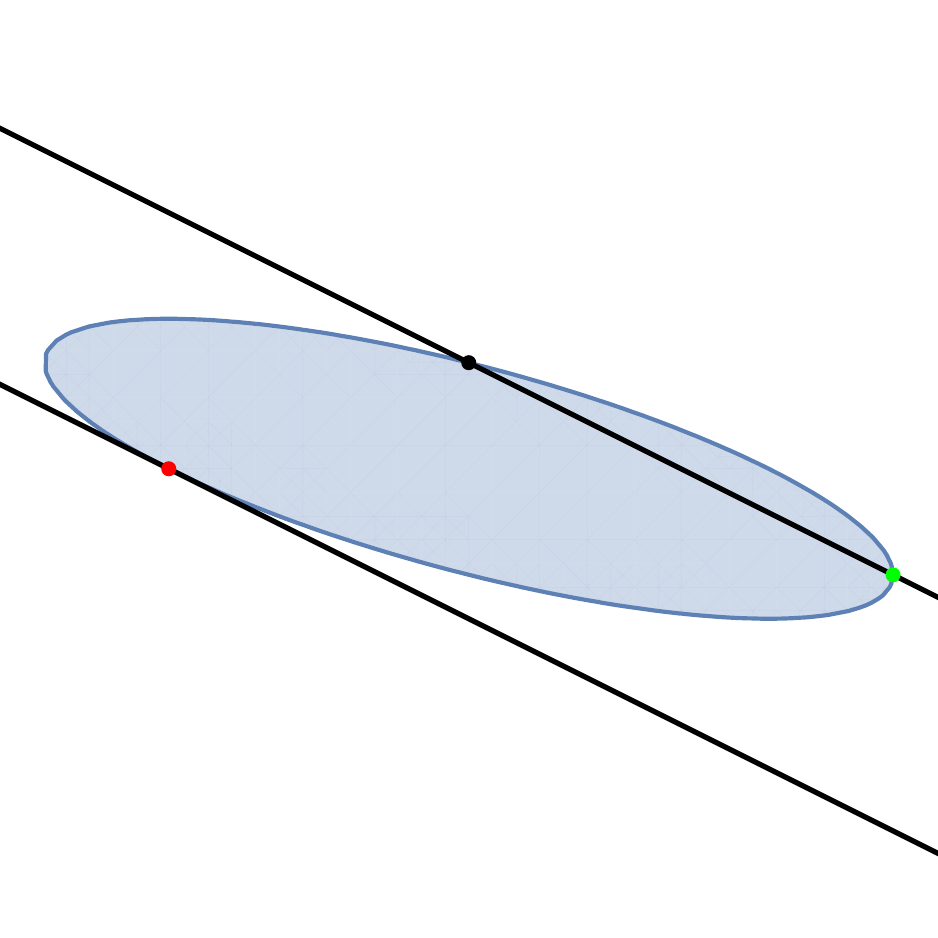}
\caption{A single iteration of the symplectic tiling billiard rule on the convex unit ball $C$ defined by the quadratic form $\frac{1}{4}x^{2} + xy + 2y^{2}$.  The points $x,y \in \del C$ are drawn in black and red respectively.  The tangent line $L_{y}$ through $y$ is moved to $x$ which will generically hit another unique point $z \in \del C$.  We illustrate this point here in green $z$.  
}\label{fig:symptil}
\end{figure}

In 2023, Schwartz introduced a combination of tiling billiards and symplectic billiards which he coined \emph{symplectic tiling billiards}.  To play this game one needs to start with two tilings of the plane, call them $(T_{A},T_{B})$.  For our purposes, these tilings are always polygonal which we will later assume to be tiled by quadrilaterals.  The actual rule to define the game is somewhat technical, but loosely speaking the game takes a pair of points $(p_{1},q_{1}) \in \del T_{A} \times \del T_{B}$, where each point is equipped with a direction, and produces another pair of points $(p_{2},q_{2}) \in \del T_{A} \times \del T_{B}$ and directions in such a way that the trajectories in a single tiling are dependent on both tilings, and, the rule producing a new pair of points is defined entirely in terms of affine invariants such as lines and parallelism as in the symplectic billiards case.  A formal definition can be found in Section \ref{sec:stb}.  One benefit of having the rules defined in terms of affine geometry is that the dynamics are then affinely invariant.  A bit more precisely, that is to say if we understand the dynamics for a fixed pair of tilings $(T_{A},T_{B})$, then we understand the dynamics for any pair of tilings $(T_{A}',T_{B}')$ where $T_{A}'$ and $T_{B}'$ are the tilings obtained from $T_{A}$ and $T_{B}$ after a fixed affine transformation.  Thus affine geometry is the natural context to better understand this dynamical system.  This is in contrast with tiling or Birkhoff billiards where the natural context is Euclidean or similarity geometry.\\
\\
Because symplectic tiling billiards requires tilings of the plane to play the game, it is natural to ask which tilings to consider.  Because of our observation that the dynamics of the game are affinely invariant, a natural context to consider is tilings that arise from \emph{complete affine tori}.  These are affine structures on the torus, as defined in Section \ref{ssec:afgeo}.  Theorem \ref{thm:goodtiles} shows that these complete affine tori give rise to `good' pairs of tilings adapted to this dynamical system, and, we classify the collection of such \emph{marked} complete affine tori which admit these tilings in Theorem \ref{thm:marked_tiles} in terms of algebraic inequalities.  Figure \ref{fig:typicalmarked} illustrates the generic sort of tiling we have in mind.  After establishing that there is a rich space of tilings to play the game on, we then analyze the long-term dynamics of this system in Section \ref{sec:divorbs}.  To do so we introduce a notion of thinness relative to another quadrilateral as in Definition \ref{def:pthin} and analyze the shapes of tiles as one goes `far out' in the tiling as in Lemma \ref{lem:getthin}.  Using these ideas and applying the context to when our tilings admit a translational symmetry, we produce pairs of divergent trajectories in Theorem \ref{thm:eucdiv} and Theorem \ref{thm:noneucdiv} where divergence is measured in terms of a geometric invariant of the tiling.  Experimentations are illustrated to both inform and support our claims as in Figure \ref{fig:symtil7}.  
\\
\begin{figure}
\includegraphics[scale=0.5]{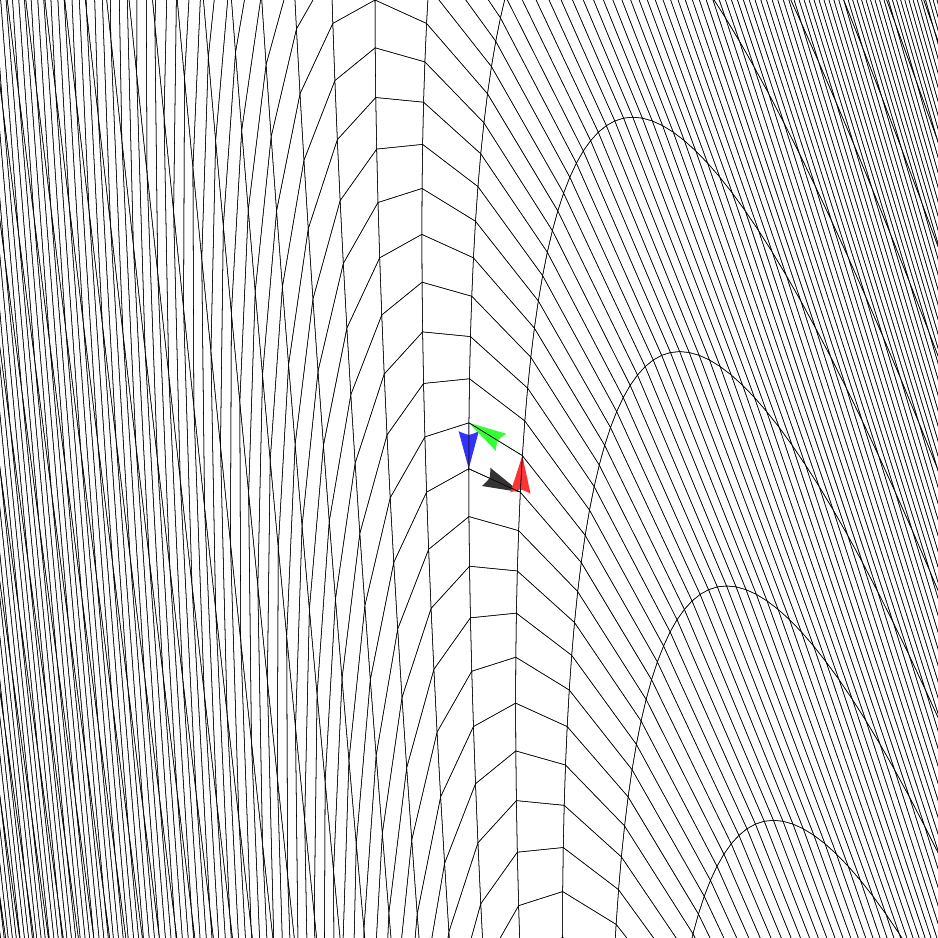}
\caption{A tiling of the plane by convex quadrilaterals where the tiling admits $\Z^{2}$-symmetry by affine transformations.  The arrows of this tiling correspond to a \emph{marking} as in Definition \ref{def:mark_tile}.  This tiling is the developing image of a complete rational affine torus with period parameters $k = (1,2/10)$ as defined in Equation \ref{eq:per_pair}.    
}\label{fig:typicalmarked}
\end{figure}

This paper is organized into four sections.  Section \ref{sec:intro} is background and motivation.  Section \ref{sec:stb} addresses the dynamical system, establishes some terminology we will regularly use throughout the work, and provides an overview of some results by Schwartz's original work \cite{Schwartz2025Symplectic}.  Section \ref{sec:cat} reviews some basics of affine planar geometry and covers the classification of complete affine structures on the torus as carried out by Oliver Baues originally in 1999 and again by Oliver Baues and William Goldman in 2005 \cite{Baues1999Gluing} \cite{BauesGoldman2005}.  In this section we adapt their theory and define a subspace of the space of marked complete affine tori which we call the \emph{marked convex complete tilings} denoted by $\MCQ$.  This space can be thought of as the space of counter-clockwise oriented quadrilaterals $Q$ with a preferred oriented edge, up to affine equivalence, that tile the full plane by commuting affine symmetries taking opposite edges to edges.  We describe this space algebraically in terms of Baues and Goldman's parametrization of marked complete affine tori.  We then prove that for any pair of points in this space $\MCQ$ we can find tilings which represent them such that the tilings are `strongly' transverse in some sense as in Theorem \ref{thm:goodtiles}.  In Section \ref{sec:divorbs} we prove dynamical results about symplectic tiling billiards played on pairs of tilings as constructed in Section \ref{sec:cat}.  We define notions of divergence and thinness that are well adapted to our context in Definition \ref{def:divpath} and Definition \ref{def:pthin}.  We prove that this notion of thinness is preserved along certain trajectories of the game under the assumption our tilings admit a translational symmetry, and find tiles that are sufficiently `far away', so that these tiles admit divergent trajectories in Theorem \ref{thm:eucdiv} and Theorem \ref{thm:noneucdiv}.  Moreover we show this condition is stable for such pairs of tiles.


\section{Symplectic Tiling Billiards}\label{sec:stb}
It is the purpose of this section to provide a formal definition of symplectic tiling billiards and address some of the known results of this dynamical system.  We conclude the section with Lemma \ref{lem:gen_good} which gives us an ample phase space to play the game on for our types of tilings.  Let us fix some notation.  By a tiling of the plane $T$, we mean an infinite collection of finite area non-overlapping polygons in $\R^{2}$ whose union is all of the plane, where every edge of the tiling meets exactly two polygons, and, every vertex meets only finitely many polygons.  Because our tiling consists of a union of polygons, notions of vertices, edges, and interiors of polygons all make sense.  The boundary of the tiling denoted by $\del T$, is the union of all the edges.  For a fixed tile $P$, we denote by $\del P$ the union of its edges and we say a point $p \in \del P$ is an \emph{interior point of} $\del T$ if it is not a vertex.  By definition, every interior point of an edge is contained in exactly two polygons.  As the edges of the tiling will play a considerable role in this work we introduce the circle of directions and the \emph{edge directions} of a tiling $T$.

\begin{definition}\label{def:cded}
The \emph{circle of directions}, $\fD$, is the space of all vectors in $\R^{2}\setminus 0$ up to positive scaling.  That is, two vectors $0 \neq v,w$ determine the same direction if and only if $v = \lambda w$ for some $\lambda > 0$.  We denote the direction determined by a vector $0 \neq v \in \R^{2}$ by $[v]$.  
\\
\\
The set of \emph{edge directions} of a tiling $T$, denoted by $\fD_{T}$, is defined to be the collection of all positive and negative directions of vectors representing edges of tiling $T$.  That is to say $\fD_{T}$ is the subset of all $[v] \in \fD$ where $v$ is parallel to an edge of a tiling $T$.  
\end{definition}

Having established these preliminaries, we now describe the rules of the original formulation of the dynamical system.  Begin with a pair of tilings of the plane $T_{A}$ and $T_{B}$.  Following Schwartz, define a \emph{particle} as a pair $(p,v) \in \del T_{A} \times (\R^{2}\setminus 0)$ where $p$ is an interior point of an edge of $\del T_{A}$ and $v$ is a vector transverse to the edge of $\del T_{A}$ containing $p$ which we call the direction of $p$ [Section 2.1] \cite{Schwartz2025Symplectic}.  We need to consider particles instead of simply points because otherwise there is ambiguity as to which tile to move into.  Abusively we denote the particle by $p$, but keep in mind it comes with a direction.  
\\
\\
We pick a pair of particles $(p_{1},q_{1})$ and describe a rule to produce another pair of particles.  First draw the line through $p_{1}$ parallel to $L_{q_{1}}$ where $L_{q_{1}}$ is the line through $q_{1}$ parallel to the edge of the tiling of $T_{B}$ containing it.  This line has two possible directions to go along from $p_{1}$, and we choose the one the particle is pointed towards.  This will generically hit another point in $\del T_{A}$ which we label by $p_{2}$ and let the new direction of $p_{2}$ equal the direction of the ray just travelled along.  \\ 
\\
Next draw the line through $q_{1}$ parallel to $L_{p_{2}}$ where $L_{p_{2}}$ is the line through $p_{2}$ parallel to the edge of the tiling of $T_{A}$ containing it.  Again, we choose the direction of $L_{p_{2}}$ that $q_{1}$ is pointed towards.  Generically this will hit another point in $\del T_{B}$ which we label by $q_{2}$, and define the new direction of $q_{2}$ to be the direction of the ray we just travelled along.  
\\
\\
This produces our new pair of particles $(p_{2},q_{2})$ and we iterate this process.  Figure \ref{fig:stbex} illustrates this dynamical system.  There are some technical difficulties that one could run into.  For example, we are implicitly assuming that the edge containing $p_{n}$ is not parallel to the edge containing $q_{n}$.  Moreover it is possible to run into a vertex.  If either of these possibilities occur, the dynamical system breaks.  It is for this reason we generally wish to consider tilings which are \emph{transverse} in the sense that any pair of edges from $T_{A}$ and $T_{B}$ span all of the plane.  In this case one then only has to worry about whether the trajectories run through vertices.  
\\
\begin{figure}
	\begin{minipage}{0.45\textwidth}
		\includegraphics[width=\textwidth]{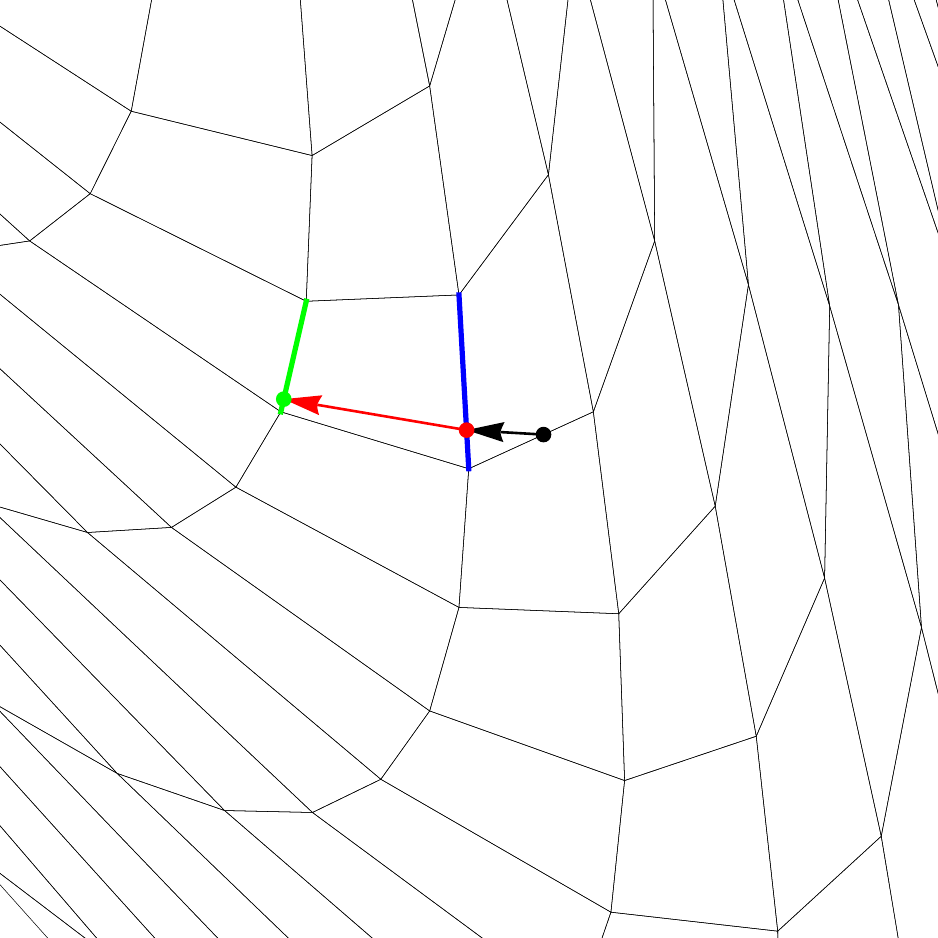}
	\end{minipage}
	\hfill
	\begin{minipage}{0.45\textwidth}
		\includegraphics[width=\textwidth]{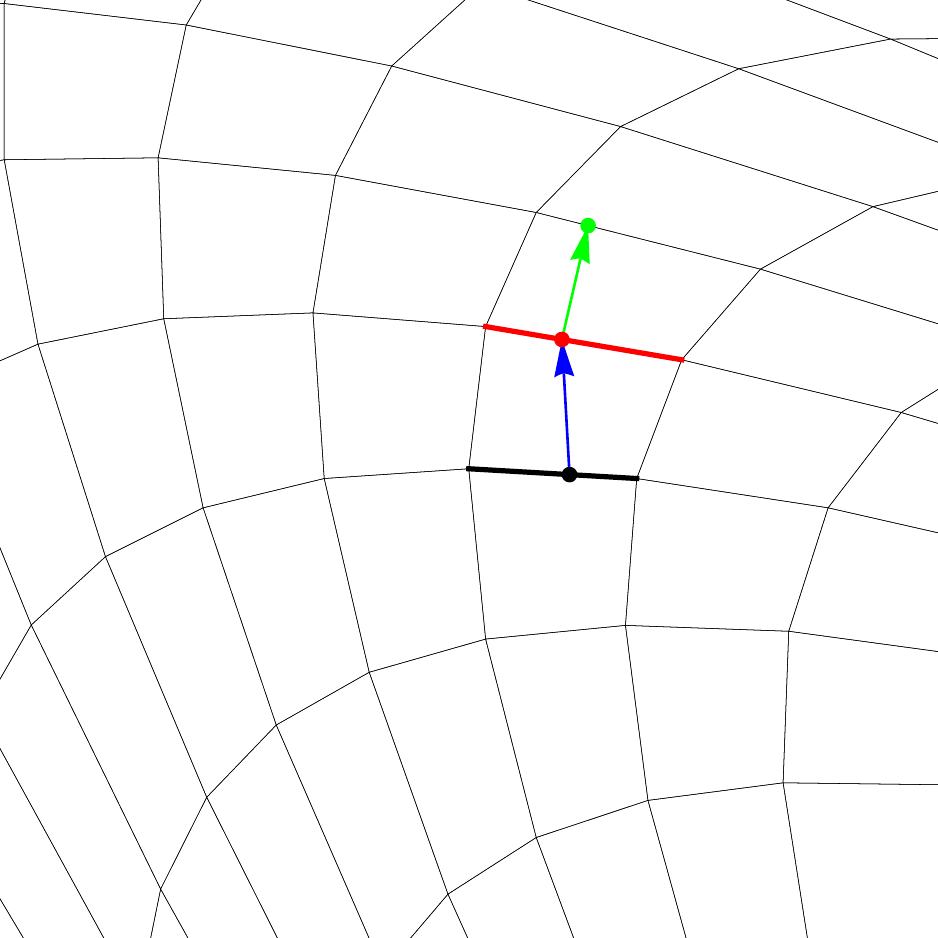}
	\end{minipage}
	\caption{An example of the rule of symplectic tiling billiards.  The particles $(p_{1},q_{1})$ are illustrated in black.  Parallel edges and trajectories are drawn in the same color.  This configuration produces the new pair of particles $(p_{2},q_{2})$ in red.  Another iteration produces the new pair of particles $(p_{3},q_{3})$ in green.  The initial directions of $(p_{1},q_{1})$ are suppressed but inferable from the picture as they determine the directions of the first arrows.}\label{fig:stbex} 
\end{figure}

Schwartz originally considered the system in two contexts.  In the first instance, he considers the pair of tilings where $T_{A}$ is the typical Euclidean square tiling by horizontal and vertical translations and where $T_{B}$ is obtained by rotating $T_{A}$ via multiplication by the unit complex number $z_{t} = (1-t^{2},2t)/(1+t^{2})$ for rational $t$.  Through these families he was able to produce a range of examples where trajectories exhibit vastly different dynamical properties.  For example if $t = 1/3$ it appears most orbits are aperiodic, bounded, and converge to some limiting cycle, whereas if $t = 7/11$ these tilings admit divergent orbits [Figure 2.2] [Figure 2.3] \cite{Schwartz2025Symplectic}.\\
\\
In the case where $t = 7/11$ he observed a behavior in which the orbits tend to spiral around a vertex.  This sort of local picture is the second context Schwartz studied using his construction of \emph{sunbursts} which are, loosely speaking, a finite collection of half-rays emanating from the origin subdividing the plane into finitely many unbounded sectors and whose convex hull is the whole plane.  By analyzing the dynamics of symplectic tiling billiards played locally on two pairs of sunbursts $(S_{A}, S_{B})$ which satisfy a certain compatibility property, he was able to prove the existence of periodic orbits on $(S_{A}, R_{\theta}S_{B})$ for some $\theta \in S^{1}$ [Theorem 1.1].  Through the existence of these periodic orbits he produced another bijection witnessing the celebrated correspondence between similarity classes of strictly convex equilateral $N$-gons and similarity classes of strictly convex equiangular $N$-gons.  He shows his correspondence is algebraic in the appropriate sense of the term [Lemma 4.4].  This gave rise to a novel relation between this local version of symplectic tiling billiards and hyperbolic structures as constructed in Thurston's \emph{Shapes of Polyhedra} \cite{Thurston1998Shapes}.  \\
\\
We now introduce a slightly modified version of the dynamical system which is adapted to our circumstances.  Because we will exclusively work with tilings of $\R^{2}$ by convex quadrilaterals that exhibit $\Z^{2}$-symmetry, we may unambiguously specify a tile by an integer pair $(n,m) \in \Z^{2}$ once we pick an oriented-edge of the tiling and insist the tiles are oriented counterclockwise.  The tile corresponding to $(0,0)$ is the tile whose interior lies to the left of the picked oriented-edge.  Figure \ref{fig:moving} illustrates this convention.  It is for this reason we call such a choice of an oriented edge a \emph{marking} of the tiling.  The formal Definition \ref{def:mark_tile} is carried out in more detail later, but for now, we only need to use the fact that a marking allows us to unambiguously specify every tile of $T$ by an integer pair $(n,m)$.  \\
\\
We instead consider the game on two \emph{marked} tilings $(T_{A},T_{B})$ of $\R^{2}$ by convex quadrilaterals with $\Z^{2}$-symmetry, and instead of using pairs of particles $(p,v),(q,w)$, we may instead use pairs of point-tiles $(p,(n_{1},m_{1}))$ and $(q,(n_{2},m_{2}))$.  When drawing the line $L_{q}$ through $p$, the choice of tile $(n_{1},m_{1})$ picks out a choice of direction parallel to $L_{q}$, namely the one pointed to the interior of the tile $(n_{1},m_{1})$.  We may therefore identify configurations of the game with pairs of point-tiles $(p,(n_{1},m_{1}))$ and $(q,(n_{2},m_{2}))$.  We topologize the first coordinates $p,q$ by pairs of elements in the unit square boundary taking edges to edges and vertices to vertices.  In so doing the space of pairs of point-tiles is readily seen to be identifiable with $(S^{1}\times \Z^{2})\times (S^{1}\times \Z^{2}) \simeq T^{2}\times \Z^{4}$ which we denote by $\fC$.  This simplification is technical but facilitates a more simply stated proof of Lemma \ref{lem:gen_good} below.  We note that this proof is modeled directly from Definition 2.11 and Remark 2.12 of Albers, Lander, and Westermann \cite{albers2026symplecticbilliardspairspolygons}.  The main idea is to show that the set of configurations in $\fC$ which reach a vertex after finitely many iterations is measure zero.  One may do so by starting at a vertex and choosing directions from the edge directions of the other tiling, $\fD_{T_{A}}$ or $\fD_{T_{B}}$, to head out along.  This will define a countable collection of intersection points which we wish to remove.  Applying this argument inductively proves the result.  
%
\begin{lemma}\label{lem:gen_good}
Let $T_{A}$ and $T_{B}$ be transverse convex quadrilateral tilings of the plane which admit $\Z^{2}$-symmetry.  The subset of configurations which terminate in finitely many iterations is a measure zero subset of $\fC$.  Thus the set of configurations which extend for all iterations has full measure in $\fC$.  
\end{lemma}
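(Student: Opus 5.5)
Since $\fC \simeq T^{2}\times\Z^{4}$ carries the product of Lebesgue measure on $T^{2}$ with counting measure on $\Z^{4}$, it is enough to show that the set of terminating configurations meets each slice $T^{2}\times\{(n_{1},m_{1},n_{2},m_{2})\}$ in a Lebesgue-null set. A countable union of null sets is null, so I will write the bad set as a countable union of sets that are each contained in finitely many curves (lines or points) in $T^{2}$.

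\emph{Step 1: how a configuration terminates.} By transversality, the edge through $p_{n}$ is never parallel to the edge through $q_{n}$, so the parallel line drawn from one particle is always transverse to the edge it starts on and leaves into the interior of the chosen tile. Since tiles are bounded convex quadrilaterals, this ray exits the tile at exactly one point of its boundary. Hence a configuration terminates only when some $p_{k}$ or $q_{k}$ is a vertex. Write $B_{k}$ for the set of configurations whose first $k$ iterates are defined and whose $k$-th iterate has a vertex in one of its coordinates. The terminating set is $\bigcup_{k\ge 0} B_{k}$, and it suffices to show each $B_{k}$ is null.

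\emph{Step 2: countability of relevant directions.} By $\Z^{2}$-symmetry the tiling $T_{A}$ has countably many edges and vertices, so $\fD_{T_{A}}$ is countable; the same holds for $T_{B}$. Every step of the game moves $p$ along a direction in $\fD_{T_{B}}$ and moves $q$ along a direction in $\fD_{T_{A}}$. Fix a finite \emph{itinerary}: a sequence of tiles visited, together with the edges hit, for the first $k$ steps. There are countably many itineraries.

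On the set of initial configurations realizing a fixed itinerary, the map $(p_{1},q_{1})\mapsto(p_{k},q_{k})$ is a composition of maps that translate a point along a fixed direction onto a fixed edge. In edge-length coordinates on $T^{2}$ this composition is affine, and it is injective. The key observation is the triangular structure of the rule:
\begin{itemize}
\item $p_{j+1}$ depends on $p_{j}$ together with the fixed direction of the edge containing $q_{j}$;
\item $q_{j+1}$ depends on $q_{j}$ together with the fixed direction of the edge containing $p_{j+1}$.
\end{itemize}
Once the itinerary is fixed, the $p$-coordinate therefore evolves by a bijective affine map of the $p$-coordinate alone, and likewise for the $q$-coordinate. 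Each of these is a central projection along a parallel direction between two transverse segments, which is an affine bijection.

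\emph{Step 3: the bad set for a fixed itinerary.} Requiring $p_{k}$ (or $q_{k}$) to be a vertex pins one coordinate to a point. Pulling back along the inverse affine bijection, this cuts out a set of the form $\{\text{point}\}\times S^{1}$ or $S^{1}\times\{\text{point}\}$ in $T^{2}$, which has measure zero.

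This is the paper's inductive picture: start at a vertex and shoot back along the countably many directions in $\fD_{T_{A}}$ or $\fD_{T_{B}}$. Doing this for $k$ steps produces countably many bad points on edges.

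Taking the union over $k$ and over all itineraries gives a countable union of null sets, so the terminating set is null and its complement has full measure.

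The main obstacle is making the coordinate-wise decoupling in Step 2 rigorous: one has to check that fixing the itinerary really does determine all the directions used. I would handle this by induction on $k$. The inductive claim is that, given the itinerary, the edge containing each iterate is fixed, and hence so is the next direction of travel. This bookkeeping is the step that requires transversality, since without it a step could run along an edge and the direction would not be well defined.
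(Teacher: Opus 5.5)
Your proposal is correct and follows essentially the paper's argument. Both show that in each slice $T^{2}\times\{(n_{1},m_{1},n_{2},m_{2})\}$ the terminating configurations lie in countably many circles $\{\mathrm{pt}\}\times S^{1}$ or $S^{1}\times\{\mathrm{pt}\}$. The paper shoots backward from vertices along the countably many edge directions of the other tiling, while you use the forward itinerary decomposition. Your $p$/$q$ decoupling is exactly what justifies the paper's removal of entire circles, and you also handle general tilings rather than the paper's square/parallelogram illustration. One small wording fix: each step map is a parallel projection (not a central projection) between segments transverse to the projection direction, which is what transversality guarantees.
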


\begin{proof}
For the sake of simplicity, let us assume that $T_{A}$ is the unit square tiling and $T_{B}$ is a parallelogram tiling transverse to $T_{A}$ so $\fD_{T_{A}}$ and $\fD_{T_{B}}$ are disjoint four point sets of the circle of directions $\fD$.  The arguments provided in this case carry out the same way for the more general tilings, but for the sake of readability and illustration, we restrict our attention to this case. \\
\\
Preliminarily we must remove from $\fC$ all configurations of the form $\left((p,(n_{1},m_{1})),(q,(n_{2},m_{2}))\right)$ $\in \fC$ where $p$ is a vertex.  For each fixed choice of integer pairs $((n_{1},m_{1}), (n_{2},m_{2}))$ the collection of all such $p$ is readily seen to be measure zero as it determines four circles in $T^{2}$.  Figure \ref{fig:torusvert} illustrates this.  The same is true for when $q$ is a vertex.  As the set of configurations which begin at a vertex for the fixed element of $\Z^{4}$ has measure zero, the union of all such ones over $\Z^{4}$ is also measure zero as the countable union of measure zero sets is measure zero.
\\
\begin{figure}
\begin{center}
\begin{tikzpicture}[scale=1.5]

	\def\side{4}

  	\fill[blue!20] (0,0) rectangle (\side,\side);

  	\draw[thick] (0,0) rectangle (\side,\side);
	
	\draw[line width=1pt, -] (\side/4,0) -- (\side/4,\side);
	\draw[line width=1pt, -] (2*\side/4,0) -- (2*\side/4,\side);
	\draw[line width=1pt, -] (3*\side/4,0) -- (3*\side/4,\side);
	\draw[line width=1pt, -] (\side,0) -- (\side,\side);
	\draw[line width=1pt, -] (0,\side) -- (0,0);
	
	\draw[line width=1pt,red, -] (0,\side/4) -- (\side,\side/4);
	\draw[line width=1pt,red, -] (0,2*\side/4) -- (\side,2*\side/4);
	\draw[line width=1pt,red, -] (0,3*\side/4) -- (\side,3*\side/4);
	\draw[line width=1pt,red, -] (0,0) -- (\side,0);
	\draw[line width=1pt,red, -] (\side,\side) -- (0,\side);
	
	\node[xshift=-20pt, yshift=-20pt] at (0,0) {$((n_{1},m_{1}), (n_{2},m_{2}))$};
	\node[xshift=20pt, yshift=-20pt] at (\side,0) {$S^{1}$};
	\node[xshift=-20pt, yshift=+20pt] at (0,\side) {$S^{1}$};

\end{tikzpicture}
\caption{A drawing of the subset of $\fC$ at the fixed index pairs $((n_{1},m_{1}), (n_{2},m_{2}))$.  The axes are circles corresponding to the boundary of the tiles determined by $((n_{1},m_{1}), (n_{2},m_{2}))$ in $T_{A}$ and $T_{B}$ respectively.  Here we choose parameterizations where the vertices lie along the quarters of the circles.  The black vertical circles correspond to when $p$ in the point pair $(p,q) \in T^{2}$ is a vertex whereas the red horizontal circles correspond to when $q$ is a vertex.  We must remove both from $\fC$ for every pair of integer pairs $((n_{1},m_{1}), (n_{2},m_{2}))$.  For a single pair of integer pairs, the union of these circles is visibly a measure zero set.  
}\label{fig:torusvert}
\end{center}
\end{figure}

One can think of these configurations which begin at vertices as the `time zero bad set.'  We now wish to remove the `time one bad set.'  We focus our attention on $P$, the preferred unit square whose interior is to the left of the fixed oriented edge, and consider the vertex $p \in \del P$ determined by the initial point of this edge. We aim to show that all configurations of $\fC$ which reach $p$ after a single iteration form a measure zero subset.  To this end, at the point $p$ draw the four lines through $p$ parallel to directions in $\fD_{T_{B}}$.  By convexity and transversality, these line segments will intersect the $4$-tiles through $p$ at $4$ different points which we label $p_{i}$.  These points of intersection $p_{i}$ determine at most $4$ new tiles in $T_{A}$.  We now remove all configurations which begin at $p_{i}$ in one of these new tiles.   
Fix the integer pair in $\Z^{2}$ determining this tile in $T_{A}$, and consider any other tile of $T_{B}$.  By removing all configurations which begin at $p_{i}$ at this tile pair, we remove yet another circle through a torus which is a measure zero set.  This is illustrated in Figure \ref{fig:removepi} below.  We remark removing the entire circle is done in excess, and we only need to remove the quarter of the circle corresponding to the edge of the $T_{B}$ tile which sends $p_{i}$ to a vertex.  We repeat this process for each of the at most four new tiles.  
\\
\begin{figure}
\begin{center}
\begin{tikzpicture}[scale=1.5]

	\def\side{4}

  	\fill[blue!20] (0,0) rectangle (\side,\side);

  	\draw[thick] (0,0) rectangle (\side,\side);
	
	\draw[line width=1pt, -] (\side/4,0) -- (\side/4,\side);
	\draw[line width=1pt, -] (2*\side/4,0) -- (2*\side/4,\side);
	\draw[line width=1pt, -] (3*\side/4,0) -- (3*\side/4,\side);
	\draw[line width=1pt, -] (\side,0) -- (\side,\side);
	\draw[line width=1pt, -] (0,\side) -- (0,0);
	
	\draw[line width=1pt,red, -] (0,\side/4) -- (\side,\side/4);
	\draw[line width=1pt,red, -] (0,2*\side/4) -- (\side,2*\side/4);
	\draw[line width=1pt,red, -] (0,3*\side/4) -- (\side,3*\side/4);
	\draw[line width=1pt,red, -] (0,0) -- (\side,0);
	\draw[line width=1pt,red, -] (\side,\side) -- (0,\side);
	
	\draw[line width=1pt, green, -] (3*\side/5,0) -- (3*\side/5,\side);
	
	\node[xshift=-20pt, yshift=-20pt] at (0,0) {$((n_{1},m_{1}), (n_{2},m_{2}))$};
	\node[xshift=20pt, yshift=-20pt] at (\side,0) {$S^{1}$};
	\node[xshift=-20pt, yshift=+20pt] at (0,\side) {$S^{1}$};
	
	\node[xshift=0pt, yshift=-20pt] at (3*\side/5,0)  {$p_{i}$};

\end{tikzpicture}
\caption{For every fixed pair of integer pairs, we remove the line corresponding to $p_{i}$ which we illustrate in green.  
}\label{fig:removepi}
\end{center}
\end{figure}

In general the collection of such $p_{i}$ is countable, as the edge set of our tilings is countable.  Nevertheless, a countable collection of circles in $T^{2}$ is still a measure zero set, and taking the union over all $\Z^{4}$ is still measure zero in $\fC$.  Thus the collection of all points which go through $p$ after a single iteration is measure zero in $\fC$.  Inductively proceed in this fashion to remove all configurations which reach $p$ in finitely many iterations, which is a countable union of measure zero sets, and hence measure zero.  As the vertex $p$ was arbitrary, this is true for any vertex of $T_{A}$, and thus the set of configurations which reach a vertex of $T_{A}$ in finitely many iterations has measure zero.  The same holds for $T_{B}$.  Hence the collection of configurations of $\fC$ which are defined for all time has full measure.  
\end{proof}


\section{Affine Geometry and Affine Tilings of the Plane}\label{sec:cat}
In this section we provide background on planar affine geometry and affine structures on surfaces.  We review notions of completeness of an affine surface, affine markings of the torus, and give an account of Baues and Goldman's classification of complete affine structures on the torus via their \emph{period parametrization} with particular attention given to the geometric invariants as in Lemma \ref{lem:caninvs}.  We define what we mean by a \emph{marked complete affine tiling} in Definition \ref{def:mark_tile} which is the context we carry out the dynamical system as in Section \ref{sec:divorbs}.  We show that the space of marked complete affine tilings forms a subspace $\MCQ$ of the space of marked complete affine structures on the torus $\CDT$, and show it is cut out by algebraic inequalities in the period parameters.  We conclude this section with Theorem \ref{thm:goodtiles} which proves we can realize any pair of elements of $\MCQ$ as tilings which are in some sense strongly transverse.  
\subsection{Complete Affine Tori}\label{ssec:afgeo}
We begin by recalling some basics of planar affine geometry.  Let $\A^{2}$ be the collection of all $2$-tuples of real numbers equipped with the simply transitive action of $\R^{2}$ on $\A^{2}$ by translations.  Specifically, a vector $v \in \R^{2}$ acts on a point $p \in \A^{2}$ by $vp:= p + v$ under the usual addition of tuples.  We simply write $p+v$ instead of $vp$.  The philosophy of affine geometry is to free up the origin and let every point be indistinguishable from the rest.  The typical mantra is an affine plane is a vector space where we forget the origin, or in other words, the affine plane is an $\R^{2}$-\emph{torsor}.  While we cannot naturally add points in $\A^{2}$, we can canonically subtract them to get a unique vector in $\R^{2}$.  Denote by $q-p \in \R^{2}$ the unique translation vector $v \in \R^{2}$ for which $p + v = q$.  \\
\\
We define an \emph{affine automorphism} $f: \A^{2} \lra \A^{2}$ to be a bijection which respects the $\R^{2}$-action on $\A^{2}$.  Precisely, an affine automorphism is a bijection between the affine plane for which there is a linear automorphism $L(f): \R^{2} \lra \R^{2}$ so that $f(p+v) = f(p) + L(f)(v)$ for all $p \in \A^{2}$ and $v \in \R^{2}$.  We call $L(f)$ the \emph{linear part} of $f$.  \\
\\
By picking an origin $O \in \A^{2}$, we may identify $\A^{2}$ with $\R^{2}$ by associating every point $p \in \A^{2}$ the vector $v := p-O \in \R^{2}$.  We then define the \emph{translational part} of $f$ by $T(f): = f(O)-O$.  The translational part, unlike the linear part, is dependent on the choice of origin $O$.  The linear and translational part of $f$ then determine the affine transformation, as we have for every $v \in \R^{2}$, $f(O + v) = L(f)(v) + T(f)$.  With the choice of an origin, we may identify the group of all affine automorphisms of $\A^{2}$ with the subgroup of $\GL(3,\R)$ given below.
\begin{equation}\label{eq:lin_tran}
\Aff(2,\R) = \bigg\{ 
\left(
\begin{array}{cc}
A & b \\
0 & 1
\end{array}
\right) \in \GL(3,\R) \, \bigg| \, A \in \GL(2,\R) \text{ and } b \in \R^{2} \bigg\}
\end{equation}
The matrix $A$ and the vector $b$ in the above equation are the linear and translational parts of the affine transformation respectively relative to an affine frame, i.e. a choice of origin $O$ and a basis for $T_{O}\A^{2} \simeq \R^{2}$.  As groups we have $\Aff(2,\R) = \GL(2,\R) \ltimes \R^{2}$ where $\GL(2,\R)$ acts on $\R^{2}$ by the birth-certificate representation.  Frequently we will denote an affine transformation by $(A,b)$, and its action on a vector $v \in \R^{2}$ is given by $Av + b$.  This turns $\R^{2}$ into a model space for affine geometry in the sense of $(G,X)$-structures where $G = \Aff(2,\R)$ and $X = \A^{2}$.  This geometry lies in between Euclidean and projective and is the geometry of \emph{parallelism}.  A bit more precisely, the standard torsion-free flat connection $\nabla$ is invariant under $\Aff(2,\R)$.  Consequently, parallel lines are sent to parallel lines, and a bit more generally, geodesics are sent to geodesics.  Here we emphasize in the absence of an affinely invariant Riemannian metric, we are still able to define geodesics as zero acceleration curves in $\R^{2}$ under the connection $\nabla$.  Locally all such geodesics look like $p + tv$ where $p, v \in \R^{2}$ and $t$ is the time parameter.  Note the form of such a curve is invariant under the affine group.  Other invariants such as collinearity, barycenters, convexity, polynomials, and more are also preserved under $\Aff(2,\R)$.   \\
\\
We will exclusively restrict our attention to the subgroup of orientation preserving affine transformations, $\Affp(2,\R) = \GLp(2,\R) \ltimes \R^{2}$ and we equip $\R^{2}$ with the typical counter-clockwise orientation.  The orientation class defined by positive scales of the unique parallel area form given by the standard determinant is invariant under this group.  We wish to locally model the geometry of $(\Affp(2,\R),\R^{2})$ on oriented surfaces.  With these definitions we are able to define an affine structure on an oriented surface.  Note that our definition is stronger than the typical one as we insist our affine structure is oriented, and, the orientation of the surface is compatible with the orientation of $\R^{2}$ via the defining geometric charts.  We do this largely to avoid having to include the word orientation-preserving in hypotheses and statements of theorems.   

\begin{definition}\label{def:affstr}
An \emph{affine structure} on a closed oriented surface $M$ is a maximal atlas of \emph{orientation preserving charts} $\phi_{\alpha} : U_{\alpha} \lra \R^{2}$ of $M$ where on each connected component of $C \subset U \cap V$, we can find an affine automorphism $g_{V,U} \in \Affp(2,\R)$ for which $\phi_{V} \circ \phi_{U}^{-1}|_{C} : \phi_{U}(C) \lra \phi_{V}(C)$ equals $g_{V,U}|_{C}$.  We say a diffeomorphism $f: M \lra N$ is an \emph{affine diffeomorphism} between affine surfaces if locally in charts $\phi: U \subset M \lra \R^{2}$ and $\psi: V \subset N \lra \R^{2}$, $\psi\circ f \circ \phi^{-1}$ is the restriction of an element of $\Affp(2,\R)$.  
\end{definition}

Having such a structure on an oriented surface is topologically restrictive.  By the work of Benz\'{e}cri, it necessitates $\chi(M) = 0$ so $M$ is homeomorphic to the torus $T^{2}$ \cite{Benzecri1959}.  The affine structure on $T^{2}$ allows us to locally model the geometry of $(\Affp(2,\R),\R^{2})$ on $T^{2}$.  To an observer living on the torus, they would be unable to distinguish small neighborhoods of the torus and the affine plane.  Most importantly, the torus comes equipped with a torsion-free flat connection $\nabla$ given by pulling back the standard connection $\nabla$ on $\R^{2}$ through the atlas charts $\phi_{U}: U \lra \R^{2}$ defining the affine structure.  The fact that the coordinate changes are locally given by affine automorphisms means that the pulled-back connection is locally well-defined and thus extends to all of the torus.  As a consequence, this means we can define geodesics on the affine torus.  For our purposes we will largely be interested in the case where the torus is \emph{complete}, i.e. the geodesics can be extended indefinitely both forwards and backwards in time.  \\
\\
Given two affine structures on $T^{2}$, it is of natural interest to ask when the affine structures are the `same.'  To make this notion precise, we introduce some terminology which is standard in the literature, but included for the sake of self-containment.  Define an \emph{affine marking of} $T^{2}$ as an orientation-preserving diffeomorphism $f: T^{2} \lra M$ where $M$ is an affine torus.  Two affine markings of $T^{2}$, $(f,M)$ and $(g,N)$, are said to be \emph{equivalent} if there exists an affine diffeomorphism $\phi: M \lra N$ which is isotopic to $g\circ f^{-1} : M \lra N$.  In other words, the diagram below commutes up to isotopy.  
\begin{equation*}\label{eq:markedeq}
\begin{tikzcd}
    T^{2} \arrow[dr, "g", swap] \arrow[r, "f"', swap] & M \arrow[d, "\phi"', swap ]  \\
     & N
\end{tikzcd}
\end{equation*}
The space of all equivalence classes of affine markings of the torus can be given a topology, which should be thought of as the affine analogue of equivalence classes of marked conformally flat structures on the torus [\S 6 Classification] \cite{Goldman2022}.  We warn the reader that in the affine context, this space is non-Hausdorff as seen by the works of Nagano-Yagi and Arrowsmith-Furness \cite{NaganoYagi1974}, \cite{ArrowsmithFurness1975}, \cite{ArrowsmithFurness1976}.  If however, we restrict our attention to the space of \emph{complete} affine markings on the torus, which we denote by $\CDT$, then this space is homeomorphic to $\R^{2}$ as was shown in Baues and Baues-Goldman \cite{Baues1999Gluing} \cite{BauesGoldman2005}.  Throughout this paper, we refer to $\CDT$ as \emph{deformation space}.  More recently, another parametrization of $\CDT$ was given through the Veronese embedding that shows this affine deformation space admits a singular smooth structure where the singular point of this parametrization corresponds to the equivalence class of marked Euclidean structures on the torus \cite{Goldman2025AffineTwistedCubic}.   \\
\\
We will largely work with a parametrization given by the `period map' defined later in Equation \ref{eq:per_pair}.  The typical means of parametrizing equivalent marked conformally flat structures on the torus yield coordinates on the upper-half plane and are defined by periods of a holomorphic one-form relative to a fixed basis of the fundamental group of the torus.  In our case, the periods of a canonical \emph{parallel} one-form define coordinates on $\CDT$.  We note in $\CDT$, the entire space of marked conformally flat structures collapses to a single point, as all parallelogram tilings are affinely equivalent.  We refer to this point as the \emph{Euclidean marking} and, under the period parametrization, it corresponds to the origin in $\R^{2}$ under the period map.  We refer to all other points in $\CDT$ as \emph{non-Euclidean}.  
\\
\\
To describe the parametrization in detail, we provide a brief account of the correspondence between marked complete affine structures on the torus and certain representations of its fundamental group into $\Affp(2,\R)$ up to conjugation which act properly discontinuously on $\R^{2}$.  We refer the reader to Sections 5.2 of Goldman for details \cite{Goldman2022}.  Fix a base point $x_{0} \in T^{2}$.  To every complete affine marking $f: T^{2} \lra M$, we can pull back the affine charts of $M$ to obtain a complete affine structure on $T^{2}$.  We then obtain a faithful representation $\rho: \pi_{1}(T^{2},x_{0}) \lra \Affp(2,\R)$ so that the image acts properly discontinuously on $\R^{2}$ by affine transformations.  This representation is the \emph{holonomy} representation.  Its definition depends on a choice of an initial affine chart about $x_{0} \in T^{2}$.  A different choice of an affine chart about $x_{0}$ changes the representation by conjugation, and thus we instead consider the holonomy representation up to conjugation.   \\
\\
If we have two equivalent complete affine markings $f: T^{2} \lra M$ and $g: T^{2} \lra N$, then by definition $g\circ f^{-1}$ is isotopic to an affine automorphism $\phi: M \lra N$.  Let us denote by $T^{2}_{M}$ and $T^{2}_{N}$ the affine structures on $T^{2}$ pulled back by $f$ and $g$ respectively.  Then by definition the composition $g^{-1}\circ \phi \circ f: T^{2}_{M} \lra T^{2}_{N}$ is an affine automorphism isotopic to the identity.  By the development theorem, this means the holonomy representations $\rho_{M}, \rho_{N}$ from $\pi_{1}(T^{2},x_{0})$ to $\Affp(2,\R)$ are equal up to conjugation.  Hence the map associating an equivalence class of complete affine markings to its holonomy representation up to conjugation is well-defined.  This map is a homeomorphism onto a component of the space of representations of $\Z^{2}$ into $\Affp(2,\R)$ up to conjugation, and, the homeomorphism type is $\R^{2}$ [Proposition 2.1 \& Theorem 5.2] \cite{BauesGoldman2005}.  
\\
\\
It therefore follows that a complete affine marking of the torus, up to equivalence, is determined by a single parameter in $\R^{2}$.  The construction Baues-Goldman used to prove this correspondence is explicit and begins with the fact that if $\R^{2}$ acts (non-Euclidean) affinely and properly on $\A^{2}$, then up to conjugation, it is a subgroup of $G_{1} \subset \Affp(2,\R)$ as defined below.
\begin{equation}\label{def:eq:unipot}
G_{1} := \bigg\{
\left(
	\begin{array}{ccc}
		1 & t & s + \frac{1}{2}t^{2} \\
		0 & 1 & t \\
		0 & 0 & 1
	\end{array}
\right) \, \bigg| \, s,t, \in \R
\bigg\} = \exp \bigg\{
\left(
	\begin{array}{ccc}
		0 & t & s  \\
		0 & 0 & t \\
		0 & 0 & 0
	\end{array}
\right) \, \bigg| \, s,t, \in \R
\bigg\} 
\end{equation}
Historically, this appears to have been known as early as the mid-1900's by Kuiper, though its proof has been generalized by other authors \cite{Kuiper1953}, \cite{ArrowsmithFurness1976}, \cite{NaganoYagi1974}, \cite{FriedGoldman1983}.  The work of Mal'cev shows that every proper affine action of $\Z^{2}$ on $\A^{2}$ must extend to a simply-transitive one on all of $\R^{2}$, and so we may assume the image of the holonomy representation of a complete affine torus lies somewhere inside of the group in Equation \ref{def:eq:unipot} \cite{Malcev1951}.  A quick inspection of $G_{1}$ yields the invariant parallel vector field generated by $\del_{x}$, its complementary invariant parallel one-form $dy$, and, the invariant parallel area form $dx\wedge dy$.  These invariants descend to the complete affine torus in question, and conventions can be imposed so that all these invariants are canonical.  This is summarized in Baues-Goldman which we cite below without proof [Lemma 4.1] \cite{BauesGoldman2005}.
\begin{lemma}\label{lem:canform}
Let $M$ be a complete non-Euclidean affine torus with connection $\nabla$.  There exists a parallel area form $d\Omega_{M}$, a parallel vector field $X_{M}$, a parallel one-form $\omega_{M}$, and a polynomial vector field $Y_{M}$ satisfying the conditions below.
\begin{equation*}
\int_{M} d\Omega_{M} = 1, \phantom{=} X_{M} = \nabla_{Y_{M}} Y_{M}, \phantom{=} d\Omega_{M}(X_{M},Y_{M}) = 1, \phantom{=} \omega_{M} = \iota_{X_{M}}(d\Omega_{M})
\end{equation*}
Here $\iota$ denotes interior multiplication.  The area form, vector field, and one-form are all unique, whereas the polynomial vector field is unique up to addition of a constant multiple of $X_{M}$.  
\end{lemma}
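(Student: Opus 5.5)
To prove Lemma \ref{lem:canform}, I would pass to the universal cover and work with the holonomy group. By completeness and the Mal'cev/Kuiper discussion preceding the statement, we may assume the developing map is a diffeomorphism $\widetilde{M}\to\A^{2}$ and that, after conjugation, the holonomy image $\Gamma=\rho(\pi_{1}T^{2})\cong\Z^{2}$ is a lattice in $G_{1}$. Since $M$ is non-Euclidean, $\Gamma$ does not consist of translations. Parallel tensors on $M$ then correspond exactly to constant-coefficient tensors on $\R^{2}$ that are invariant under the linear parts of $\Gamma$. Polynomial vector fields on $M$ correspond to $\Gamma$-invariant polynomial vector fields on $\R^{2}$. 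So the whole argument reduces to linear algebra for the unipotent linear parts $\left(\begin{smallmatrix}1&t\\0&1\end{smallmatrix}\right)$, where not all $t$ vanish.

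The first step is existence and uniqueness of $X_{M}$ and $d\Omega_{M}$ up to scale. A constant vector field $v$ is fixed by a nontrivial unipotent $\left(\begin{smallmatrix}1&t\\0&1\end{smallmatrix}\right)$ exactly when $v\in\R\,\del_{x}$. A constant one-form fixed by its transpose action lies in $\R\,dy$. Every constant area form is invariant, since the determinant is $1$. So parallel vector fields, one-forms and area forms each form one-dimensional spaces, spanned by $\del_{x}$, $dy$ and $dx\wedge dy$. The normalization $\int_{M}d\Omega_{M}=1$ fixes the area form uniquely: integrate over a fundamental domain, whose area is finite and positive because the action is proper and cocompact. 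The requirement $\omega_{M}=\iota_{X_{M}}d\Omega_{M}$ then ties the scale of $\omega_{M}$ to that of $X_{M}$.

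The second step is to construct $Y_{M}$ and pin down the scale of $X_{M}$. I would take $Y=y\,\del_{x}+\del_{y}$, which is $G_{1}$-invariant. To check this, note that $G_{1}$ is the flow group of the commuting pair $\del_{x}$ and $y\del_{x}+\del_{y}$; this is visible from the exponential form in \eqref{def:eq:unipot}, whose generator acts as $t(y\del_{x}+\del_{y})+s\,\del_{x}$. Alternatively, one can verify directly that pushing forward by $(x,y)\mapsto(x+ty+s+\tfrac12t^{2},\,y+t)$ preserves $Y$. A short computation then gives $\nabla_{Y}Y=\del_{x}$, since $Y(y)=1$. With $X=\lambda\del_{x}$ and $d\Omega=c\,dx\wedge dy$, I would look for $Y$ of the form $aY_{0}+b\,\del_{x}$, where $Y_{0}=y\del_{x}+\del_{y}$. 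This gives $\nabla_{Y}Y=a^{2}\del_{x}$ and $d\Omega(X,Y)=c\lambda a$. The conditions $a^{2}=\lambda$ and $c\lambda a=1$ then determine $a$ and $\lambda$ uniquely from $c$, with $a>0$ forced by requiring $\lambda=a^{2}>0$. For uniqueness of $Y$, I would show that the invariant polynomial vector fields satisfying these equations are exactly $aY_{0}+b\del_{x}$. Invariance under the whole lattice $\Gamma$ forces invariance under its Zariski closure, which is all of $G_{1}$ because $\Gamma$ is a non-translational lattice. The centralizer of the Lie algebra of $G_{1}$ among polynomial vector fields is spanned by $\del_{x}$ and $Y_{0}$. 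Fixing $a$ leaves only the freedom $Y\mapsto Y+b\,X_{M}$, as claimed.

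I expect the main obstacle to be the step from $\Gamma$-invariance to $G_{1}$-invariance for polynomial fields, together with computing the centralizer. I would handle this by Zariski density: a polynomial identity in the group parameters $(s,t)$ that holds on a lattice in $\R^{2}$ holds everywhere. The centralizer can then be computed degree by degree from $[\del_{x},Z]=0$ and $[Y_{0},Z]=0$. Finally, I would check that all constructions are independent of the conjugation chosen, so that they descend canonically to $M$.
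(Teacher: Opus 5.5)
Your argument is correct, and it does not follow the paper, which proves nothing here. The paper quotes the lemma from Baues--Goldman \cite{BauesGoldman2005}. Its only in-text support is the explicit model for $\rho_{k}$ in Equations \ref{eq:polyvec}--\ref{eq:parone}, and that ``routine'' verification covers existence only. In that model the normalizations come for free. $\Psi_{k}$ is area preserving and $\Z^{2}$ has a unit-area fundamental domain, so $d\Omega_{k}=dx\wedge dy$. Then $Y_{k}=k/|k|^{2}+(k\cdot p)X_{k}$ visibly gives $\nabla_{Y_{k}}Y_{k}=(k\cdot Y_{k})X_{k}=X_{k}$ and $\det(X_{k},Y_{k})=1$. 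The period parameter also appears directly as $\omega_{k}=k_{1}dx+k_{2}dy$, which is exactly what the period map needs.

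You instead work in the $G_{1}$ normal form, where the fundamental domain has arbitrary area and you must solve $ca^{3}=1$. In exchange you actually prove the uniqueness clauses. Polynomiality is what lets Zariski density of the lattice in $G_{1}\cong\R^{2}$ upgrade $\Gamma$-invariance to $G_{1}$-invariance; for merely smooth fields this step fails. The centralizer computation then shows that every $\Gamma$-invariant polynomial vector field has the form $aY_{0}+b\,\del_{x}$. This uniqueness is what the paper tacitly uses when it calls these objects canonical and defines the period pair through $\omega_{M}$.

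Two cosmetic points:
\begin{itemize}
\item $a>0$ follows from $ca^{3}=1$ with $c>0$, since the real cube root is unique; it does not come from requiring $\lambda>0$.
\item The final check that everything is independent of the conjugation is automatic. The defining conditions are intrinsic to $M$, and you have established existence and uniqueness in one developing chart.
\end{itemize}
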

The above lemma gives us canonical geometric objects on our complete affine torus which we will use repeatedly throughout this paper.  However, we emphasize that these objects are defined on the complete affine torus and \emph{not} the marking.  In fact, for a fixed complete affine torus $M$, we use the parallel one-form $\omega_{M}$ to define the \emph{period parametrization} of a marking $f: T^{2} \lra M$.  Following Baues-Goldman, let $f: T^{2} \lra M$ be a marked non-Euclidean complete affine torus and $\omega_{M}$ the canonical $1$-form.  We let $e_{1},e_{2} \in \Z^{2} \simeq \pi_{1}(T^{2},x_{0})$ denote the standard basis for the fundamental group of $T^{2}$, and define the \emph{period pair} as below.
\begin{equation}\label{eq:per_pair}
p(f,M) := \left(\int_{e_{1}} f^{*}\omega_{M}, \int_{e_{2}} f^{*}\omega_{M} \right)
\end{equation}
For the Euclidean torus, we define $p(f,M) = 0$.  One of the main results from Baues-Goldman is that this map $p: \CDT \lra \R^{2}$ is a homeomorphism [Theorem 5.2] \cite{BauesGoldman2005}.  Moreover, this homeomorphism $p$ shows the mapping class group action of $\MCG(T^{2}) \simeq \SL(2,\Z)$ on $\CDT$ to be equivalent to the $\SL(2,\Z)$ action on $\R^{2}$ which is ergodic [Example 2.2.9] \cite{Zimmer1984}.  This is in stark contrast to the classical case where the mapping class group action on marked conformally flat structures on the torus is proper.  With the definition of the period parametrization, we provide some quick insights into their proof that it is a homeomorphism.  
\\
\\
Fix an identification of the universal cover of $T^{2}$ with $\R^{2}$ and identify $\pi_{1}(T^{2},0)$ with the integer lattice $\Z^{2}$ sitting inside $\R^{2}$ and denote elements of $\pi_{1}(T^{2},0) \simeq \Z^{2}$ by $\gamma = (n,m) \in \Z^{2}$.  For each $k = (k_{1},k_{2}) \in \R^{2}$, define $k^{\perp} = (-k_{2},k_{1})$ so that $(k,k^{\perp})$ is an oriented orthogonal basis of $\R^{2}$ relative to the standard orientation and denote by $x\cdot y$ the standard dot product.  To each $k \in \R^{2}$, we assign the representation $\rho_{k}: \Z^{2} \lra \Affp(2,\R)$, in terms of linear and translational parts as in Equation \ref{eq:lin_tran}, which acts properly discontinuously on $\R^{2}$ in the equation below.
\begin{equation}\label{eq:hol_rep}
L_{k}(\gamma)(v) := v - (k\cdot \gamma)(k \cdot v)k^{\perp} \text{ and } T_{k}(\gamma) := \gamma - \frac{1}{2}(k \cdot \gamma)^{2}k^{\perp}
\end{equation}
If we fix the standard oriented frame $(0,(e_{1},e_{2}))$ of $\R^{2}$, then the representation $\rho_{k} : \Z^{2} \lra \Affp(2,\R)$ is given below.
\begin{equation}\label{eq:coords_hol_rep}
\left (
  \begin {array} {ccc}
                    1+k_ {1} k_ {2} (k\cdot \gamma)  & k_ {2}^2 (k\cdot \gamma) & n+\frac {1} {2} k_ {2} (k\cdot \gamma)^2  \\
                  - k_ {1}^2 (k\cdot \gamma)& 1 -  k_ {1} k_ {2} (k\cdot \gamma) & m - \frac {1} {2} k_ {1}(k\cdot \gamma)^2  \\
                  0 & 0 & 1 
    \end {array}
   \right)
\end{equation}
We remark that this is the holonomy representation of the \emph{developing map} given by $\Psi_{k}: \R^{2} \lra \R^{2}$ via $\Psi_{k}(p) = p - \frac{1}{2}(k\cdot p)^{2}k^{\perp}$.  The developing map determines our marking by fixing a universal cover $\R^{2}$ of $T^{2}$ with the usual $\Z^{2}$-action by unit translations on it, and, the map satisfies the equivariance condition that $\Psi_{k}(p+\gamma) = \rho_{k}(\gamma)\Psi_{k}(p)$ for all $p \in \R^{2}$ and $\gamma \in \Z^{2}$.  This induces a marking with our fixed topological surface $T^{2} = \R^{2}/\Z^{2}$ with $T_{k}^{2} := \R^{2}/\rho_{k}(\Z^{2})$ via the developing map which takes $0 \in T^{2}$ to $0 \in T_{k}^{2}$.      
\\
\\ 
By Lemma \ref{lem:canform} for each $k \neq 0$, we obtain geometric invariants on $T_{k}^{2} := \R^{2}/\rho_{k}(\Z^{2})$.  One can readily check that the developing map $\Psi_{k}$ is both area and orientation preserving.  If we let $d\Omega_{k}$ be the unique invariant parallel area form on $T_{k}^{2}$, then the area of the affine torus is equal to $1$.  Because the developing map $\Psi_{k}$ is area preserving, we have the equalities below.
\begin{equation*}
1 = \int_{T^{2}} dx \wedge dy = \int_{T^{2}} \Psi_{k}^{*}d\Omega_{k} = \int_{T_{k}^{2}} d\Omega_{k} 
\end{equation*}
Thus, the parallel area form on $T_{k}^{2}$ is induced by $dx \wedge dy$.  The polynomial vector field is given below where we interpret $p \in \R^{2}$ as a vector.   
\begin{equation}\label{eq:polyvec}
Y_{k}|_{p} = \frac{1}{|k|^{2}}\left(k_{1}\del_{x} + k_{2}\del_{y}\right)|_{p} + (k\cdot p)\left(k_{2}\del_{x} - k_{1}\del_{y}\right)|_{p}
\end{equation}
With this definition, it follows that the parallel vector field is given by 
\begin{equation}\label{eq:parvec}
X_{k}|_{p} = \nabla_{Y_{k}} Y_{k}|_{p}= k_{2}\del_{x}|_{p} - k_{1}\del_{y}|_{p}
\end{equation}
Finally, the parallel one-form is given by 
\begin{equation}\label{eq:parone}
\omega_{k}|_{p} = \iota_{X_{k}}(d\Omega_{k})|_{p} = k_{1}dx|_{p} + k_{2}dy|_{p}
\end{equation}
Routine calculations verify these invariants satisfy the conditions of Lemma \ref{lem:canform} and thus induce the claimed differential structures on the affine tori $T_{k}^{2}$.  For $k \neq 0$, the vector field $X_{k}$ in Equation \ref{eq:parvec} integrates to a complete parallel flow $F_{t}: T_{k}^{2} \lra T_{k}^{2}$.  The flow is easily defined on the model space universal cover $\tilde{F}_{t}: \R^{2} \lra \R^{2}$ and is given by translation by the vector $tX_{k}$ for $t \in \R$.  Thus the flow lines yield a parallel oriented foliation $\fF_{k}$ of $\R^{2}$ invariant under the holonomy which descends to a parallel oriented foliation of $T_{k}^{2}$.  We choose leaves to be oriented positively along the direction of $X_{k}$ and negatively oriented along $-X_{k}$.  See Figure \ref{fig:foliate} below for an example.  
\\
\begin{figure}
\includegraphics[scale=0.5]{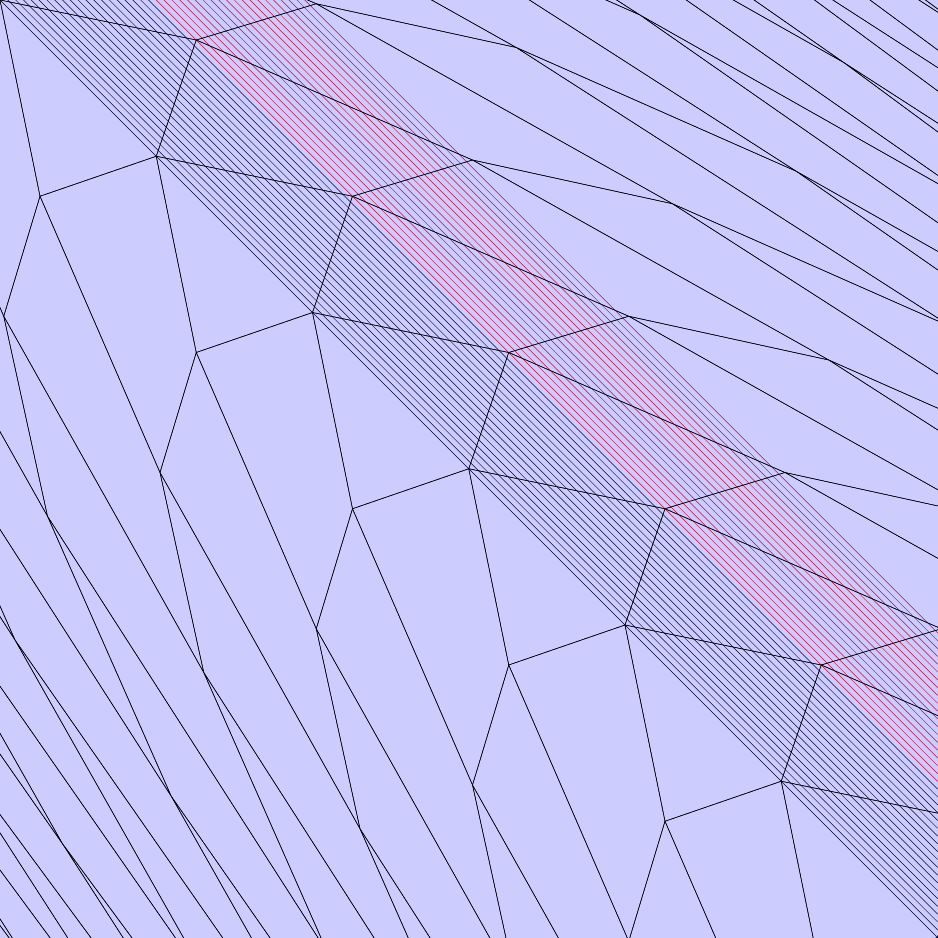}
\caption{A tiling of the marked torus whose period parameters are $k = (8/10,8/10)$.  The flow lines are directed in the direction $[(1,-1)] \in \fD$.  The parameter $k_{1}$ may be thought of as the weight of this oriented foliation across the edge through the black flow lines, and the parameter $k_{2}$ may be thought of analogously for the red flow lines.  Recall the tiles are oriented counter-clockwise, and one can see that the vectors representing the edges of a tile pair positively with this foliation.  In addition, they determine the same weight of the foliation for this example.  
}\label{fig:foliate}
\end{figure}

We remark that for each $k \neq 0$, while $T_{k}^{2}$ admits a preferred parallel vector field $X_{k}$, it does not admit a natural complementary parallel vector field.  Its preferred complement $Y_{k}$ is non-parallel, and the absence of this canonical choice is a consequence of the linear holonomy of $\rho_{k}$, denoted $L_{k}$, taking its image inside a unipotent subgroup of $\SL(2,\R)$.  We now investigate the dynamics of this action on the circle of directions.  
\\
\\
To this end, choose a $k \neq 0$ and let us restrict our attention to the representation $L_{k} : \Z^{2} \lra \SL(2,\R)$ given by the linear part of the holonomy as in Equation \ref{eq:hol_rep}.  The vector fields $\pm X_{k}$ determine two invariant directions in $\fD$.  These fixed points $[\pm X_{k}]$ are unstable equilibria of the $\Z^{2}$-action on $\fD$, and the parallel one-form $\omega_{k}$ in Equation \ref{eq:parone} partitions $\fD$ into two types of directions which we define below.  Figure \ref{fig:posnegdirections} illustrates this definition.
\begin{definition}\label{def:posneg_dir}
Let $k \neq 0$, and let $\omega_{k} : \R^{2} \lra \R$ be the $\rho_{k}$-invariant one-form as defined in Equation \ref{eq:hol_rep}.  We say a direction $[v] \in \fD$ is $\omega_{k}$-non-negative if $\omega_{k}(v) > 0$, or, $[v] = [X_{k}]$.  Similarly, we define a $\omega_{k}$-non-positive direction $[v] \in \fD$ to be one where $\omega_{k}(v) < 0$, or, $[v] = [-X_{k}]$.  We denote this decomposition by $\fD = \fDnn \sqcup \fDnp$.  We say a direction $[v]$ is positive, negative, or neutral if $\omega_{k}(v)$ is positive, negative, or zero respectively.  We denote these sets as $\fD = \fDpos_{\omega_{k}} \sqcup \fDneg_{\omega_{k}} \sqcup \fDneu_{\omega_{k}}$ respectively.  
\\
\\
We define a loop $0 \neq \gamma \in \Z^{2}$ to be non-positive or non-negative if its corresponding direction $[T_{k}(\gamma)] \in \fD$ is non-positive or non-negative where $T_{k}(\gamma)$ is the translational part of $\rho_{k}(\gamma)$ as defined in Equation \ref{eq:hol_rep}.  Similarly, we say loops are positive, negative, or neutral if $[T_{k}(\gamma)] \in \fD$ is positive, negative, or neutral.  For convenience we define $0 \notin \fD$ to be a neutral loop.  
\end{definition}  

\begin{figure}
\begin{center}
\begin{tikzpicture}[scale=3]

	\def\ang{35}

  	\draw[thick] (0,0) circle (1);
	
  	\draw[line width=4pt, blue]
    		({cos(\ang)},{sin(\ang)})
    		arc[
      			start angle=\ang,
      			end angle=180+\ang,
      			radius=1
    		];

  	\draw[line width=4pt, orange]
    		({cos(180+\ang)},{sin(180+\ang)})
    		arc[
      			start angle=180+\ang,
      			end angle=360+\ang,
      			radius=1
    		];
	
	\node[xshift=-30pt, yshift=+10pt] at ({cos(90+\ang)},{sin(90+\ang)}) {$\omega_{k}([v]) > 0$};
	\node[xshift=30pt, yshift=-10pt] at ({cos(270+\ang)},{sin(270+\ang)}) {$\omega_{k}([v]) < 0$};
		
  	\fill (0,0) circle (0.025);
	
	\fill[blue, draw=black] ({cos(\ang)},{sin(\ang)}) circle (0.03);
	\fill[orange, draw=black] ({cos(180+\ang)},{sin(180+\ang)}) circle (0.03);
	
  	\draw[line width=1pt, ->]
    		(0,0) -- ({cos(\ang)},{sin(\ang)});
		\node[xshift=30pt, yshift=10pt] at ({cos(\ang)},{sin(\ang)}) {$[X_{k}]$};
		
  	\draw[dotted, line width=1pt, ->]
    		(0,0) -- ({cos(180+\ang)},{sin(180+\ang)});
    		\node[xshift=-30pt, yshift=-10pt] at ({cos(180+\ang)},{sin(180+\ang)}) {$[-X_{k}]$};

\end{tikzpicture}

\caption{A figure of the non-negative and non-positive directions of $\fD$ as defined by the one-form $\omega_{k}$.  Here the invariant directions of the linear holonomy $L_{k}$ are depicted by the thick black and dotted black rays $[X_{k}], [-X_{k}]$ respectively.  The blue half open arc with $[X_{k}]$ determines the non-negative directions and the orange half open arc with $[-X_{k}]$ determine the non-positive directions.  
}\label{fig:posnegdirections}
\end{center}
\end{figure}


Note that by definition a direction $[v] \in \fD$ is positive if and only if $\omega_{k}(v) := d\Omega_{k}(X_{k},v) > 0$.  That is to say, $(X_{k},v)$ forms an \emph{oriented basis} for $\R^{2}$.  Because the canonical one-form is holonomy invariant, the $\Z^{2}$-action on the circle of directions $\fD$ preserves the decomposition $\fD = \fDnn\sqcup \fDnp$ and in particular preserves both directions $[\pm X_{k}]$.  An inspection of the linear holonomy as in Equation \ref{eq:hol_rep} shows that positive loops move non-neutral directions clockwise, with the positive loops tending towards $X_{k}$ and the negative loops tending towards $-X_{k}$ whereas negative loops move non-neutral directions counter-clockwise.  Neutral loops act trivially on $\fD$, and all three types of loops preserve the two invariant directions $[\pm X_{k}]$.  Topologically, this action is the double cover of a unipotent subgroup of $\SL(2,\R)$ acting on the boundary of the hyperbolic plane.  \\
\\
Later in Section \ref{sec:divorbs}, we will relate the dynamics of the symplectic tiling billiards to measuring $\omega_{k}$ along billiard trajectories.  In fact, we will use $\omega_{k}$ to measure divergence as in Definition \ref{def:divpath}.  Because $\R^{2}$ is simply-connected, the pairing of $\omega_{k}$ against a path $\alpha: [0,1] \lra \R^{2}$ depends only on the end points of $\alpha$.  In fact, one can readily see that $\int_{\alpha}\omega_{k} = d\Omega_{k}(X_{k},\alpha(1)-\alpha(0))$ and is thus measuring how many (oriented) leaves of the invariant foliation of $\R^{2}$ determined by $X_{k}$ we pass through.  Having $\int_{\alpha} \omega_{k} = 0$ means up to end point fixed homotopy, $\alpha$ is parallel to $\pm X_{k}$.  Thus the parallel one-form $\omega_{k}$ in Equation \ref{eq:parone} is complementary to the flow $F_{t}: T_{k}^{2} \lra T_{k}^{2}$ as the flow lines integrate to $0$ on $\omega_{k}$.   \\
\\
There is one particularly nice case where the flow lines of $F_{t}$ close up in uniform time, and in this instance we have an induced $S^{1}$-action on $T_{k}^{2}$.  If we consider the linear holonomy $L_{k}(\gamma)$ as defined in Equation \ref{eq:hol_rep}, we see if $k\cdot \gamma = 0$ for some $0 \neq \gamma \in \Z^{2}$, then $L_{k}(\gamma) = \id$, so $\rho_{k}(\gamma)$ acts by the translation $T_{k}(\gamma) = \gamma$ and thus is parallel to the vector $X_{k}$.  Consequently, the flow lines all close up in uniform time.  Algebraically, that is to say the linear holonomy map $L_{k}: \Z^{2} \lra \SL(2,\R)$ admits a non-trivial kernel.  This kernel is generated by the smallest relatively prime pair of integers $(n,m) = \gamma$ satisfying $\omega_{k}(\gamma) = d\Omega_{k}(X_{k},(n,m)) = 0$.  This occurs precisely when the period parameters $0 \neq k = (k_{1},k_{2})$ as defined in Equation \ref{eq:per_pair} are both rational numbers.  This motivates the concluding definition of this section below.
\begin{definition}\label{def:rattor}
We say a complete non-Euclidean affine torus $M$ is \emph{rational} if it admits a marking $(f,M)$ for which the period parameters $k = (k_{1},k_{2})$ as defined in Equation \ref{eq:per_pair} are both \emph{rational}.  We take the Euclidean torus to be rational and say a non-Euclidean complete affine torus is \emph{irrational} if either of its period parameters are irrational.  
\end{definition}

\subsection{Marked Complete Affine Tilings}\label{ssec:mcat}
We dedicate this section to analyzing the corresponding tilings that arise from the markings of complete affine tori as in Section \ref{ssec:afgeo}.  We wish to have tilings of the plane by convex quadrilaterals whose tiling symmetries are the holonomy representations of markings of complete affine tori.  We insist that these tilings be done by \emph{convex quadrilaterals} with the intention of playing symplectic tiling billiards on them.  To this end, we make precise what we mean by a \emph{marked complete affine tiling}.   
\begin{definition}\label{def:mark_tile}
Let $T$ be a tiling of the plane by convex quadrilaterals.  We say that $T$ is a \emph{complete affine tiling} if one of its tiles is the fundamental domain of a properly discontinuous $\Z^{2}$-action on the plane by symmetries in $\Affp(2,\R)$ where each such symmetry preserves the tiling $T$.  We call a choice of an oriented edge of $\del T$ a \emph{marking} of the tiling and the pair a \emph{marked complete affine tiling}.  We represent the edge by the vector $v_{1} \in \R^{2}$ determined by the difference of the oriented edge's terminal and initial point.  
\end{definition}
We provide the following observations about our definition above.  By the hypothesis that the $\Z^{2}$-action preserves the tiling and the marked tile is a fundamental domain for the action, the $\Z^{2}$-orbit of the tile must see every other tile, and because the action is properly discontinuous, the group must glue opposite pairs of edges of $Q$ to each other.  Consequently, the quotient is a complete affine torus.  A marking of a complete affine tiling determines a representation $\rho_{T}: \Z^{2} \lra \Affp(2,\R)$ in the following manner.  The choice of an oriented edge $v_{1}$ of $\del T$ naturally determines a base-point by picking the initial point of the oriented edge.  We call this base-point the \emph{anchor} of the tile.  Moreover, the marking determines a unique base-tile.  Of the two tiles through $v_{1}$, choose the one whose interior is to the left of $v_{1}$, so that the tile $Q$ is labeled by $4$ oriented edges $v_{1},\hdots, v_{4}$ traversing the tile in a counter-clockwise manner.  See Figure \ref{fig:marked_tile} below.
\\
\begin{figure}
\includegraphics[scale=0.5]{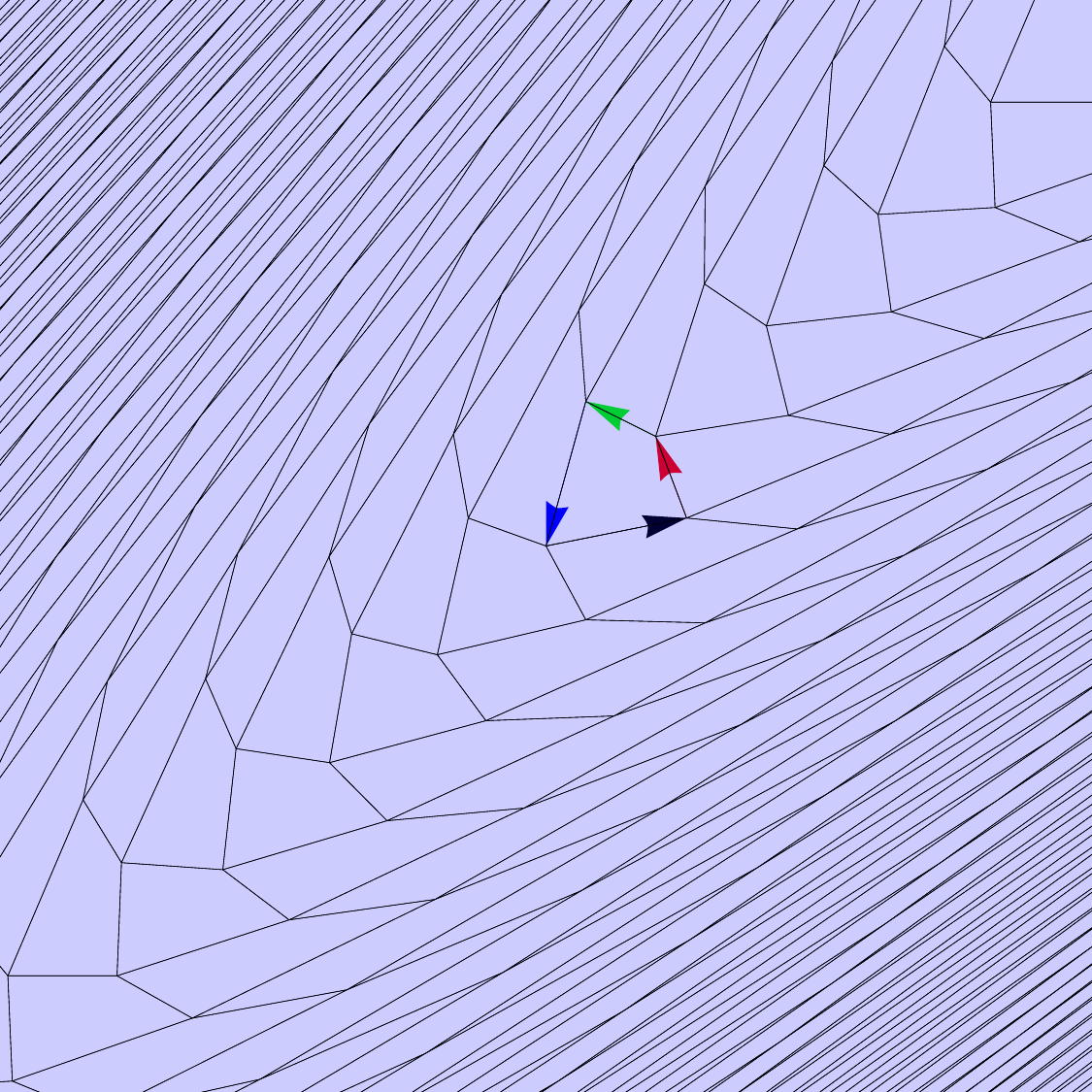}
\caption{The marked edge of this tiling is given by the black arrow, and the marked tile $Q$ is the one decorated with arrows.  The edges $v_{1},v_{2},v_{3},v_{4}$ are labeled by the black, red, green, and blue vectors.  The anchor is the initial point of $v_{1}$.  This tiling corresponds to the torus with period parameters $k = (-8/10,9/10)$.  
}\label{fig:marked_tile}
\end{figure}

We say that $Q$ is the \emph{marked} tile of the tiling $T$ and call the labeling $v_{1}, v_{2}, v_{3}, v_{4}$ the \emph{standard labeling} of the marked tile $Q$.  The standard labeling unambiguously determines the bottom, right, top, and left oriented edges of $Q$ respectively by $v_{1},v_{2},v_{3},v_{4}$.  Such a choice of a marked edge of a single tile then induces a standard labeling on every tile of $T$ which we call the \emph{induced labeling}.  Figure \ref{fig:induced_orient} illustrates the induced labeling and we will always assume for a marked complete affine tiling, all other tiles are labeled according to the induced labeling.  With these conventions established we can provide an associated representation from the tiling. 
\\
\begin{figure}
\includegraphics[scale=0.5]{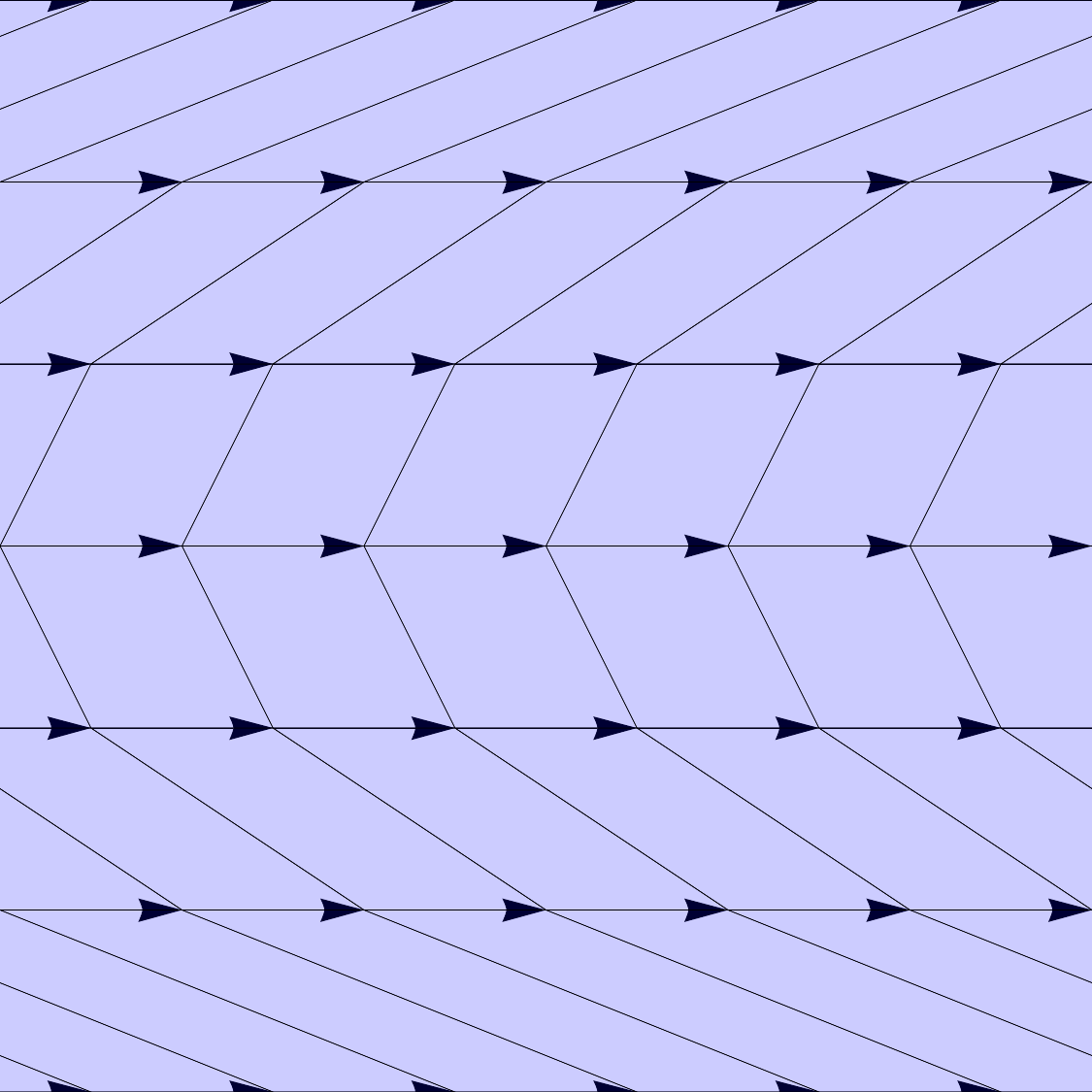}
\caption{A single marked edge of $T$ determines markings of all other edges and every tile inherits a labeling of its boundary by the standard labeling $v_{1},\hdots, v_{4}$.  Here we illustrate all possible $v_{1}$'s for each tile.  
}\label{fig:induced_orient}
\end{figure}
%
\begin{definition}\label{def:mtr}
Let $T$ be a marked complete affine tiling marked by the oriented edge $v_{1}$ of $\del T$ and whose marked tile $Q$ is labeled by the edges $v_1,v_2,v_3,v_4$ under the standard labeling.  We define the \emph{marked tiling representation} $\rho_{T}: \Z^{2} \lra \Affp(2,\R)$ to be the representation taking $e_{1} \in \Z^{2}$ to be the symmetry of the tiling taking edge $v_4$ of $Q$ to edge $-v_2$ of $Q$ and $e_{2} \in \Z^{2}$ to be the symmetry of the tiling taking edge $v_1$ of $Q$ to edge $-v_3$ of $Q$.
\end{definition}
We remark the negative signs included in $v_{2}, v_{3}$ above are necessary because of orientation considerations.  Figure \ref{fig:mtrep} illustrates the definition.  Note had we picked another oriented edge labeled $v_{1}$ from the induced labeling of $Q$, we obtain the same marked tiling representation.  By hypothesis the image of $\rho_{T}$ acts properly discontinuously on $\R^{2}$ and thus determines a point in deformation space, $\CDT$.  Observe that if instead we chose the other orientation of our edge $v_{1}$, we get another representation $\rho_{T'}$.  This representation differs from $\rho_{T}$ by the element of the mapping class group $\MCG(T^{2})$ taking $(n,m)$ to $(-n,-m)$, and generically determines a different point in deformation space, thus we take care to remember the marking of the tiling.  
\\
\begin{figure}
\includegraphics[scale=0.5]{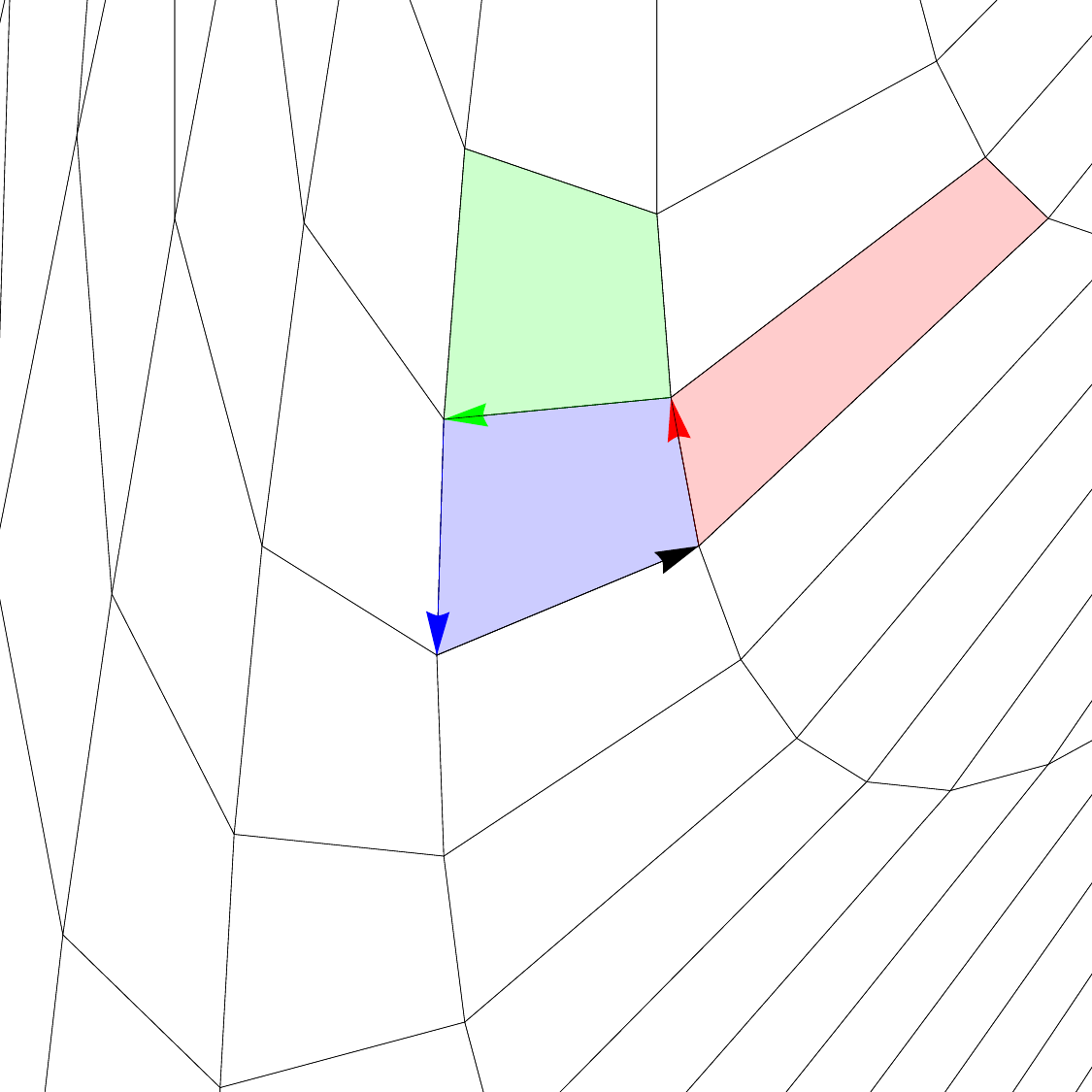}
\caption{The marked tiling representation of the marked complete affine tiling.  The period parameters are given by $k = (-1,4/10)$.  The marked tile is illustrated in $Q$ with $v_{1},\hdots, v_{4}$ drawn in black, red, green, and blue in the standard labeling.  The action $\rho_{T}(e_{1})$ is the tiling symmetry taking the blue tile to the red tile, and the action $\rho_{T}(e_{2})$ is the tiling symmetry taking the blue tile to the green tile.  Equivalently $\rho_{T}(e_{1})$ and $\rho_{T}(e_{2})$ specify the affine symmetries of the tiling moving right and up through the tiling respectively.  
}\label{fig:mtrep}
\end{figure}

Because $\rho_{T}(e_{1})$ and $\rho_{T}(e_{2})$ take the marked tile $Q$ one tile to the right and up respectively, we have a $\Z^{2}$-action on the tiles of $T$.  We will denote by $\gamma Q$ the tile obtained from $\rho_{T}(\gamma)$ applied to $Q$ where $\gamma = (n,m) \in \Z^{2}$.  Figure \ref{fig:moving} illustrates this for the tiling in Figure \ref{fig:mtrep} with the loop $\gamma = (2,3)$.  We will make use of this action later in Section \ref{sec:divorbs} when we analyze the limiting shapes of the quadrilaterals under a sequence of divergent positive loops.  
\\
\begin{figure}
\includegraphics[scale=0.5]{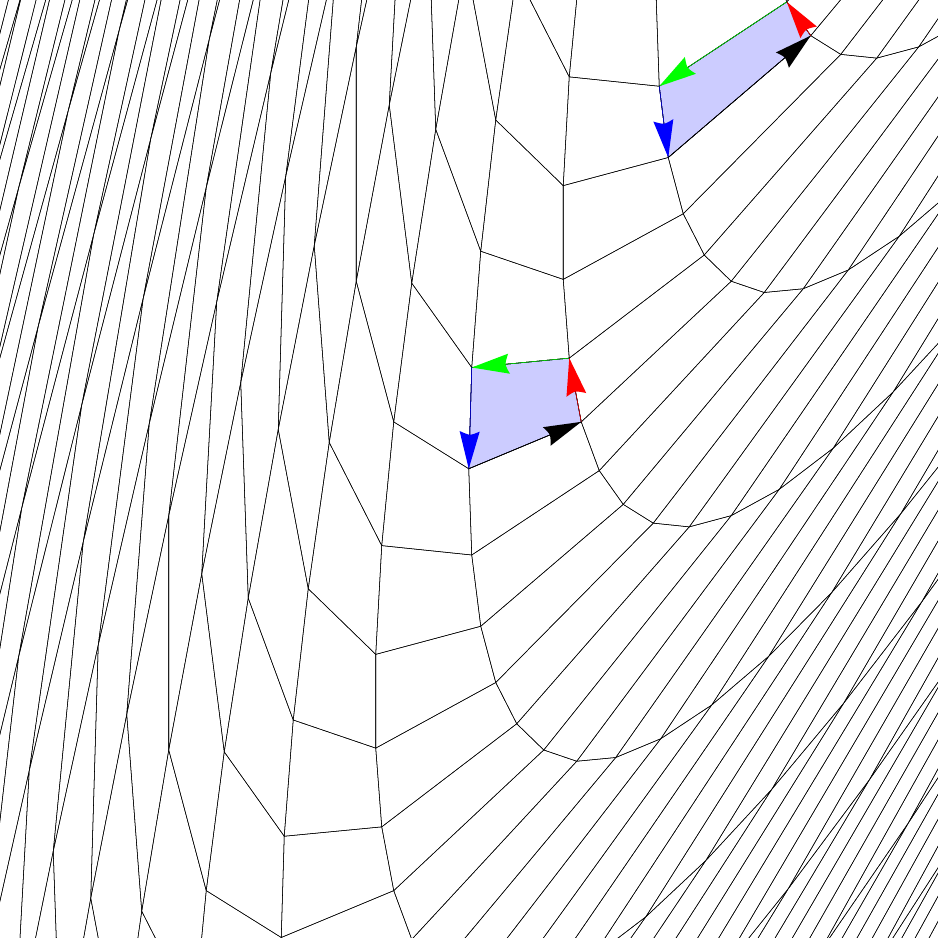}
\caption{The bottom left marked tile $Q$ is taken to the upper right marked tile $\gamma Q$ where $\gamma = (2,3) \in \Z^{2}$.}\label{fig:moving}
\end{figure}

Optimistically, one could hope that for any choice of $k \in \R^{2}$ there is a marked complete affine tiling $T$ so that the marked tiling representation of $T$ and $k$ determine the same point in deformation space $\CDT$.  Interestingly this is not the case and illustrates a subtlety that does not occur in the classical case which is the content of our first theorem below.
\begin{theorem}\label{thm:marked_tiles}
The subspace of $\CDT$ corresponding to the holonomies of marked complete affine tilings is an unbounded open subset cut out by the algebraic inequalities in the period parameters $k = (k_{1},k_{2})$ as in Equation \ref{eq:conv_cond}.  That is to say, the space of equivalence classes of markings of complete affine tori which admit a marked convex quadrilateral fundamental domain $Q$ in $\R^{2}$ where the gluings of pairs of opposite edges, $v_{4}$ to $-v_{2}$ and $v_{1}$ to $-v_{3}$, correspond to $\rho_{k}(e_{1}), \rho_{k}(e_{2}) \in \Affp(2,\R)$ respectively, is an unbounded open algebraic subset of $\CDT$.  
\end{theorem}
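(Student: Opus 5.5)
The plan is to reduce everything to the position of a single vertex. Let $Q$ be a marked convex tile with vertices $a,b,c,d$ in counter-clockwise order, so that $v_{1}=b-a$, $v_{2}=c-b$, $v_{3}=d-c$, $v_{4}=a-d$. By Definition \ref{def:mtr}, $\rho(e_{1})$ takes $v_{4}$ to $-v_{2}$, so $\rho(e_{1})(d)=c$ and $\rho(e_{1})(a)=b$. Likewise $\rho(e_{2})$ takes $v_{1}$ to $-v_{3}$, so $\rho(e_{2})(a)=d$ and $\rho(e_{2})(b)=c$. These relations are consistent because $\rho(e_{1})\rho(e_{2})(a)=c=\rho(e_{2})\rho(e_{1})(a)$. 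Hence $Q$ is completely determined by its anchor $a$:
\[
b=\rho(e_{1})a,\qquad c=\rho(e_{1}+e_{2})a,\qquad d=\rho(e_{2})a .
\]

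\textbf{Normalizing the anchor.} Next I remove the anchor as a free parameter. A point of $\CDT$ is a representation up to conjugation. For $k\neq 0$ the image $\rho_{k}(\Z^{2})$ lies in a simply transitive abelian group (the conjugate of $G_{1}$ from Equation \ref{def:eq:unipot}, via Mal'cev). That group commutes with $\rho_{k}$ and acts transitively on $\R^{2}$. Conjugating by the element taking $a$ to $0$ leaves $\rho_{k}$ unchanged and moves the anchor to the origin. For $k=0$ a translation does the same job. So, without loss of generality, the candidate tile is
\[
Q_{k}=\big(0,\;T_{k}(e_{1}),\;T_{k}(e_{1}+e_{2}),\;T_{k}(e_{2})\big),
\]
with $T_{k}$ as in Equation \ref{eq:hol_rep}. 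Each of these vertices is polynomial in $k$.

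\textbf{The inequalities (necessity).} Strict convexity and counter-clockwise orientation of $Q_{k}$ mean that the four determinants $\det(v_{i},v_{i+1})$ of consecutive edges (indices mod $4$) are all positive. These four polynomial inequalities in $(k_{1},k_{2})$ are what I take as Equation \ref{eq:conv_cond}. Because the inequalities are strict, the set they define is open and semialgebraic. Necessity follows from the normalization: any marked complete affine tiling with holonomy $\rho_{k}$ has, after normalizing, marked tile equal to $Q_{k}$, so $k$ must satisfy the inequalities.

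\textbf{Sufficiency.} For $k$ satisfying the inequalities, I must show that the translates $\rho_{k}(\gamma)Q_{k}$ tile the plane. This is the step I expect to be the main obstacle.
\begin{itemize}
\item By construction, adjacent translates share full edges.
\item Around each vertex exactly four tiles meet, namely the images of the four corners of $Q_{k}$. For example, the tiles around $c$ are $Q_{k}$, $e_{1}Q_{k}$, $e_{2}Q_{k}$ and $(e_{1}+e_{2})Q_{k}$.
\item Their interior angles sum to a multiple of $2\pi$. This is because the corresponding loop in $\Z^{2}$ is a commutator, whose holonomy is trivial.
\item The winding number is a continuous integer-valued function of $k$ on a connected region of the set where the inequalities hold that contains $k=0$, where it equals $1$ (the parallelogram case). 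Hence the angle sum is exactly $2\pi$ throughout that region.
\end{itemize}
Given the angle sum, I would apply the Poincaré polygon theorem. Alternatively, I would map the quotient of the glued quadrilaterals into $\R^{2}$ and argue that this map is a local homeomorphism which is also a covering; since the glued surface is complete, the map is a homeomorphism. Either route shows that $Q_{k}$ is a fundamental domain for a properly discontinuous action, and therefore the translates tile the plane. If the winding argument fails on components of the region not containing $0$, the fallback is to prove directly that the convex region is connected, by checking that the inequalities are star-shaped in $k$.

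\textbf{Unboundedness.} Finally, I exhibit an unbounded ray inside the region. For $k=(t,0)$ we have $k^{\perp}=(0,t)$, which gives $T_{k}(e_{1})=(1,-t^{3}/2)$ and $T_{k}(e_{2})=(0,1)$. Then
\[
Q_{k}=\big(0,\;(1,-t^{3}/2),\;(1,1-t^{3}/2),\;(0,1)\big),
\]
which is a counter-clockwise parallelogram for every $t\in\R$. So the region is unbounded; it contains the Euclidean point, and it is open and algebraic as required.
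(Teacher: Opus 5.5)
Your necessity, openness and unboundedness steps follow the paper's proof. You normalize the anchor to $0$, which forces the marked tile to be $Q_{k}=(0,T_{k}(e_{1}),T_{k}(e_{1}+e_{2}),T_{k}(e_{2}))$, read convexity as positivity of the four consecutive determinants, and use the axis $k_{1}k_{2}=0$ for unboundedness. Your centralizer argument for moving the anchor is a more explicit version of the paper's ``without loss of generality''. You should still carry out the computation rather than define Equation \ref{eq:conv_cond} as its output. With $T_{k}(e_{1})=(1+\tfrac{1}{2}k_{1}^{2}k_{2},\,-\tfrac{1}{2}k_{1}^{3})$, $T_{k}(e_{2})=(\tfrac{1}{2}k_{2}^{3},\,1-\tfrac{1}{2}k_{1}k_{2}^{2})$ and $T_{k}(e_{1}+e_{2})=(1+\tfrac{1}{2}k_{2}(k_{1}+k_{2})^{2},\,1-\tfrac{1}{2}k_{1}(k_{1}+k_{2})^{2})$, the determinants $\det(v_{i},v_{i+1})$ are exactly half of the four expressions in Equation \ref{eq:conv_cond}, in order. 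Each is $2$ plus a homogeneous cubic in $k$, so the region is star-shaped about $0$ and your connectedness fallback is immediate.

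Sufficiency is where you go beyond the paper, which simply asserts that $Q_{k}$ is a fundamental domain. Your justification there does not close as written.
\begin{itemize}
\item The Poincar\'e polygon theorem concerns isometric side pairings, and its proof uses metric completeness of the glued surface to make the developing map a covering. In affine geometry a closed surface glued from a convex polygon can be incomplete, so ``since the glued surface is complete'' assumes what is to be proved.
\item For example, take the convex quadrilateral with vertices $(1,0),(2,0),(0,2),(0,1)$, with your side-pairing pattern realized by the commuting maps $v\mapsto 2v$ and rotation by $\pi/2$. It has angle sum $2\pi$ at its single vertex cycle, yet its images tile only $\R^{2}\setminus\{0\}$.
\end{itemize}
What rules this out is a fact you already have: $\rho_{k}(\Z^{2})$ acts freely, properly and cocompactly on $\R^{2}$, since $\Psi_{k}$ conjugates it to the lattice translations. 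So the equivariant developing map of the glued torus $M$ descends to a local homeomorphism $M\to\R^{2}/\rho_{k}(\Z^{2})$ of closed surfaces. This map is a finite covering inducing the isomorphism $\gamma\mapsto\rho_{k}(\gamma)$ on $\pi_{1}$, hence a homeomorphism. Therefore the developing map is also a homeomorphism, and this gives the tiling.

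Finally, your winding-number continuity argument is unnecessary. The four angles at a vertex each lie in $(0,\pi)$ and sum to a positive multiple of $2\pi$, so the sum is exactly $2\pi$.
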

\begin{proof}
Let $T_{k}^{2}$ be a marked complete affine torus whose holonomy $\rho_{k} : \Z^{2} \lra \Affp(2,\R)$ admits a convex quadrilateral fundamental domain $Q$ for the action of $\rho_{k}(\Z^{2})$ on $\R^{2}$.  Because $T_{k}^{2}$ is compatibly oriented with $\R^{2}$, and $\rho_{k}(e_{1})$ and $\rho_{k}(e_{2})$ glue opposite pairs of edges, there is a unique vertex $p \in \del Q$ so that $v_{1} := \rho_{k}(e_{1})p - p$ is an edge of $Q$ whose interior lies to the left of this vector.  The image of $Q$ under $\rho_{k}(\Z^{2})$ tiles $\R^{2}$ and thus produces a marked complete affine tiling of $\R^{2}$ marked by edge $v_{1}$ of $Q$.  \\
\\
By the classification of marked complete affine tori as in Section \ref{ssec:afgeo}, there exists an affine transformation $g \in \Affp(2,\R)$ so that $\rho_{k}$ above is conjugate to the representation determined by Equation \ref{eq:hol_rep}.  In particular, this means $gQ$ is also a marked convex quadrilateral fundamental domain marked by $L(g)v_{1}$ where $L(g)$ is the linear part of $g$ and where $e_{1}$ and $e_{2}$ glue opposite pairs of edges of $gQ$.  We may without loss of generality choose $g$ so that $gQ$ has its anchor at the origin.  The standard labeling of $gQ$ is thus the images of $v_{1},\hdots, v_{4}$ under $L(g)$.  As $e_{1}$ and $e_{2}$ glue opposite pairs of edges of $gQ$, and $gQ$ has its anchor at the origin, this means the vectors $L(g)v_{1},\hdots, L(g)v_{4}$ take the following form below where $T_{k}(\gamma)$ is the translational part of the element $\gamma$ as in Equation \ref{eq:hol_rep}.
\begin{equation*}
\{L(g)v_{1},\hdots, L(g)v_{4}\} := \{T_{k}(e_{1}), T_{k}(e_{1}+e_{2}) - T_{k}(e_{1}), T_{k}(e_{2}) - T_{k}(e_{1}+e_{2}), -T_{k}(e_{2})\}
\end{equation*}
By convexity, we know the $\det(L(g)v_{i},L(g)v_{i+1}) > 0$ for all $i = 1,\hdots 4$ where the indices are interpreted cyclically.  Routine calculation shows this necessitates the above algebraic expressions in $k = (k_{1},k_{2})$ are all strictly positive.  
\begin{equation}\label{eq:conv_cond}
2-k_{1}k_{2}(k_{1}+k_{2}) \phantom{=} 2+k_{1}k_{2}(-k_{1}+k_{2}) \phantom{=} 2+k_{1}k_{2}(k_{1}+k_{2}) \phantom{=} 2+k_{1}k_{2}(k_{1}-k_{2})
\end{equation}
Thus if we start with a marked complete affine torus $T_{k}^{2}$ whose holonomy representation admits a convex quadrilateral fundamental domain $Q$ gluing opposite pairs of edges, the period parameters necessarily satisfy the algebraic conditions as in Equation \ref{eq:conv_cond}.
\\
\\
Conversely, if we start with a period parameter $k$ that satisfies the inequalities in Equation \ref{eq:conv_cond}, we can form the convex quadrilateral fundamental domain given by $Q = \{0, T_{k}(e_{1}), T_{k}(e_{1}+e_{2}),T_{k}(e_{2})\}$.  We mark this tile $Q$ by the oriented edge connecting $0$ and $T_{k}(e_{1})$, and this determines a complete marked affine torus whose holonomy representation is given by $\rho_{k} : \Z^{2} \lra \Affp(2,\R)$.  It is clear the conditions of Equation \ref{eq:conv_cond} are open on the period parameters, and, unbounded as all $k$ for which $k_{1}k_{2} = 0$ also satisfy the conditions of Equation \ref{eq:conv_cond}.  
\end{proof}
The set of such $k$ satisfying the conditions of Theorem \ref{thm:marked_tiles} is illustrated in Figure \ref{fig:convexmarked}.  As remarked before Theorem \ref{thm:marked_tiles}, this difficulty does not present itself in the classical case.  There, the deformation space of marked flat conformal structures on the torus identifies with the upper half plane.  A single complex number $z$ with positive imaginary part determines the marking $\rho_{z}: \Z^{2} \lra \text{Conf}(\C)$, and one can always construct a fundamental domain $Q$ so that $\rho_{z}(e_{1})$ and $\rho_{z}(e_{2})$ glue opposite pairs of edges of $Q$ together which is compatible with the marking.  If however, we are willing to forget the marking, then any complete affine torus $M$ always admits a convex quadrilateral fundamental domain.  That is to say, one can always find an oriented pair of generators of $\alpha, \beta \in \Z^{2}$ and a convex quadrilateral fundamental domain $Q$ so that $\rho_{k}(\alpha)$ and $\rho_{k}(\beta)$ glue opposite edges together.  This original proof is due to Baues, however we include it for the sake of self-containment \cite{Baues1999Gluing}.   

\begin{figure}
\includegraphics[scale=0.25]{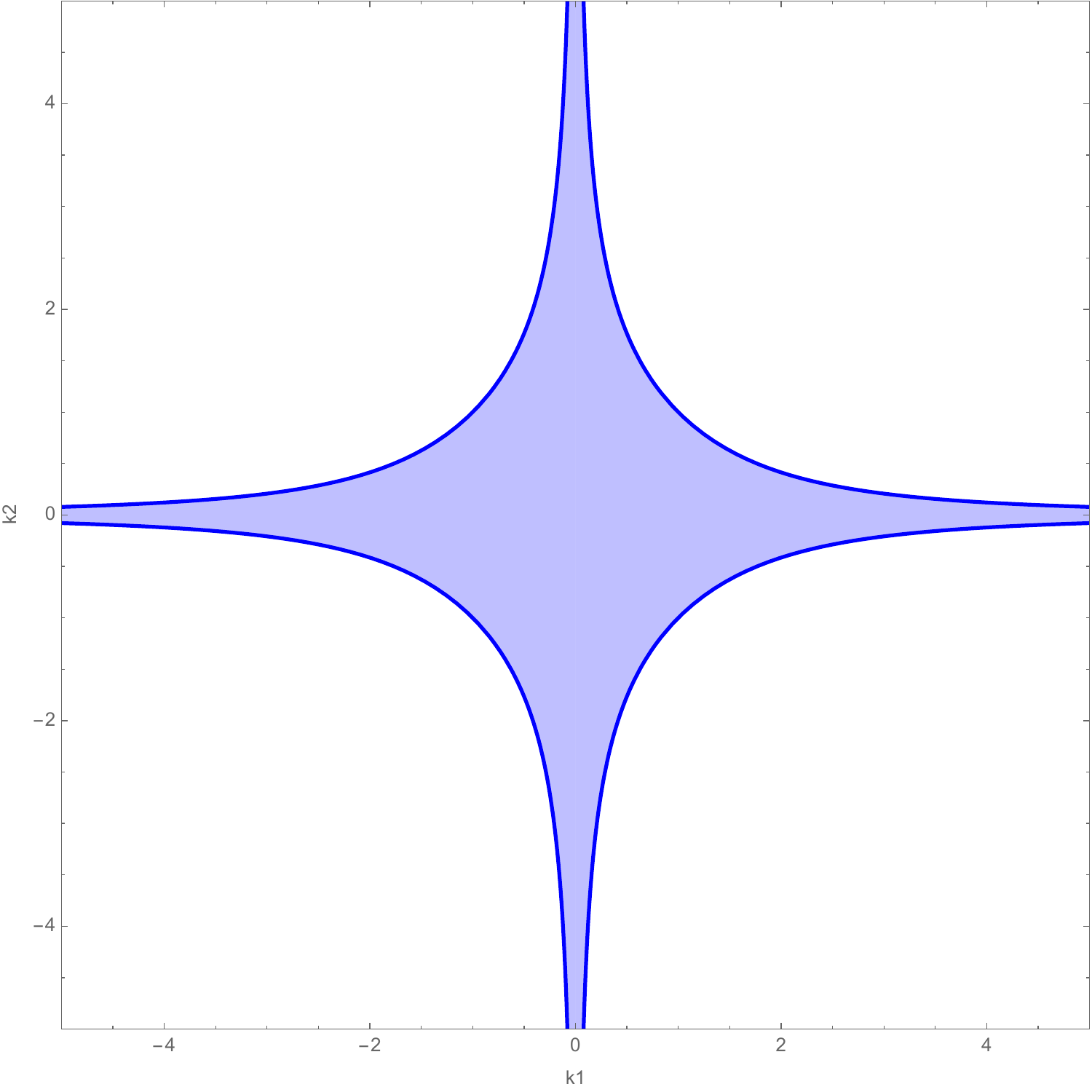}
\caption{The $x,y$ axes are $k_{1},k_{2}$ respectively.  This figure is the collection of all equivalence classes of marked complete affine tori $k \in \R^{2} \simeq \CDT$ whose holonomy representation $\rho_{k}: \Z^{2} \lra \Affp(2,\R)$ are, up to conjugation, the marked tiling representation of some marked complete affine tiling $T$ of $\R^{2}$.  We've included the boundary in thick blue line for the sake of illustration.  Points on this boundary correspond to when two edges of the marked convex quadrilateral become parallel.  
}\label{fig:convexmarked}
\end{figure}

\begin{lemma}\label{lem:convex_tilings}
Let $\Gamma_{k} \leq \Affp(2,\R)$ be the image of the holonomy map $\rho_{k}: \Z^{2} \lra \Affp(2,\R)$ of a complete affine torus $T_{k}^{2}$.  Then there exists a convex quadrilateral fundamental domain for the action of $\Gamma_{k}$ on $\R^{2}$.  
\end{lemma}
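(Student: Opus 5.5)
The plan is to reduce to Theorem \ref{thm:marked_tiles} by changing the marking, using the mapping class group action of $\SL(2,\Z)$ on $\CDT \simeq \R^{2}$. Forgetting the marking means we may replace $\rho_{k}$ by $\rho_{k}\circ A$ for any $A \in \SL(2,\Z)$; the image group $\Gamma_{k}$ is unchanged. The new representation is conjugate to $\rho_{k'}$ for some $k'$, and since the canonical form $\omega_{M}$ is intrinsic to $M$, the period pair transforms linearly: $k' = A^{T}k$. It therefore suffices to show that the $\SL(2,\Z)$-orbit of every $k$ meets the open region cut out by Equation \ref{eq:conv_cond}. Given such $k'$, the proof of Theorem \ref{thm:marked_tiles} gives the quadrilateral $Q = \{0, T_{k'}(e_{1}), T_{k'}(e_{1}+e_{2}), T_{k'}(e_{2})\}$. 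Conjugating back by the affine map relating $\rho_{k'}$ and $\rho_{k}\circ A$ yields a convex quadrilateral fundamental domain for $\Gamma_{k}$, with generators $\alpha = Ae_{1}$ and $\beta = Ae_{2}$ gluing opposite edges.

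For the orbit claim, note first that Theorem \ref{thm:marked_tiles} already observes that every $k$ with $k_{1}k_{2}=0$ satisfies Equation \ref{eq:conv_cond}. The Euclidean point $k=0$ is trivial. If $k$ is rational and nonzero, choose a primitive $\gamma \in \Z^{2}$ with $k\cdot\gamma = 0$, extend it to a basis, and take $A \in \SL(2,\Z)$ whose columns are this basis with $\gamma$ placed first. Then $k' = A^{T}k$ has $k'_{1} = k\cdot\gamma = 0$, which finishes this case.

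The main obstacle is the irrational case, where no coordinate of $k'$ can vanish. Here the key observation is that each expression in Equation \ref{eq:conv_cond} has the form $2 \pm k_{1}k_{2}(k_{1}\pm k_{2})$. So it suffices to find $k'$ in the orbit with $|k'_{1}k'_{2}|(|k'_{1}|+|k'_{2}|) < 2$. The plan is to use the continued fraction (Dirichlet) approximation of the slope of $k^{\perp}$: choose a primitive $\gamma$ with $|k\cdot\gamma| \le C/|\gamma|$, and complete it to a basis $(\gamma,\delta)$ with $|\delta| \lesssim |\gamma|$ by using consecutive convergents. Then $k'_{1} = k\cdot\gamma$ is $O(1/|\gamma|)$, while $k'_{2} = k\cdot\delta$ is $O(1/|\gamma|)$ as well, since for consecutive convergents $|k\cdot\delta|\,|\gamma| \le |k|$ up to constants. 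Hence $|k'_{1}k'_{2}|(|k'_{1}|+|k'_{2}|) = O(|\gamma|^{-3}) \to 0$, and the inequalities hold once $|\gamma|$ is large.

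The remaining care points are bookkeeping. One is the sign and transpose convention in $k' = A^{T}k$; the other is verifying that precomposition by $A$ corresponds exactly to the change of marking $f \mapsto f\circ A$, so that Baues--Goldman's classification identifies $\rho_{k}\circ A$ with $\rho_{k'}$ up to conjugation. Alternatively, one could avoid the transformation law entirely by checking directly that $\rho_{k}\circ A$ is conjugate to $\rho_{A^{T}k}$ from Equation \ref{eq:hol_rep}, since $L_{k}(A\gamma)$ depends only on $k\cdot A\gamma = A^{T}k\cdot\gamma$.
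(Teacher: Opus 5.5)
Your proof is correct. Its skeleton matches the paper's: change the marking by an element of $\SL(2,\Z)$, which leaves $\Gamma_k$ unchanged. When a rational relation exists, you put a generator of $\ker L_k$ first in an oriented basis, so that $k'_1=0$, which lies in $\MCQ$.

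The two arguments differ in the case with no rational relation. The paper asserts that the $\SL(2,\Z)$-orbit of $k$ is dense and therefore meets the open set $\MCQ$. It supports this only by the ergodicity remark in Section~\ref{ssec:afgeo}, which by itself gives density of almost every orbit, not of every irrational-slope orbit.

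You prove only what is needed, and you prove it directly. All four expressions in Equation~\ref{eq:conv_cond} equal $2$ at $k=0$, so the origin is an interior point of the region. Consecutive convergents of $k_1/k_2$ give an oriented basis (after negating one vector if necessary) with both pairings $k\cdot\gamma_n$, $k\cdot\gamma_{n+1}$ tending to $0$. Hence $k'\to 0$, and your cubic estimate is more than is required.

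What your route buys is a self-contained, effective argument that produces an explicit $A$. It also states the transformation law $k'=A^{T}k$, which the paper leaves implicit. The paper's route is shorter but relies on a cited density fact.

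One small point on your fallback verification of the conjugacy. Noting that $L_k(A\gamma)$ depends on $\gamma$ only through $A^{T}k\cdot\gamma$ is not enough, because $L_k$ also involves $k\cdot v$ and $k^{\perp}$. The clean statement is $\Psi_k\circ A = A\circ\Psi_{A^{T}k}$. This follows from $A(A^{T}k)^{\perp}=k^{\perp}$, that is, $AJA^{T}=(\det A)J=J$ for the quarter-turn $J$. Equivariance then gives $\rho_k(A\gamma)=A\,\rho_{A^{T}k}(\gamma)\,A^{-1}$ directly.
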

\begin{proof}
Let $\rho_{k}: \Z^{2} \lra \Affp(2,\R)$ be the holonomy map of a complete affine torus, and $\Gamma_{k}$ denotes its image in $\Affp(2,\R)$.  We begin first by observing that in the deformation space $\CDT$, the property of $\Gamma_{k}$ admitting a convex quadrilateral fundamental domain is invariant under the mapping class group, as the image of the representation stays the same.  Thus it suffices to find a mapping class group element $A \in \MCG(T^{2})$ so that $(A\rho_{k}): \Z^{2} \lra \Affp(2,\R)$ defined by $\rho_{k}\circ A^{-1}$ is an element of the open subset of $\CDT$ which admits convex quadrilateral fundamental domains as in Theorem \ref{thm:marked_tiles}.\\
\\
To this end, if $k \in \R^{2} \simeq \CDT$ it has coefficients $(k_{1},k_{2})$ with no non-trivial rational solution to $ak_{1} + bk_{2} = 0$ for $a,b \in \Q$, then the orbit of $\rho_{k}$ under $\MCG(T^{2})$ is dense and the result follows.  If however $k \in \R^{2}$ is rational, we construct a fundamental domain for the $\Gamma_{k}$-action on $\R^{2}$.  By rationality, the linear part of the map $L_{k}: \Z^{2} \lra \Gamma_{k} \lra \SL(2,\R)$ has non-trivial kernel.  Let $\alpha$ be a generator of this kernel and $\beta$ be an element of $\Z^{2}$ so that $(\alpha,\beta)$ form an oriented basis of $\Z^{2}$.  It follows then that $\rho_{k}(\alpha)$ is a translation, and thus $k_{1} = 0$.  If we let $A$ be the mapping class group element satisfying $A(\alpha) = e_{1}$ and $A(\beta) = e_{2}$, which exists because $(\alpha,\beta)$ is an oriented basis of $\Z^{2}$, then the representation $A\rho_{k}$ has period parameters which satisfy $k_{1}k_{2} = 0$, as $(A\rho_{k})(e_{1})$ is a translation by construction, thus $A\rho_{k} \in \MCQ$.  By the same observation applied to the irrational case, this means $A\rho_{k}$ admits a convex quadrilateral fundamental domain.  
\end{proof}
Another way of interpreting Lemma \ref{lem:convex_tilings} is that if we have the holonomy representation of a marked complete affine torus with period parameter $k \in \CDT$, then there is an element of the mapping class group $A \in \MCG(T^{2})$ so that $A\rho_{k} := \rho_{k}\circ A^{-1}$ is a new marking of the torus, whose period parameter lands in the open unbounded subset of $\CDT$ defined in Theorem \ref{thm:marked_tiles}.  For the remainder of this work, we will work exclusively with this set, thus we dedicate a definition to it.
\begin{definition}\label{def:mcqs}
The unbounded open subset of $\CDT$ of marked representations $\rho_{k}: \Z^{2} \lra \Affp(2,\R)$, up to conjugation, which admit marked complete affine tilings with marked tile $Q$ so that $\rho_{k}(e_{1})$ and $\rho_{k}(e_{2})$ glue opposite pairs of edges, is the \emph{marked convex complete tilings subspace} of $\CDT$ and denoted by $\MCQ$.  
\end{definition}
We remark Baues was the original author to address this space, see [Section 4.6, Corollary 4.11], however the precise algebraic inequalities in the period parameters defining $\MCQ$ as in Equation \ref{eq:conv_cond} are new to the authors' knowledge \cite{Baues1999Gluing}.  The motivation for our consideration of $\MCQ$ derives from the desire to have a large parameter space of tilings to play symplectic tiling billiards on.  Ideally, we wish to be able to choose pairs of parameters in $\MCQ$ freely.  This is the content of Theorem \ref{thm:goodtiles}, but before proving this theorem we provide two technical lemmas and a definition that will aid us in our analysis.  
\begin{lemma}\label{lem:caninvs}
Let $T$ be a marked complete non-Euclidean affine tiling of $\R^{2}$ and $\Gamma_{T}$ the image of its marked tiling representation as defined in Definition \ref{def:mtr}.  There exists a unique area form $d \Omega_{T}$, vector field $X_{T}$, one-form $\omega_{T}$, and polynomial vector field $Y_{T}$ all of which are parallel, $\Gamma_{T}$-invariant, and satisfy the conditions below.
\begin{equation*}
\int_{Q} d\Omega_{T} = 1, \phantom{=} X_{T} = \nabla_{Y_{T}} Y_{T}, \phantom{=} d\Omega_{T}(X_{T},Y_{T}) = 1, \phantom{=} \omega_{T} = \iota_{X_{T}}(d\Omega_{T})
\end{equation*}
where $Q$ is any tile $T$ and the uniqueness of $Y_{T}$ is up to constant multiples of $X_{T}$.  
\end{lemma}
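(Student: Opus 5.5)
The plan is to transport the canonical objects of Lemma \ref{lem:canform} from the torus to the plane, using the fact that the tiling is a fundamental-domain picture of a complete affine torus. By Definition \ref{def:mtr}, the marked tiling representation $\rho_{T}$ acts properly discontinuously with $Q$ as a fundamental domain. The quotient $M = \R^{2}/\Gamma_{T}$ is therefore a complete affine torus, and it is non-Euclidean by hypothesis. By the Baues--Goldman classification recalled in Section \ref{ssec:afgeo}, there exist $g \in \Affp(2,\R)$ and $k \neq 0$ with $\rho_{T} = g^{-1}\rho_{k}g$.

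\textbf{Existence.} I would pull back the explicit invariants of Equations \ref{eq:polyvec}, \ref{eq:parvec}, \ref{eq:parone} and $dx\wedge dy$ by $g$. Since $g$ is affine, pullback preserves the flat connection $\nabla$, so $g^{*}$ of a parallel object is parallel and $g^{*}$ of a polynomial vector field is polynomial. Invariance under $\rho_{k}(\Z^{2})$ becomes invariance under $\Gamma_{T}$, and the identities $X = \nabla_{Y}Y$, $d\Omega(X,Y)=1$, $\omega = \iota_{X}d\Omega$ are natural under affine pullback.

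\textbf{Normalization.} The tile $gQ$ is a fundamental domain for $\rho_{k}$, and its area with respect to $dx\wedge dy$ equals the area of $T_{k}^{2}$, which is $1$ by the area-preserving computation for $\Psi_{k}$ given earlier. Hence $\int_{Q} g^{*}(dx\wedge dy) = 1$. The same holds for every tile $\gamma Q$, since $d\Omega_{T}$ is $\Gamma_{T}$-invariant and every tile lies in the $\Gamma_{T}$-orbit of $Q$. This justifies the phrase ``$Q$ is any tile''.

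\textbf{Uniqueness.} The main step, I expect, is uniqueness, because the objects are only claimed unique inside the class of parallel, $\Gamma_{T}$-invariant ones. I would argue directly in the conjugated model $\rho_{k}$.
\begin{enumerate}
\item A parallel area form is a constant multiple of $dx\wedge dy$, and the normalization $\int_{Q} d\Omega_{T}=1$ fixes the constant.
\item A parallel invariant vector field is a constant vector fixed by every linear holonomy $L_{k}(\gamma)$. From Equation \ref{eq:hol_rep}, the fixed vectors are exactly those with $k\cdot v = 0$, i.e.\ the multiples of $k^{\perp} = -X_{k}$. The scalar multiple is then pinned down by the conditions on $Y$.
\item Write $Y = Y_{k} + Z$. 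Combining $\nabla_{Y}Y = X$ with $d\Omega(X,Y)=1$ and invariance, I expect a short computation to force $X = X_{k}$ exactly and $Z$ to be a constant multiple of $X_{k}$. This computation is the delicate point; I would carry it out by expanding $Y$ as an affine vector field $Ap+b$ and imposing equivariance under $L_{k}$ for two independent loops.
\item The one-form $\omega_{T} = \iota_{X_{T}}d\Omega_{T}$ is then determined by $X_{T}$ and $d\Omega_{T}$.
\end{enumerate}
Alternatively, one could invoke the uniqueness statement of Lemma \ref{lem:canform} on the quotient torus. There, invariant objects on $\R^{2}$ correspond to objects on $M$, and $\int_{M} d\Omega = \int_{Q} d\Omega$ because $Q$ is a fundamental domain. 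This shortcut reduces the whole uniqueness claim to Lemma \ref{lem:canform}.
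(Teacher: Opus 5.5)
Your proposal is correct, and its fallback for uniqueness is exactly the paper's proof. The paper lifts the objects of Lemma \ref{lem:canform} from the torus $\R^{2}/\rho_{T}(\Z^{2})$ to $\R^{2}$ and gets $\int_{Q} d\Omega_{T}=1$ from $Q$ being a fundamental domain, so it inherits existence and uniqueness from Baues--Goldman in one line. Your main route differs: you conjugate to the explicit model $\rho_{k}$ and pull back Equations \ref{eq:polyvec}--\ref{eq:parone} by $g$. Your area normalization is the same fundamental-domain argument. This route gives concrete formulas and, once the uniqueness computation is done, a check that does not use the uniqueness clause of Lemma \ref{lem:canform}. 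The cost is that the computation you only sketch must actually be carried out.

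If you carry it out, fix two things.
\begin{enumerate}
\item Do not start from the ansatz $Y=Ap+b$. $Y_{T}$ is only assumed polynomial, and it cannot be parallel despite the wording, since otherwise $X_{T}=\nabla_{Y_{T}}Y_{T}=0$. An affine ansatz therefore leaves higher-degree invariant solutions unexcluded.
\item Impose invariance under the full affine maps $\rho_{k}(\gamma)$, translational parts included, not just under $L_{k}(\gamma)$.
\end{enumerate}
A clean argument runs as follows.
\begin{enumerate}
\item Take $d\Omega=dx\wedge dy$ and $X=cX_{k}$. Since $\det(X_{k},v)=k\cdot v$, the condition $d\Omega(X,Y)=1$ says $k\cdot Y=1/c$. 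Hence $Y=\frac{1}{c}\frac{k}{|k|^{2}}+fX_{k}$ with $f$ polynomial.
\item Since $k\cdot(\rho_{k}(\gamma)p)=k\cdot p+k\cdot\gamma$, invariance gives $f(\rho_{k}(\gamma)p)=f(p)+(k\cdot\gamma)/c$.
\item So $f-(k\cdot p)/c$ is a $\rho_{k}(\Z^{2})$-invariant polynomial. It descends to the compact torus, so it is bounded, hence constant.
\item Then $\nabla_{Y}Y=\frac{1}{c}(k\cdot Y)X_{k}=c^{-2}X_{k}$, and requiring $\nabla_{Y}Y=cX_{k}$ forces $c^{3}=1$, i.e.\ $c=1$.
\end{enumerate}
This gives $X=X_{k}$ and $Y=Y_{k}+CX_{k}$, as you anticipated.
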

\begin{proof}
Lift the invariants as provided in Lemma \ref{lem:canform} from the complete affine torus $\R^{2}/\rho_{T}(\Z^{2})$ to $\R^{2}$.  The fact that $Q$ has area $1$ follows because $Q$ is a fundamental domain for the $\rho_{T}(\Z^{2})$-action on $\R^{2}$.  
\end{proof}
We frequently refer to these invariants as the canonical invariants of the marked complete non-Euclidean affine tiling $T$.  Definitions of non-negative and non-positive directions relative to the one-form $\omega_{T}$ as in Definition \ref{def:posneg_dir} follow readily.  Similarly, we say a loop $\gamma \in \Z^{2}$ is non-negative or non-positive if $[T_{T}(\gamma)] \in \fD$ is $\omega_{T}$-non-negative or non-positive where $T_{T}(\gamma)$ is the translational part of the affine transformation $\rho_{T}(\gamma)$.  We make similar definitions for $\omega_{T}$-positive, negative, and neutral loops.   
\\
\\
As mentioned before, the motivation to consider these tilings is to play symplectic tiling billiards.  The edge directions of the marked complete affine tiling $\fD_{T} \subset \fD$ as defined in Definition \ref{def:cded} thus naturally come into consideration.  Because our tilings exhibit affine symmetry, this symmetry is inherited by $\fD_{T}$.  In particular note that the edges $\{v_{1},\hdots, v_{4}\}$ of the marked tile $Q$ are sent to the edges $\{L_{T}(\gamma)(v_{1}),\hdots L_{T}(\gamma)(v_{4})\}$ of the marked tile $\gamma Q$ where $L_{T}$ denotes the linear holonomy of the marked tiling representation.  Thus the edge directions determined by $[\pm v_{i}] \in \fD_{T}$ for every marked tile are invariant under the $\Z^{2}$-action on the circle of directions.  The positive loops of $\Z^{2}$ move the positive and negative directions in a clockwise manner towards $[X_{T}]$ and $[-X_{T}]$ respectively, whereas the negative loops move them counter-clockwise as noted in the paragraph after Definition \ref{def:posneg_dir}.  The condition in which the invariant directions $[\pm X_{T}]$ are in $\fD_{T}$ can be detected by the following equivalences. 
\begin{lemma}\label{lem:k1k2zero}
Let $T$ be a marked complete affine tiling of $\R^{2}$ with marked tiling representation $\rho_{T}$.  The following conditions are equivalent.
\begin{itemize}
	\item The direction of a linear holonomy invariant vector is contained in $\fD_{T}$.
	\item The boundary of the tiling $\del T$ contains an infinite line.
	\item The period parameters of the tiling satisfy $k_{1}k_{2} = 0$.
\end{itemize}
\end{lemma}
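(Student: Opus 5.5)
We normalize by conjugation and read everything off the explicit holonomy. By the classification in Section \ref{ssec:afgeo} and the proof of Theorem \ref{thm:marked_tiles}, after applying an element of $\Affp(2,\R)$ (which preserves lines, parallelism and the set of holonomy invariant directions) we may assume $\rho_{T}=\rho_{k}$. We may also assume the marked tile $Q$ has vertices $0,\ T_{k}(e_{1}),\ T_{k}(e_{1}+e_{2}),\ T_{k}(e_{2})$. The Euclidean case $k=0$ is set aside first: there the linear holonomy is trivial, every direction is invariant, and the parallelogram tiling visibly contains infinite lines, so all three conditions hold. For $k\neq 0$, Equation \ref{eq:hol_rep} shows $L_{k}(\gamma)$ is unipotent. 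It is the identity exactly when $k\cdot\gamma=0$. Otherwise its only invariant direction is $[\pm X_{k}]=[\mp k^{\perp}]$, so the first condition reads: some edge of $T$ is parallel to $k^{\perp}$.

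For (1)$\Leftrightarrow$(3), note every edge direction in $\fD_{T}$ has the form $[\pm L_{k}(\gamma)v_{i}]$. Since $L_{k}(\gamma)$ fixes the line $\R k^{\perp}$ and is invertible, $L_{k}(\gamma)v_{i}\parallel k^{\perp}$ if and only if $v_{i}\parallel k^{\perp}$. The edges of $Q$ have the form $v_{1}=e_{1}-\tfrac12 k_{1}^{2}k^{\perp}$, $v_{2}=e_{2}+c_{2}k^{\perp}$, $v_{3}=-e_{1}+c_{3}k^{\perp}$ and $v_{4}=-e_{2}+c_{4}k^{\perp}$ for scalars $c_{i}$. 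Hence $v_{1}$ or $v_{3}$ is parallel to $k^{\perp}=(-k_{2},k_{1})$ iff $e_{1}\parallel k^{\perp}$, i.e.\ iff $k_{1}=0$. Likewise $v_{2}$ or $v_{4}$ is parallel to $k^{\perp}$ iff $k_{2}=0$. This gives (1)$\Leftrightarrow$(3).

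For (3)$\Rightarrow$(2), suppose $k_{1}=0$. Then $k\cdot e_{1}=0$, so $\rho_{k}(e_{1})$ is the translation by $T_{k}(e_{1})=e_{1}=v_{1}$. The bottom edges of the tiles $(n,0)Q$, $n\in\Z$, are then the consecutive translates $[ne_{1},(n+1)e_{1}]$, and their union is the full line $\R e_{1}\subset\del T$. The case $k_{2}=0$ is symmetric, using $e_{2}$ and the left edges.

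The step needing an idea is (2)$\Rightarrow$(1), and the plan is a pigeonhole argument. An infinite line $\ell\subset\del T$ is a union of infinitely many edges, since tiles are compact and the tiling is locally finite. Each edge lies in one of the four $\Gamma_{T}$-orbits of the edges of $Q$. So there are two distinct edges $\epsilon,\epsilon'\subset\ell$ and $0\neq\gamma\in\Z^{2}$ with $\rho_{k}(\gamma)\epsilon=\epsilon'$. A line is determined by any nondegenerate segment in it, hence $\rho_{k}(\gamma)\ell=\ell$, and $L_{k}(\gamma)$ preserves the direction $u$ of $\ell$. If $k\cdot\gamma\neq 0$, uniqueness of the invariant direction of a nontrivial unipotent forces $u\parallel k^{\perp}$. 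If $k\cdot\gamma=0$, then $\rho_{k}(\gamma)$ is the translation by $T_{k}(\gamma)=\gamma\neq 0$, and $\gamma\perp k$ means $\gamma\parallel k^{\perp}$. A translation preserving a line translates along it, so again $u\parallel k^{\perp}$. In both cases the edges on $\ell$ have holonomy invariant direction, which is (1). I expect the only delicate point to be this orbit-counting reduction. The rest is direct computation with Equation \ref{eq:hol_rep}.
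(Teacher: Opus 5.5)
Your proof is correct, but it is organized differently from the paper's.

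\textbf{The paper's route.} It proves the cycle (1)$\Rightarrow$(2)$\Rightarrow$(3)$\Rightarrow$(1) intrinsically, in terms of $X_T$ and $\omega_T$. For (2)$\Rightarrow$(3) it uses the $\Z^2$-action to move the line onto an edge of $Q$, say $v_1$. It then asserts that $\rho_T(e_1)(v_1)$ is parallel to $v_1$, and appeals to parabolicity of the linear holonomy to get $v_1\parallel X_T$, hence $k_1=\int_{v_1}\omega_T=0$. The step (3)$\Rightarrow$(1) is read off from Equation \ref{eq:edge_par}.

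\textbf{Your route.} You normalize to $\rho_k$ with anchor at the origin. You then get (1)$\Leftrightarrow$(3) in one stroke from the explicit edge vectors. Your decomposition $v_1=e_1+c\,k^{\perp}$, etc., is just the coordinate form of Equation \ref{eq:edge_par}, since $\omega_k(v)=k\cdot v$ vanishes exactly on $\R k^{\perp}$. You replace the paper's (2)$\Rightarrow$(3) by a pigeonhole over the finitely many edge orbits.

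\textbf{What your route buys.} It is more robust exactly where the paper is terse. The paper's claim that $\rho_T(e_1)(v_1)$ lies on the same line as $v_1$ needs an unstated argument: the line must run straight through the vertex into the $v_1$-edge of $(1,0)Q$, which uses convexity and four tiles at each vertex. You only need \emph{some} $\gamma\neq 0$ carrying one edge of $\ell$ to another. Your case split $k\cdot\gamma\neq 0$ versus $k\cdot\gamma=0$ also makes explicit the pure-translation case, which the paper's appeal to parabolicity passes over. Similarly, your (3)$\Rightarrow$(2) shows explicitly that $\rho_k(e_1)$ is translation by $v_1$. That fact is what actually guarantees that the parallel edges in the paper's (1)$\Rightarrow$(2) are collinear and exhaust a full line.

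\textbf{What it costs.} You need the normalization step: keeping $\rho_T=\rho_k$ while moving the anchor to $0$ uses that $\rho_k(\R^2)$ is simply transitive and centralizes $\rho_k(\Z^2)$. The paper uses this tacitly too, in the proof of Theorem \ref{thm:marked_tiles}. You also need a separate Euclidean case, whereas the paper's formulation is coordinate-free.
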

\begin{proof}
Let $[X_{T}]$ be the direction of a linear holonomy invariant vector of the marked tiling representation contained in $\fD_{T}$.  This means there is a marked tile $Q$ in $T$ that has an oriented edge $v_{i}$ parallel to $X_{T}$.  Without loss of generality assume it's $v_{1}$.  By holonomy invariance this means for every $n \in \Z$, $L_{T}^{n}(e_{1})v_{1}$, which is the $v_{1}$-edge of the tile $(n,0)Q$, is also parallel to $X_{T}$.  Thus there is an infinite line in $\del T$.  \\
\\
If we assume there is an infinite line in $\del T$, we claim the period parameters satisfy $k_{1}k_{2} = 0$.  Let $Q$ be a marked tile of $T$.  $Q$ necessarily contains part of an infinite line segment.  This follows because by hypothesis there is another tile $Q'$ of $T$ which contains a line segment of the infinite line in $\del T$.  Because $T$ is a complete affine tiling, there is some element $\gamma \in \Z^{2}$ so that $\gamma Q' = Q$.  Because $\rho_{T}(\gamma)$ takes infinite lines to infinite lines, this means $Q$ must also have one of its boundary edges contained in an infinite line of $\del T$ because $Q'$ did.  \\
\\
Without loss of generality let us assume the marked edge $v_{1}$ of $Q$ is contained in this infinite line of $\del T$.  Then $\rho_{T}(e_{1})(v_{1})$ is parallel to $v_{1}$.  Because the linear holonomy of $L_{T}$ is parabolic, this means $v_{1}$ is parallel to the invariant vector field $X_{T}$.  By definition of the period parameter, this means $k_{1} = 0$ as claimed.  If instead the infinite line of $\del T$ contains another oriented edge of $Q$, the same argument shows either $k_{1} = 0$ or $k_{2} = 0$.  \\
\\
Finally, let us assume the period parameters satisfy $k_{1}k_{2} = 0$.  We claim there is a linear holonomy invariant direction contained in $\del T$.  As this is vacuously true for Euclidean tori, because every direction is preserved under the linear holonomy, assume our torus is non-Euclidean.  Then either $\int_{v_{1}} \omega_{T} = 0$ or $\int_{v_{2}} \omega_{T} = 0$ where $\omega_{T}$ is the canonical invariant one-form.  This means either the edge $v_{1}$ or $v_{2}$ is parallel to the flow generated by $X_{T}$, and thus, $[\pm X_{T}] \in \fD_{T}$.   
\end{proof}

Before proceeding to Theorem \ref{thm:goodtiles}, we introduce one more labeling on marked tiles that will be helpful later.  Recall as in Definition \ref{def:posneg_dir}, the canonical invariant one-form $\omega_{T}$ determines a decomposition $\fD = \fDnno \sqcup \fDnpo$ which the $\Z^{2}$-action on $\fD$ preserves.  The marking of the tile $Q$ determines the standard labeling $\{v_{1},v_{2},v_{3},v_{4}\}$ which determine the collection of points $\{[v_{1}],[v_{2}],[v_{3}],[v_{4}]\}$ in $\fD$.  Due to the invariance of the one-form, we have the following equalities below.
\begin{equation}\label{eq:edge_par}
k_{1} = \int_{v_{1}} \omega_{T} \phantom{=} k_{2} = \int_{v_{2}} \omega_{T}  \phantom{=} -k_{1} = \int_{v_{3}} \omega_{T} \phantom{=} -k_{2} = \int_{v_{4}} \omega_{T} 
\end{equation}
It therefore follows that two directions lie in $\fDnno$ and the other two lie in $\fDnpo$.  This induces an ordering on the edges $\{[v_{1}],[v_{2}],[v_{3}],[v_{4}]\}$ which is equivalent to starting from $[X_{T}]$ and traversing the circle counter-clockwise recording, in order, which directions we intersect.  It is possible that $[\pm X_{T}] = [v_{i}]$ which is determined by the condition that $k_{1}k_{2} = 0$ as in Lemma \ref{lem:k1k2zero}.  \\
\\
We label this ordering of directions by $\{[v_{1}^{+}], [v_{2}^{+}] ,[v_{1}^{-}],[v_{2}^{-}]\}$.  By definition $[v_{i}^{+}] \in \fDnno$ and $[v_{i}^{-}] \in \fDnpo$ and thus the ordering is invariant under the $\Z^{2}$-action on $\fD$.  This follows because $[v_{1}^{+}]$ comes before $[v_{2}^{+}]$ reading counter-clockwise from $[X_{T}]$, so for any $\gamma \in \Z^{2}$, we have $L_{k}(\gamma)[v_{1}^{+}]$ comes before $L_{k}(\gamma)[v_{2}^{+}]$ because the linear holonomy preserves orientation.  The same argument holds for the non-positive directions.  We call this ordering the \emph{$\omega_{T}$-ordering} of the edges and summarize it in the definition below.  Figure \ref{fig:omegatordering} illustrates the ordering.  
\begin{definition}\label{def:omegat}
Let $T$ be a non-Euclidean complete affine tiling with canonical vector field $X_{T}$ and one-form $\omega_{T}$.  Let $\{[v_{1}],[v_{2}],[v_{3}],[v_{4}]\}$ be the directions of the edges of the marked tile $Q$ under the standard labeling.  Starting from the direction $[X_{T}] \in \fD$, and traversing counter-clockwise, label these directions by $\{[v_{1}^{+}], [v_{2}^{+}] ,[v_{1}^{-}],[v_{2}^{-}]\}$.  This is the $\omega_{T}$-ordering of the edges.  We call $v_{i}^{+}$ and $v_{i}^{-}$ the \emph{non-negative} and \emph{non-positive} edges of $Q$ respectively.  
\end{definition}

\begin{figure}
\begin{center}
\begin{tikzpicture}[scale=3]

	\def\ang{35}
	\def\eps{5}

  	\draw[thick] (0,0) circle (1);
	
  	\draw[line width=4pt, blue]
    		({cos(\ang)},{sin(\ang)})
    		arc[
      			start angle=\ang,
      			end angle=180+\ang,
      			radius=1
    		];

  	\draw[line width=4pt, orange]
    		({cos(180+\ang)},{sin(180+\ang)})
    		arc[
      			start angle=180+\ang,
      			end angle=360+\ang,
      			radius=1
    		];
	
	\node[xshift=-30pt, yshift=+10pt] at ({cos(90+\ang)},{sin(90+\ang)}) {$\omega_{k}([v]) > 0$};
	\node[xshift=30pt, yshift=-10pt] at ({cos(270+\ang)},{sin(270+\ang)}) {$\omega_{k}([v]) < 0$};
		
  	\fill (0,0) circle (0.025);
	
  	\draw[dotted, line width=1pt, ->]
    		(0,0) -- ({cos(\ang)},{sin(\ang)});
		
  	\draw[dotted, line width=1pt, ->]
    		(0,0) -- ({cos(180+\ang)},{sin(180+\ang)});
		
  	\draw[line width=0.5pt, ->]
    		(0,0) -- ({cos(\ang+\eps)},{sin(\ang+\eps)});
		\node[xshift=20pt, yshift=20pt] at ({cos(\ang+\eps)},{sin(\ang+\eps)}) {$[v_{1}^{+}]$};
		
  	\draw[line width=0.5pt, ->]
    		(0,0) -- ({cos(\ang+30*\eps)},{sin(\ang+30*\eps)});
		\node[xshift=-20pt, yshift=+0pt] at ({cos(\ang+30*\eps)},{sin(\ang+30*\eps)}) {$[v_{2}^{+}]$};
		
  	\draw[line width=0.5pt, ->]
    		(0,0) -- ({cos(180+\ang+\eps)},{sin(180+\ang+\eps)});
    		\node[xshift=+0pt, yshift=-20pt] at ({cos(180+\ang+\eps)},{sin(180+\ang+\eps)}) {$[v_{1}^{-}]$};	
		
  	\draw[line width=0.5pt, ->]
    		(0,0) -- ({cos(180+\ang+25*\eps)},{sin(180+\ang+25*\eps)});
    		\node[xshift=+15pt, yshift=-10pt] at ({cos(180+\ang+25*\eps)},{sin(180+\ang+25*\eps)}) {$[v_{2}^{-}]$};

\end{tikzpicture}

\caption{A figure of the $\omega_{T}$-ordering of a quadrilateral whose oriented edges are directed towards the various directions illustrated.  The dotted lines depict the invariant directions of the linear holonomy with the blue and orange depicting the positive and negative directions respectively.  Note the labeling is done in the order $[v_{1}^{+}], [v_{2}^{+}], [v_{1}^{-}], [v_{2}^{-}]$ starting from $[X_{T}]$ and traveling $\fD$ counter-clockwise.  
}\label{fig:omegatordering}
\end{center}
\end{figure}

We conclude with the main theorem of this section which guarantees symplectic tiling billiards can be played on marked complete affine tilings that correspond to any pair of period parameters $k, K \in \MCQ$ in such a way that we do not have to worry about transversality issues.  Recall that two tilings $T_{A}$ and $T_{B}$ are said to be transverse if for any choice of edges $e_{A}$ and $e_{B}$, in $T_{A}$ and $T_{B}$ respectively, we have $e_{A}$ and $e_{B}$ are non-parallel.  Equivalently, we insist that $\fD_{T_{A}}, \fD_{T_{B}}$ are disjoint.  Our theorem below says that for any $k, K \in \MCQ$, we can find marked complete affine tilings which are transverse in a strong sense, and, their marked tiling representations define points in $\MCQ$ with parameters $k$ and $K$ respectively.  
\begin{theorem}\label{thm:goodtiles}
For any $k,K \in \MCQ$, there exist transverse marked complete affine tilings $T_{k}$ and $T_{K}$ of $\R^{2}$ whose marked tiling representations $\rho_{T_{k}}$ and $\rho_{T_{K}}$ determine the points $k, K \in \MCQ$ respectively.  If in addition $k = 0$ and $K \neq 0$, then the transverse tilings $T_{k}$ and $T_{K}$ can be chosen so that the invariant vector field directions $[\pm X_{T_{K}}]$ of the marked tiling representation are not contained in the edge directions $\fD_{T_{k}}$ of $T_{k}$.  If both $k, K \neq 0$, the transverse $T_{k}$ and $T_{K}$ can be chosen so that the invariant vector fields $X_{T_{k}}$ and $X_{T_{K}}$ are not parallel, and, $[\pm X_{T_{k}}] \notin \fD_{T_{K}}$ and $[\pm X_{T_{K}}] \notin \fD_{T_{k}}$.  
\end{theorem}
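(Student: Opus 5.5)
The plan is to exploit the freedom of conjugating by elements of $\Affp(2,\R)$: a point of $\MCQ$ is a conjugacy class of representations, so if $Q_{k}$ and $Q_{K}$ are the marked convex quadrilaterals built in the proof of Theorem \ref{thm:marked_tiles}, then for any $g \in \Affp(2,\R)$ the tiling $gT_{K}$ generated by $gQ_{K}$ under $g\rho_{K}g^{-1}$ is again a marked complete affine tiling representing the same point $K$. Translations are irrelevant to directions, so only the linear part $A = L(g) \in \GLp(2,\R)$ matters. I therefore fix $T_{k}$ and perturb $T_{K}$ by a rotation $A = R_{\theta}$, and show that all the required conditions fail only for a countable set of $\theta \in S^{1}$.

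First I record that the edge direction sets are countable. For a marked complete affine tiling $T$, every tile is $\gamma Q$, so $\fD_{T} = \{[\pm L_{T}(\gamma)v_{i}] : \gamma \in \Z^{2},\, i=1,\hdots,4\}$, which is countable (finite in the Euclidean case). Also $\fD_{R_{\theta}T_{K}} = R_{\theta}\fD_{T_{K}}$, and the canonical vector field transforms as $X_{R_{\theta}T_{K}} = \lambda R_{\theta}X_{T_{K}}$ for some $\lambda>0$. This holds by the uniqueness in Lemma \ref{lem:caninvs}, since $R_{\theta}$ preserves area, so the normalization $\int_{Q}d\Omega=1$ is kept and pushing the invariants forward gives invariants for the conjugated group.

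Next I identify the bad angles. Transversality fails iff $R_{\theta}[u] = [w]$ for some $[u] \in \fD_{T_{K}}$ and $[w] \in \fD_{T_{k}}$. For fixed $u,w$ exactly one $\theta$ does this, so the bad set is countable. In the case $k=0$, $K\neq 0$, we need $R_{\theta}[\pm X_{T_{K}}] \notin \fD_{T_{k}}$, which excludes countably many more $\theta$. In the case $k,K\neq 0$, we also need:
\begin{itemize}
\item $R_{\theta}X_{T_{K}}$ not parallel to $X_{T_{k}}$, which excludes two values of $\theta$;
\item $[\pm X_{T_{k}}] \notin R_{\theta}\fD_{T_{K}}$, which excludes countably many values;
\item $R_{\theta}[\pm X_{T_{K}}] \notin \fD_{T_{k}}$, which excludes countably many values.
\end{itemize}
A countable union of countable sets is countable, so some $\theta$ (indeed a full-measure set) avoids all of them. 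Setting $T_{k}$ as built and $T_{K} := R_{\theta}(\text{original})$ then gives the theorem.

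The main obstacle is the bookkeeping in the claim that $R_{\theta}T_{K}$ still represents the same point $K$ with the same marking. The marking edge $v_{1}$ must map to $R_{\theta}v_{1}$, and the tile to its left stays the tile to its left because $R_{\theta}$ preserves orientation. The marked tiling representation of Definition \ref{def:mtr} is then exactly $R_{\theta}\rho_{T_{K}}R_{\theta}^{-1}$, which is conjugate to $\rho_{K}$ and so gives the same point of $\CDT$ via the Baues--Goldman classification. I would verify this carefully. I would also check that the transformation rule for $X_{T}$ uses the uniqueness statement of Lemma \ref{lem:canform}/\ref{lem:caninvs}, which holds up to the positive normalization and so preserves directions. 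The rest is the countability argument above.
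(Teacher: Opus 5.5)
Your proposal is correct and follows the paper's strategy: conjugate $T_{K}$ by a rotation, which keeps the same point of $\MCQ$, and observe that the bad angles form a countable set while the available angles do not. The one difference is the case $k,K\neq 0$, where the paper first rotates and then applies a unipotent map fixing $X_{T_{K}'}$ to get transversality and $[\pm X_{T_{k}}]\notin\fD_{T_{K}'}$, whereas your single rotation handles all conditions at once, which is equally valid and somewhat cleaner.
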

\begin{proof}
We proceed by cases.  If both $k = K = 0$, the claim is clear.  For $T_{k}$, take any Euclidean square tiling and pick the anchor of its marked tile $Q_{T_{k}}$.  Let $T_{K}$ be a small rotation of $T_{k}$ about this anchor.  These tilings are thus transverse and clearly Euclidean, thus the period parameters of $T_{k}$ and $T_{K}$ are both zero.
\\  
\\
If $k = 0$ and $K \neq 0$, we can choose the tiling $T_{k}$ to be tiled by the unit square marked by the oriented edge vector $e_{1}$ with anchor at the origin, and $T_{K}$ to be any marked complete affine tiling whose period parameter is $K$.  Such a choice exists by Theorem \ref{thm:marked_tiles}.  The edge directions of $T_{k}$, $\fD_{T_{k}} = \{[\pm e_{1}], [\pm e_{2}]\}$, consists of four directions whereas $\fD_{T_{K}}$ is a countably infinite subset of $\fD$ which contains the two-point set $[\pm X_{T_{K}}]$ in its closure inside $\fD$.  If $\fD_{T_{k}} \cap (\fD_{T_{K}}\cup \{[\pm X_{T_{K}}]\}) = \emptyset$, then we are done.  The tilings are transverse, and, the invariant vector field $X_{T_{K}}$ is not contained in the edge directions $\fD_{T_{k}}$.  \\
\\
If the intersection is non-empty, apply a rotation $R_{\theta}$ centered about the anchor of the marked edge of $T_{K}$ so that the tiling $R_{\theta} T_{K}$ is transverse to $T_{k}$, and, so that the vector $R_{\theta} X_{T_{K}}$ is not parallel to either $e_{1}$ or $e_{2}$.  Such a rotation exists as both direction sets are countable and the possible choices of angles is uncountable.  The new tiling $T_{K}' := R_{\theta} T_{K}$ has period parameters equal to $K$, as the new marked tiling representation $\rho_{T_{K}'}$ equals $R_{\theta}  \rho_{T_{K}}  R_{\theta}^{-1}$, and thus determines the same point in $\MCQ$.  Moreover, its invariant vector field $X_{T_{K}'} = R_{\theta} X_{T_{K}}$ is not parallel to either $e_{1}$ or $e_{2}$ so $[\pm X_{T_{K}'}] \notin \fD_{T_{k}}$.  This new pair of tilings $T_{k}$ and $T_{K}'$ satisfy the claims of the theorem. 
\\
\\
Finally, if both $k, K \neq 0$, then pick any two tilings $T_{k}$ and $T_{K}$ whose associated marked tiling representations have period parameters $k$ and $K$ respectively.  Let $X_{T_{k}}$ and $X_{T_{K}}$ be their invariant vector fields.  If $X_{T_{k}}$ and $X_{T_{K}}$ are parallel, or $[\pm X_{T_{K}}]$ is in $\fD_{T_{k}}$, apply a small rotation $R_{\theta}$ about the anchor of the marked edge of $T_{K}$ so that the new invariant vector field $X_{T_{K}'} = R_{\theta}X_{T_{K}}$ of the tiling $T_{K}' := R_{\theta}T_{K}$ is not parallel to $X_{T_{k}}$, and, not contained in $\fD_{T_{k}}$.  As seen in the previous paragraph, such a choice of $\theta$ exists as the number of possible rotations is uncountable, whereas $\fD_{T_{k}} \cup \{[\pm X_{T_{k}}]\}$ is countable.  The new tiling $T_{K}'$ still has the same period parameters as noted in the previous paragraph.  
\\
\\
By construction, $\fD_{T_{k}} \cup \{[\pm X_{T_{k}}]\}$ and $\{[\pm X_{T_{K}'}]\}$ are disjoint.  Thus it suffices to ensure that $(\fD_{T_{k}}\cup \{[\pm X_{T_{k}}]\}) \cap \fD_{T_{K}'} = \emptyset$.  We aim to change the tiling of $T_{K}'$ so that this is true.  Consider a unipotent subgroup of $\Affp(2,\R)$ which fixes the anchor of the marked edge of $T_{K}'$, and, the direction $X_{T_{K}'}$.  This subgroup of $\Affp(2,\R)$ is isomorphic to $\R$, and thus uncountable.  The sets $\fD_{T_{k}}\cup \{[\pm X_{T_{k}}]\}$ and $\fD_{T_{K}'}$ are both countable and this unipotent subgroup acts simply transitively on both the positive and negative directions of $\fD_{T_{K}'}^{+}$ and $\fD_{T_{K}'}^{-}$ determined by $\omega_{T_{K}'}$ while fixing $[\pm X_{T_{K}'}]$.  By the same uncountability argument, there exists a unipotent which preserves $X_{T_{K}'}$ and takes $\fD_{T_{K}'}$ to a subset disjoint from $\fD_{T_{k}} \cup \{[\pm X_{T_{k}}]\}$.  Applying this choice of unipotent to $T_{K}'$ then yields the desired pair of tilings thus completing our proof.
\end{proof}


\section{Divergent Orbits}\label{sec:divorbs}
In this last section we analyze some of the trajectories of symplectic tiling billiards in the context of playing on marked complete affine tilings which arise as guaranteed by Theorem \ref{thm:goodtiles}.  We begin with remarks on the shapes of marked tiles as one goes out `far' in the tiling and introduce a definition of what it means for a tile to be `thin'.  We also make precise the definition of divergence we will use in our context that is adapted to the affine geometry at hand.  The last part of the section will mostly be dedicated to proving the existence of divergent trajectories when the game is played on the pair $(T_{k},T_{K})$ where $T_{k}$ is a Euclidean tiling and $T_{K}$ is a non-Euclidean tiling.  These arguments will be carried out in detail for this case, and then later generalized to the case where both $T_{k}$ and $T_{K}$ are non-Euclidean with slightly less detail as the principal arguments remain the same.  Illustrations and experimentations will be addressed to both support, and guide, our arguments.

\subsection{Positively $P$-thin Quadrilaterals}\label{ssec:pthin}
To begin, we introduce a way to measure divergence that is compatible with the canonical invariants guaranteed by Lemma \ref{lem:caninvs}.  This notion of divergence is stated in terms of the invariant oriented foliation by flow lines parallel to $X_{T_{K}}$ as seen in Figure \ref{fig:foliate}.  
\begin{definition}\label{def:divpath}
Let $\alpha: [0,\infty) \lra T_{K}$ be an infinite continuous path in the marked complete affine tiling $T_{K}$ of $\R^{2}$ with period parameter $K$.  We say that $\alpha$ diverges \emph{positively} along the canonical parallel invariant one-form $\omega_{T_{K}}$ if $\lim_{t\to\infty} \int_{\alpha_{t}} \omega_{T_{K}} = \infty$ where $\alpha_{t}$ denotes the image of $\alpha$ on $[0,t]$.  Similarly we say $\alpha$ diverges negatively along $\omega_{T_{K}}$ if $\lim_{t\to\infty} \int_{\alpha_{t}} \omega_{T_{K}} = -\infty$
\end{definition}

The main result of this paper is to prove the existence of configurations of this game for tilings $(T_{k},T_{K})$ as in Theorem \ref{thm:goodtiles} for which the orbits diverge, and, show these configurations are stable.  We split the argument into two types: when $(T_{k},T_{K})$ is Euclidean and non-Euclidean, and, when both are non-Euclidean.  It is possible to combine these arguments into a single case, however, in the interest of clarity, we consider them separately.  In what follows for now, one should think of $T_{k}$ as the standard Euclidean square tiling whose edges are parallel to the $x$ and $y$ axes and $T_{K}$ as a non-Euclidean tiling whose canonical vector field $X_{T_{K}}$ is not parallel to either the $x$ or $y$ axes, and, that the tilings are transverse.  Such tilings exist by Theorem \ref{thm:goodtiles}.  
\\
\\
The main insight into the existence of divergent orbits is as follows.  For a non-Euclidean marked complete affine tiling, if $\alpha$ is a positively divergent trajectory relative to $\omega_{T_{K}}$, then the corresponding tiles that $\alpha$ goes through must become very thin.  In fact, their oriented edges limit on directions determined by either $[\pm X_{T_{K}}]$.  This can be appreciated in Figure \ref{fig:limiting} and Figure \ref{fig:almostparallel} below.  This limiting behavior is a consequence of the parabolic linear holonomy of the tiling and we make this observation precise in the following lemma.  

\begin{figure}
	\begin{minipage}{0.3\textwidth}
		\includegraphics[width=\textwidth]{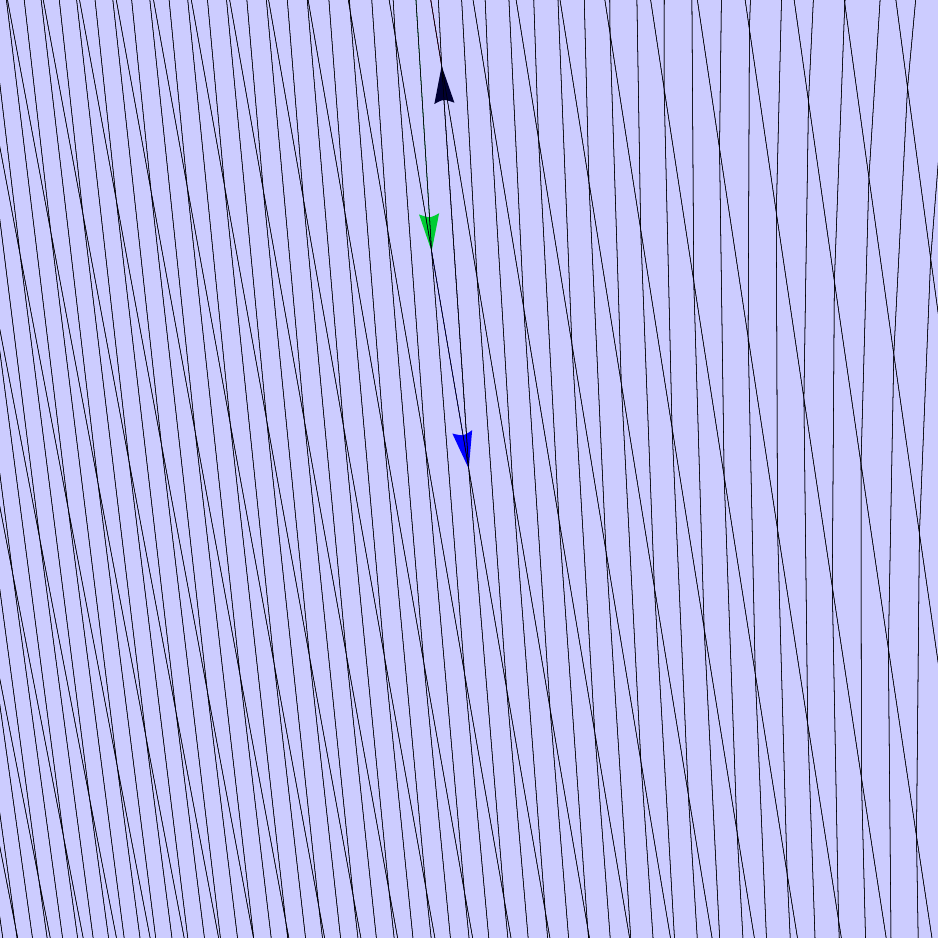}
	\end{minipage}
	\hfill
	\begin{minipage}{0.3\textwidth}
		\includegraphics[width=\textwidth]{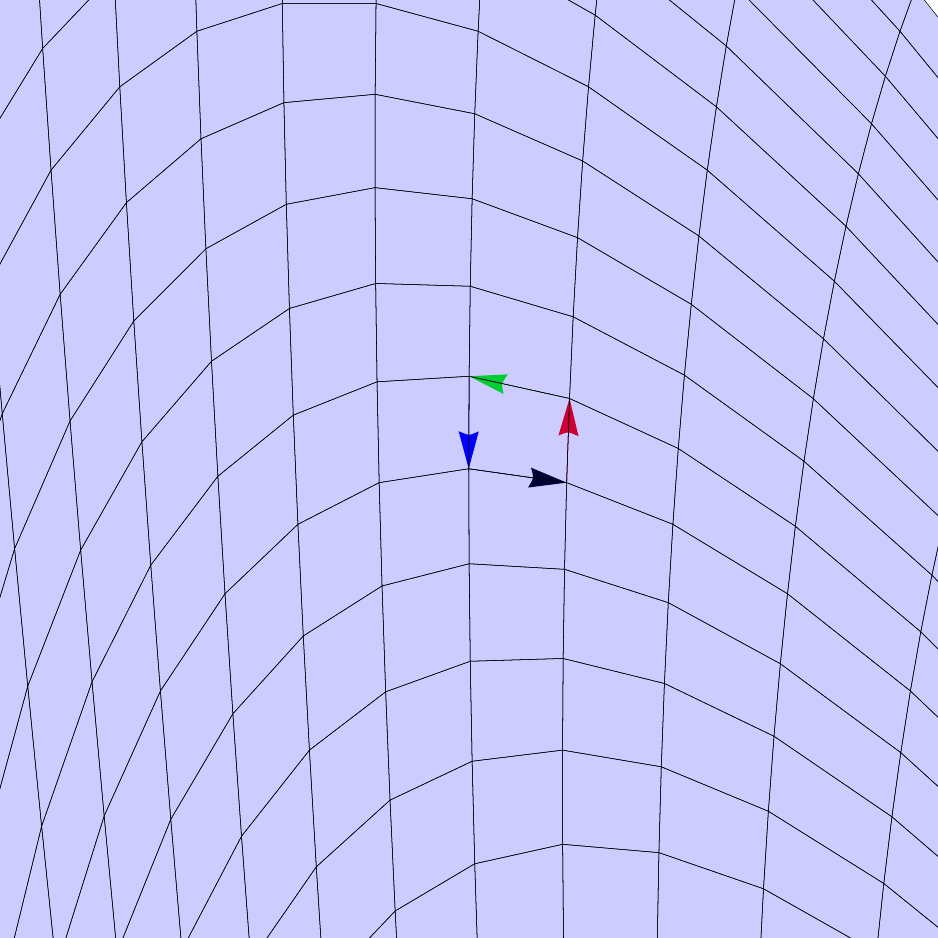}
	\end{minipage}
	\hfill
	\begin{minipage}{0.3\textwidth}
		\includegraphics[width=\textwidth]{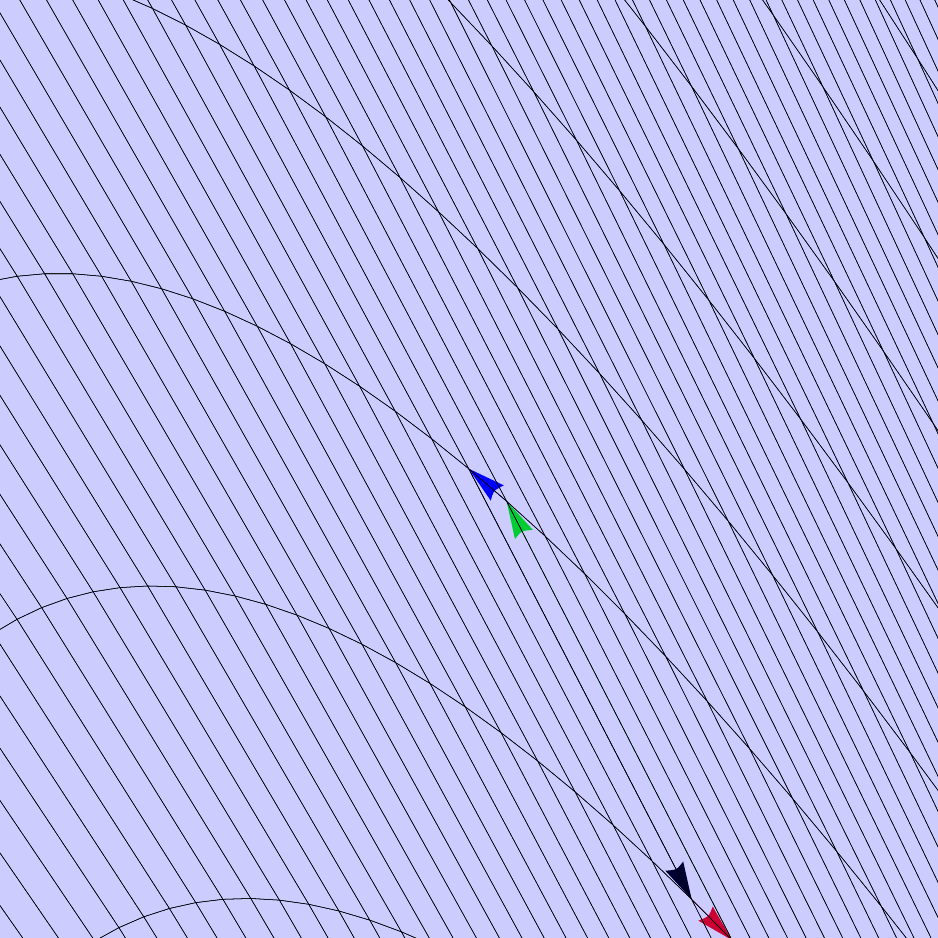}
	\end{minipage}
	\caption{Some views of a marked tiling corresponding to period parameters $K = (2/3,1/5)$.  The middle figure shows the marked tile $Q$.  The left figure shows that same tiling at the same scale moved $15$ tiles to the left where the image of $Q$ is marked.  The right figure shows the middle figure moved $15$ tiles to the right and the image of $Q$ is marked.  The tilings become appreciably thinner and the edges become more parallel limited in the direction determined by $[(1/5,-2/3)] \in \fD$.}\label{fig:limiting} 
\end{figure}

\begin{figure}
\includegraphics[scale=0.4]{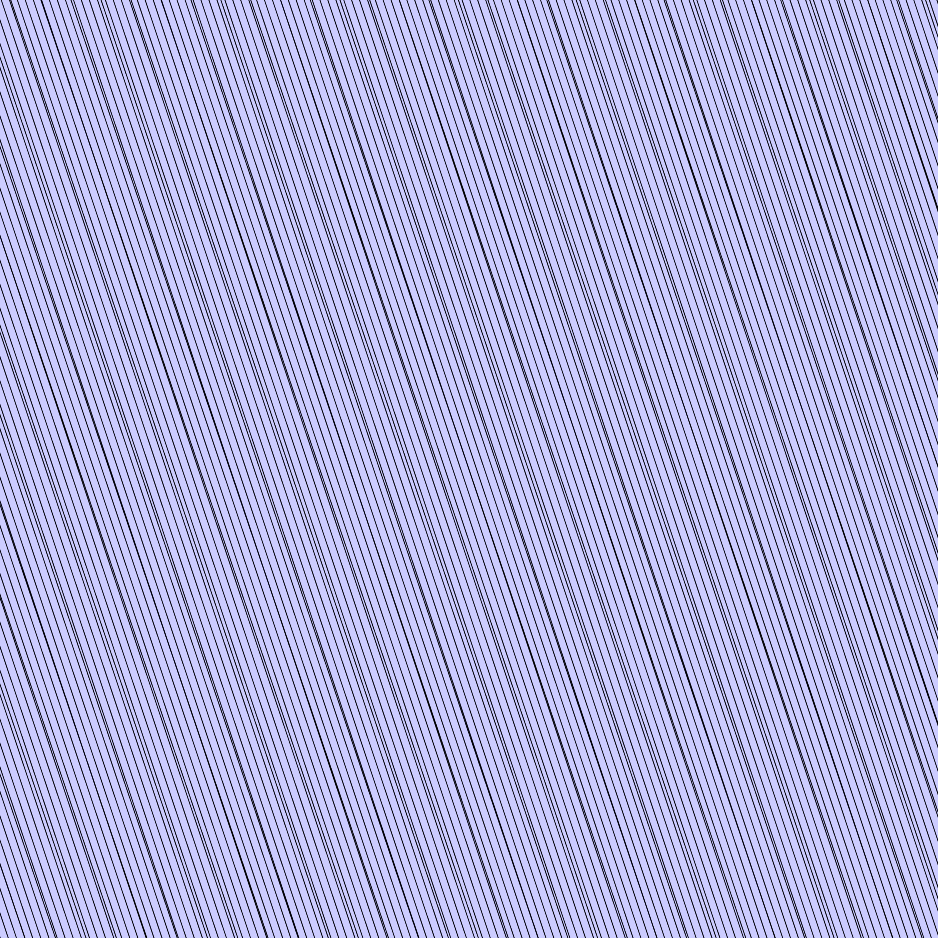}
\caption{A view of the tiling in Figure \ref{fig:limiting} with period parameter $K = (2/3,1/5)$.  These tiles are about $100$ tiles to the right of the original middle marked tiling from that figure.  The oriented edges are all approaching a direction parallel either $[\pm(1/5,-2/3)] = [\pm X_{T_{K}}]$.}
\label{fig:almostparallel}
\end{figure}
%
%

\begin{lemma}\label{lem:getthin}
Let $T_{K}$ be a marked complete affine tiling of $\R^{2}$ with canonical parallel invariant vector field and one-form $X_{T_{K}}$ and $\omega_{T_{K}}$.  Let $\rho_{T_{K}}: \Z^{2} \lra \Affp(2,\R)$ denote the marked tiling representation of $T_{K}$, and denote $L_{T_{K}}(\gamma)$ and $T_{T_{K}}(\gamma)$ the linear and translational parts of $\rho_{T_{K}}(\gamma)$.  \\
\\
For any sequence of $\omega_{T_{K}}$-positive loops $\gamma_{n} \in \Z^{2}$ for which $\lim_{n\to\infty} \omega_{T_{K}}(T_{T_{K}}(\gamma_{n})) = \infty$, we have that $\lim_{n\to\infty} L_{T_{K}}(\gamma_{n})([v]) = [X_{T_{K}}]$ if $[v] \in \fDnnoK$ and $\lim_{n\to\infty} L_{T_{K}}(\gamma_{n})([v])= [-X_{T_{K}}]$ if $[v] \in \fDnpoK$ where here $\fD = \fDnnoK\sqcup \fDnpoK$ is the decomposition of the circle of directions $\fD$ into non-negative and non-positive directions as determined by $\omega_{T_{K}}$.  
\end{lemma}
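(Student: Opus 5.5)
Conjugating by an element of $\Affp(2,\R)$, I will assume $\rho_{T_{K}} = \rho_{K}$ is in the normal form of Equation \ref{eq:hol_rep}. The statement is conjugation invariant: conjugation carries $X$, $\omega$, the decomposition $\fD = \fDnnoK \sqcup \fDnpoK$, and the linear holonomy equivariantly, and the canonical invariants of Lemma \ref{lem:caninvs} are unique. In these coordinates $\omega_{T_{K}} = K_{1}dx + K_{2}dy$ and $X_{T_{K}} = K^{\perp}$ up to the sign convention of Equation \ref{eq:parvec}. The first step is to compute $\omega_{K}(T_{K}(\gamma))$. Since $\omega_{K}(K^{\perp}) = 0$, Equation \ref{eq:hol_rep} gives $\omega_{K}(T_{K}(\gamma)) = K\cdot\gamma$. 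So the hypothesis says exactly that $s_{n} := K\cdot\gamma_{n} \to +\infty$.

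The second step is to write the linear part as a shear: $L_{K}(\gamma)(v) = v - s\,(K\cdot v)\,K^{\perp}$ with $s = K\cdot\gamma$. Every $v \neq 0$ decomposes as $v = aK + bK^{\perp}$, where $K\cdot v = a|K|^{2}$ and $a$ has the sign of $\omega_{K}(v)$. Then
\[
L_{K}(\gamma_{n})v = aK + \big(b - s_{n} a |K|^{2}\big)K^{\perp}.
\]
If $a \neq 0$, dividing by $s_{n}|a||K|^{2} > 0$ (which preserves directions) gives
\[
\frac{1}{s_{n}|a||K|^{2}}\,L_{K}(\gamma_{n})v \longrightarrow -\operatorname{sgn}(a)\,K^{\perp}.
\]
So directions with $\omega_{K}(v) > 0$ converge to one invariant direction, and directions with $\omega_{K}(v) < 0$ converge to the antipodal one. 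If $a = 0$, then $[v] = [\pm X]$ is fixed, and membership in $\fDnnoK$ or $\fDnpoK$ is decided by Definition \ref{def:posneg_dir}, which is consistent with the limit.

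The main obstacle is the sign bookkeeping. I must check that $-K^{\perp}$ is in fact the direction called $X_{T_{K}}$ in Equation \ref{eq:parvec}, namely $(K_{2}, -K_{1}) = -K^{\perp}$. It is, so positive directions go to $[X_{T_{K}}]$ and negative ones to $[-X_{T_{K}}]$, as claimed. I should also confirm that this agrees with the paper's convention that positive loops move directions clockwise.

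A further point is whether the paper's $\omega_{T}$, normalized via the fundamental-domain area in Lemma \ref{lem:caninvs}, carries the same sign as $K_{1}dx + K_{2}dy$. Positive rescaling is harmless, so I only need that the orientation conventions of $d\Omega$ and $X$ match Equation \ref{eq:parone}, which I would verify directly. Finally, convergence in $\fD$ means convergence of normalized representatives, and that is exactly what the displayed limit provides.
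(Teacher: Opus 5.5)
Your proposal is correct and follows the paper's route: conjugate to the normal form $\rho_{K}$, use $\omega_{K}(T_{K}(\gamma)) = K\cdot\gamma$, and read off the limit of the shear. Your decomposition $v = aK + bK^{\perp}$ makes precise the paper's remark that ``the term $K\cdot\gamma_{n}$ tends to infinity.'' The one step you assert rather than check is that the \emph{hypothesis} is conjugation invariant, and it needs a check because it involves translational parts, which do not transform equivariantly. If $g\,\rho_{T_{K}}(\gamma)\,g^{-1} = \rho_{K}(\gamma)$, then $T_{K}(\gamma) = L(g)T_{T_{K}}(\gamma) + (\id - L_{K}(\gamma))T(g)$, and the correction term is a multiple of $K^{\perp} \in \ker\omega_{K}$, so $\omega_{K}(T_{K}(\gamma_{n})) = \omega_{T_{K}}(T_{T_{K}}(\gamma_{n}))$; this is exactly the verification the paper carries out at the end of its proof.
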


\begin{proof}
To simplify notation, let us simply denote the tiling by $T$ instead of $T_{K}$.  Because $T \in \MCQ$, there exists an affine transformation $g \in \Affp(2,\R)$ for which $\rho_{T}$ and $\rho_{K}$ are conjugate via $g$ where $\rho_{K}$ is the holonomy representation as defined in Equation \ref{eq:hol_rep}.  That is, for all $\gamma \in \Z^{2}$ we have the equalities $g\circ \rho_{T}(\gamma) \circ g^{-1} = \rho_{K}(\gamma)$.  We thus have the following equalities in linear and translational terms
\begin{equation}\label{eq:lintraneq}
L(g)\circ L_{T}(\gamma) \circ L(g)^{-1} = L_{K}(\gamma) \text{ and } L(g)T_{T}(\gamma) + \left(\text{id} - L_{K}(\gamma)\right)T(g) = T_{K}(\gamma)
\end{equation}
Next observe by Equation \ref{eq:hol_rep} and Equation \ref{eq:parone} that for any $\gamma_{n} \in \Z^{2}$ we have the equality below.
\begin{equation}\label{eq:dotequals}
\omega_{K}(T_{K}(\gamma_{n})) = K\cdot \gamma_{n}
\end{equation}
Thus, for any sequence $\gamma_{n} \in \Z^{2}$ of $\omega_{K}$-positive loops for which $\lim_{n\to\infty}\omega_{K}(T_{K}(\gamma_{n}))= \infty$, we can see from Equation \ref{eq:hol_rep} that $\lim_{n\to\infty}L_{K}(\gamma_{n})[v] = [-K^{\perp}]$ for any $\omega_{K}$-non-negative direction $[v]$ because the term $k\cdot \gamma_{n}$ in Equation \ref{eq:hol_rep} tends to infinity by Equation \ref{eq:dotequals}.  As seen in Equation \ref{eq:parvec}, $-K^{\perp}$ and $X_{K}$ determine the same direction in $\fD$.  Thus, if we fix a direction $[v] \in \fDnnK$, and have a sequence $\gamma_{n} \in \Z^{2}$ of $\omega_{K}$-positive loops for which $\lim_{n\to\infty}\omega_{K}(T_{K}(\gamma_{n}))= \infty$, then $\lim_{n\to\infty}L_{K}(\gamma_{n})[v] = [X_{K}]$.  The analogous statement holds for non-positive directions.  \\
\\
We now use the conjugation relations in Equation \ref{eq:lintraneq} to obtain our result.  Let $[v]$ be an $\omega_{T}$-non-negative direction and $\gamma_{n} \in \Z^{2}$ be a sequence of $\omega_{T}$-positive loops for which $\lim_{n\to\infty}\omega_{T}(T_{T}(\gamma_{n}))= \infty$.  We aim to show that $\lim_{n\to\infty}L_{T}(\gamma_{n})[v] = [X_{T}]$.  To this end, because $[v]$ is $\omega_{T}$-non-negative, it follows because $\omega_{T} = L(g)^{*}\omega_{K} = \omega_{K}\circ L(g)$, that $L(g)[v]$ is $\omega_{K}$-non-negative.  Thus if $\gamma_{n}$ is also a sequence of $\omega_{K}$-positive loops whose pairing diverges to infinity, then by the previous paragraph, we would have the following equalities.  
\begin{equation*}
\lim_{n\to\infty}(L(g)\circ L_{T}(\gamma_{n}))([v])  = \lim_{n\to\infty} (L_{K}(\gamma_{n})\circ L(g))([v]) = [X_{K}] = [L(g)X_{T}]
\end{equation*}
The outer-most equalities would then imply the desired result, $\lim_{n\to\infty} L_{T}(\gamma_{n})([v]) = [X_{T}]$.  Thus, we only need to show that $\gamma_{n}$ is also a sequence of $\omega_{K}$-positive loops diverging to infinity.  This follows from the equations below using Equation \ref{eq:lintraneq} and Equation \ref{eq:hol_rep}.  
\begin{align*}
\lim_{n\to\infty} \omega_{K}(T_{K}(\gamma_{n})) &= \lim_{n\to\infty} \omega_{K}\left(L(g)T_{T}(\gamma_{n}) + \left(\text{id} - L_{K}(\gamma_{n})\right)T(g)\right) = \lim_{n\to\infty} \omega_{K}(L(g)T_{T}(\gamma_{n}))\\
&= \lim_{n\to\infty} \omega_{T}(T_{T}(\gamma_{n}))
\end{align*}
\end{proof}

By Lemma \ref{lem:getthin} if we move far out in our marked tiling along tiles $\gamma_{n} Q$ where $\lim_{n \to\infty} \omega_{T_{K}}(T_{T_{K}}(\gamma_{n}))$ $= \infty$, this means our tiles will look very thin.  We now introduce a definition that allows us to define thinness of these quadrilaterals relative to another one.    

\begin{definition}\label{def:pthin}
Let $P$ be an unoriented quadrilateral in $\R^{2}$ and $T_{K}$ be a complete affine tiling with invariant parallel area form $d\Omega_{T_{K}}$ and one-form $\omega_{T_{K}}$.  We say a marked tile $Q$ of $T_{K}$ is \emph{$\omega_{T_{K}}$-positively $P$-thin} if there exists a choice of positive directions $[d_{1}],\hdots, [d_{4}] \in \fD_{\omega_{T_{K}}}^{+} \subset \fD$ representing the edges of $P$ for which $d\Omega_{T_{K}}(w_{j}^{+},d_{i}) > 0$ for each non-negative edge $w_{j}^{+}$ of $Q$ and all $i = 1, \hdots, 4$, and, $d\Omega_{T_{K}}(w_{j}^{-},d_{i}) < 0$ for each non-positive edge $w_{j}^{-}$ of $Q$ and all $i = 1 ,\hdots, 4$.  
\end{definition}
Note that because there is only one parallel area form compatible with the standard orientation of $\R^{2}$ up to positive scalar multiple, Definition \ref{def:pthin} is the same if we replace $d\Omega_{T_{K}}$ with the determinant which we will work with instead.  Another way of saying Definition \ref{def:pthin} is to say that we may choose $\omega_{T_{K}}$-positive representatives of the edges of $P$ so that for each oriented edge of $Q$, all the representatives lie on one side of the edge, or, the other.  Figure \ref{fig:pthin} and Figure \ref{fig:notpthin} below illustrate this concept when $P$ is a square.

\begin{figure}
\includegraphics[scale=0.35]{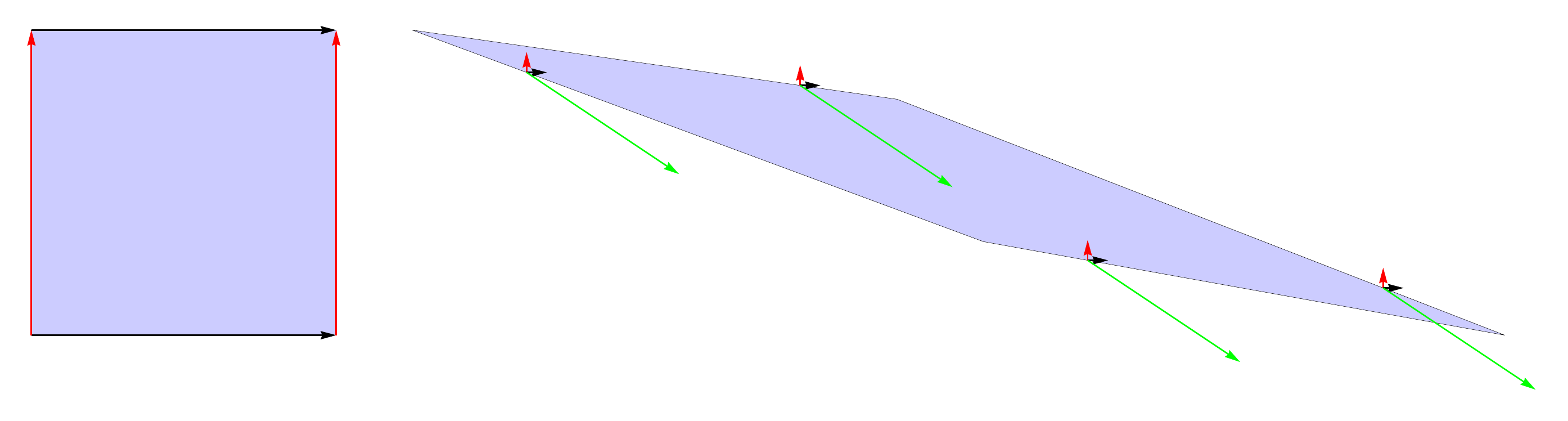}
\caption{An example of a positively $P$-thin marked quadrilateral $Q$ which is a fundamental domain for the affine torus with period parameters $K = (2/5,4/5)$.  Its invariant vector field $X_{K}$ is shown in green to illustrate our choices of directions are indeed $\omega_{K}$-positive.  We choose fixed positive representative directions of the edges of $P$, in red and black, so that for each edge of $Q$, all these directions lie on one side of the edge, or, the other.  
}\label{fig:pthin}
\end{figure}

\begin{figure}
\includegraphics[scale=0.4]{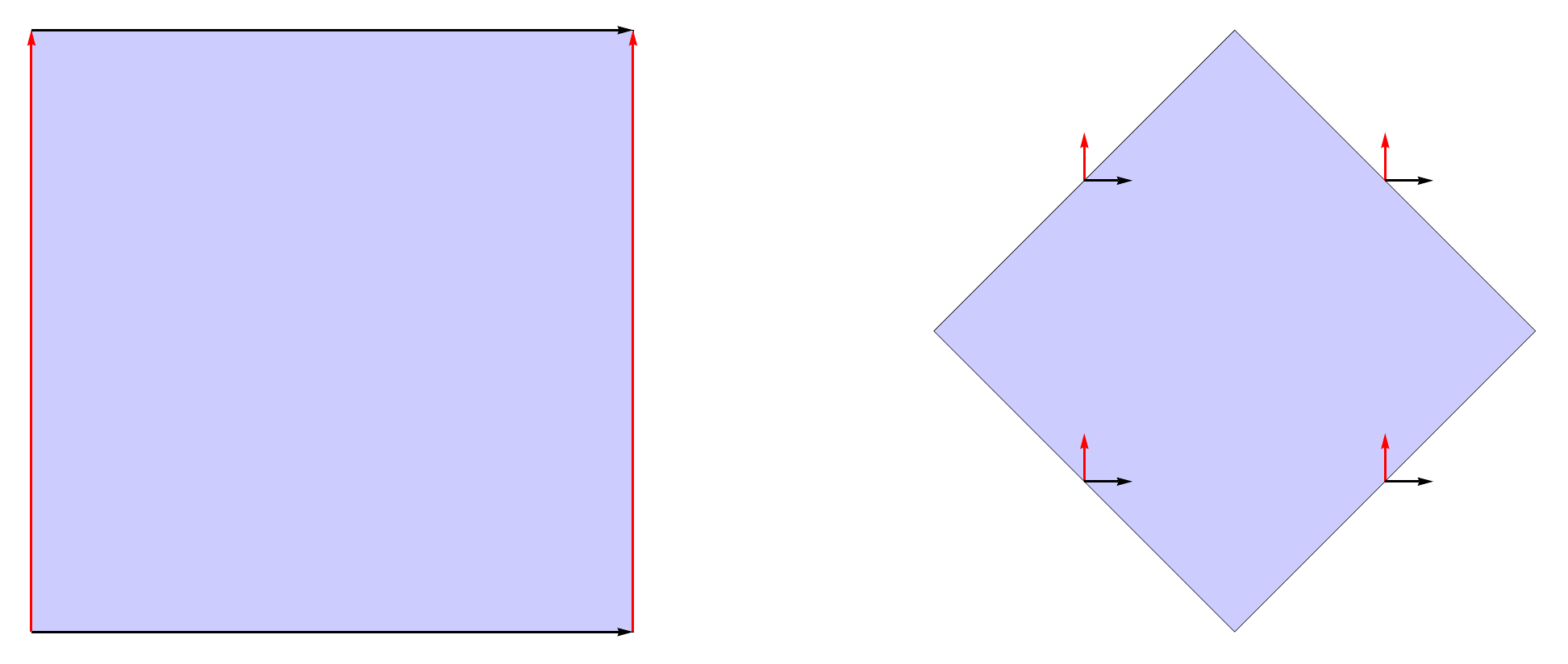}
\caption{A non-example of a positively $P$-thin marked quadrilateral $Q$.  There is no choice of positive representative directions of the edges of $P$ so that for each edge of $Q$, all these directions lie on one side of the edge, or, the other.  
}\label{fig:notpthin}
\end{figure}

\subsection{Divergence}\label{ssec:div}
With these preliminaries established, we specialize to the case where we consider a pair of Euclidean and non-Euclidean marked tilings $(T_{k},T_{K})$.  As mentioned in Section \ref{sec:intro}, the dynamics of this game are invariant under simultaneous $\Affp(2,\R)$-symmetries of the tilings.  That is, there is an orbit equivalence of the dynamics of the game played on $(T_{k},T_{K})$ and $(gT_{k},gT_{K})$ where $g$ is any affine preserving symmetry.  Thus, without loss of generality, we may assume that $T_{k}$ is the marked affine tiling given by the unit square anchored at the origin and marked by the edge $e_{1}$.  Denote this marked square by $P$ and observe the marked tiling representations act by translation to the right one unit by $e_{1}$ and translation up one unit by $e_{2}$.  We now prove for any other non-Euclidean tiling $T_{K}$, we can find positively $P$-thin tiles where the nearby tiles also remain $P$-thin.

\begin{lemma}\label{lem:sqthinexists}
Let $T_{k}$ be the standard marked Euclidean tiling and $T_{K}$ be a marked non-Euclidean tiling whose canonical vector field $X_{T_{K}}$ is not parallel to either $e_{1},e_{2}$, and the tilings $T_{k}$ and $T_{K}$ are transverse.  Let $P$ be the square marked by $T_{k}$.  Then there exists a positively $P$-thin marked tile $Q$ of $T_{K}$.  Moreover, for any integer $N > 0$, one can choose $Q$ so that $\gamma Q$ is still $P$-thin for all $\gamma = (n,m)$ with $|n| + |m| < N$.  
\end{lemma}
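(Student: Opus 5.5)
We use Lemma \ref{lem:getthin} to push the edge directions of a tile far out in the tiling arbitrarily close to $[\pm X_{T_{K}}]$, and then observe that positive $P$-thinness is an open condition on these edge directions. Throughout we write $X = X_{T_{K}}$ and $\omega = \omega_{T_{K}}$, and we replace $d\Omega_{T_{K}}$ by $\det$ as remarked after Definition \ref{def:pthin}.

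\emph{Choosing the directions $[d_i]$.} The edges of $P$ are parallel to $e_{1}$ and $e_{2}$. Since $X$ is parallel to neither, we have $\omega(e_{1}) \neq 0$ and $\omega(e_{2}) \neq 0$. Among $\pm e_{1}$ and $\pm e_{2}$, let $d_{1}, d_{3}$ be the $\omega$-positive representatives of the two horizontal edges and $d_{2}, d_{4}$ those of the two vertical edges. Thus each $[d_i]$ lies in the open positive half-circle $\fD^{+}_{\omega}$, strictly away from its endpoints $[\pm X]$. Since each $\omega(d_i) > 0$ and $\det(X, d_i) = \omega(d_i)$, we get $\det(X, d_i) > 0$ for all $i$. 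Consequently $\det(-X, d_i) < 0$ for all $i$.

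\emph{Producing the tile $Q$.} Pick $\gamma_{0}$ with $K\cdot\gamma_{0} > 0$; such a $\gamma_{0}$ exists since $K \neq 0$. Set $\gamma_{n} = n\gamma_{0}$. By Equation \ref{eq:lintraneq}, $\omega(T_{T_{K}}(\gamma_{n}))$ differs from $K\cdot\gamma_{n}$ by a term that vanishes in the limit, so $\omega(T_{T_{K}}(\gamma_{n})) \to \infty$. Lemma \ref{lem:getthin} then applies to the sequence $\gamma_{n}$.

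The edges of $\gamma_{n} Q_{0}$ are $L_{T_{K}}(\gamma_{n})v_{i}$, where $Q_{0}$ is the marked tile. By Equation \ref{eq:edge_par}, their $\omega$-values are $\pm K_{1}$ and $\pm K_{2}$, so the non-negative and non-positive labels are preserved. By Lemma \ref{lem:getthin}, the directions of the non-negative edges converge to $[X]$ and those of the non-positive edges converge to $[-X]$. By continuity of $\det$ on directions, for $n$ large we get $\det(w_{j}^{+}, d_i) > 0$ and $\det(w_{j}^{-}, d_i) < 0$ for all $i, j$. Hence $Q := \gamma_{n} Q_{0}$ is positively $P$-thin.

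\emph{The ``moreover'' part.} There are finitely many $\gamma$ with $|n|+|m| < N$. For each of them, $\gamma\gamma_{n} Q_{0}$ has linear edge-part $L_{T_{K}}(\gamma+\gamma_{n})v_i$, and
\[
\omega(T_{T_{K}}(\gamma+\gamma_{n})) = K\cdot\gamma + K\cdot\gamma_{n} + o(1) \to \infty .
\]
So Lemma \ref{lem:getthin} applies to each of these finitely many sequences. Taking $n$ larger than the maximum of the finitely many thresholds gives $P$-thinness of $\gamma Q$ for all such $\gamma$ simultaneously.

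\emph{Main obstacle.} The delicate step is the bookkeeping between $\omega(T_{T_{K}}(\gamma))$ and $K\cdot\gamma$. The correction term $\omega_{K}((\mathrm{id}-L_{K}(\gamma))T(g))$ appearing in Equation \ref{eq:lintraneq} must be shown to be bounded. We plan to handle it by noting that $\omega_{K}$ is $L_{K}$-invariant, so $\omega_{K}\circ(\mathrm{id}-L_{K}(\gamma)) = 0$ and the term vanishes identically; this is the same computation used at the end of the proof of Lemma \ref{lem:getthin}.

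A secondary point is the degenerate case $K_{1}K_{2} = 0$, where some edge of $Q_{0}$ is already parallel to $\pm X$. There the corresponding inequality holds outright, since $\det(\pm X, d_i) = \pm\omega(d_i)$ has the required sign, and the argument above is unaffected.
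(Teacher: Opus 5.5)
Your proof is correct and follows essentially the same route as the paper: fix the $\omega_{T_{K}}$-positive representatives of $\pm e_{1},\pm e_{2}$, push a tile far out along loops with $\omega_{T_{K}}(T_{T_{K}}(\gamma_{n}))\to\infty$ so that Lemma \ref{lem:getthin} drives its non-negative and non-positive edge directions to $[X_{T_{K}}]$ and $[-X_{T_{K}}]$, and then read off the determinant inequalities. For the ``moreover'' clause, you use the exact identity $\omega_{T_{K}}(T_{T_{K}}(\gamma))=K\cdot\gamma$ and apply Lemma \ref{lem:getthin} to each of the finitely many shifted sequences $\gamma+\gamma_{n}$, which is more careful than the paper's remark about positive loops only.
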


\begin{proof}
By hypothesis $[X_{T_{K}}] \neq [\pm e_{1}], [\pm e_{2}]$ in the circle of directions $\fD$.  Thus we can find a choice of $[d_{1}] := [\pm e_{1}]$ and $[d_{2}] := [\pm e_{2}]$ which are both positive with respect to $\omega_{T_{K}}$, that is $[d_{1}], [d_{2}] \in \fD_{\omega_{K}}^{+}$.  By definition of $\omega_{T_{K}}$, we have that $\omega_{T_{K}}(v) :=  d\Omega_{T_{K}}(X_{T_{K}},v)$ where $d\Omega_{T_{K}}$ is the parallel invariant area-form as defined in Lemma \ref{lem:caninvs}.  As mentioned after Definition \ref{def:pthin}, $d\Omega_{T_{K}}$ equals $\det$ up to positive scale thus $v$ is $\omega_{T_{K}}$-positive if and only if $\det(X_{T_{K}},v) > 0$.  That is to say $(X_{T_{K}},v)$ is an oriented basis of $\R^{2}$.  
\\
\\
Consider the marked tile $Q$ of $T_{K}$.  By choosing a sequence $\gamma_{n} \in \Z^{2}$ so that $\omega_{T_{K}}(T_{T_{K}}(\gamma_{n}))$ tends to infinity, we may make the directions of the edges of $\gamma_{n}Q$, $[w_{1}],\hdots, [w_{4}]$, arbitrarily close to $[\pm X_{T_{K}}]$ in the circle of directions $\fD$ by Lemma \ref{lem:getthin}.  Thus, we may pick $\gamma_{n}$ so that the non-negative directions $[w_{1}^{+}], [w_{2}^{+}]$ of $\gamma_{n}Q$ lie between both $[X_{T_{K}}]$ and $[d_{1}]$, and, both $[X_{T_{K}}]$ and $[d_{2}]$ in $\fD$ in addition to having the non-positive directions $[w_{1}^{-}], [w_{2}^{-}]$ lying between both $[-X_{T_{K}}]$ and $[-d_{1}]$, and, both $[-X_{T_{K}}]$ and $[-d_{2}]$.  Figure \ref{fig:getsmall} illustrates this argument.    
\\
\begin{figure}
    \begin{minipage}{0.45\textwidth}
 	   \begin{center}
            \begin{tikzpicture}[scale=2]
            
            	\def\ang{35}
            	\def\eps{5}
            
            	\draw[thick] (0,0) circle (1);
            	
            	\draw[line width=4pt, blue]
            		({cos(\ang)},{sin(\ang)})
            		arc[
            			start angle=\ang,
            			end angle=180+\ang,
            			radius=1
            		];
            
            	\draw[line width=4pt, orange]
            		({cos(180+\ang)},{sin(180+\ang)})
            		arc[
            			start angle=180+\ang,
            			end angle=360+\ang,
            			radius=1
            		];
            		
            	\fill (0,0) circle (0.025);
            	
            	\draw[dotted, line width=1pt, ->]
            		(0,0) -- ({cos(\ang)},{sin(\ang)});
            		
            	\draw[dotted, line width=1pt, ->]
            		(0,0) -- ({cos(180+\ang)},{sin(180+\ang)});
		
            	\draw[dashed, line width=1pt, ->]
            		(0,0) -- ({cos(90)},{sin(90)});
			\node[xshift=0pt, yshift=20pt] at ({cos(90)},{sin(90)}) {$[e_{2}] = [d_{1}]$};
            		
            	\draw[dashed, line width=1pt, ->]
            		(0,0) -- ({cos(180)},{sin(180)});
			\node[xshift=-50pt, yshift=0pt] at ({cos(180)},{sin(180)}) {$[-e_{1}] = [d_{2}]$};
            		
            	\draw[line width=0.5pt, ->]
            		(0,0) -- ({cos(\ang+20*\eps)},{sin(\ang+20*\eps)});
            		\node[xshift=-20pt, yshift=20pt] at ({cos(\ang+20*\eps)},{sin(\ang+20*\eps)}) {$[w_{1}^{+}]$};
            		
            	\draw[line width=0.5pt, ->]
            		(0,0) -- ({cos(\ang+35*\eps)},{sin(\ang+35*\eps)});
            		\node[xshift=-20pt, yshift=+0pt] at ({cos(\ang+35*\eps)},{sin(\ang+35*\eps)}) {$[w_{2}^{+}]$};
            		
            	\draw[line width=0.5pt, ->]
            		(0,0) -- ({cos(180+\ang+15*\eps)},{sin(180+\ang+15*\eps)});
            		\node[xshift=+0pt, yshift=-20pt] at ({cos(180+\ang+15*\eps)},{sin(180+\ang+15*\eps)}) {$[w_{1}^{-}]$};	
            		
            	\draw[line width=0.5pt, ->]
            		(0,0) -- ({cos(180+\ang+25*\eps)},{sin(180+\ang+25*\eps)});
            		\node[xshift=+15pt, yshift=-10pt] at ({cos(180+\ang+25*\eps)},{sin(180+\ang+25*\eps)}) {$[w_{2}^{-}]$};
            
            \end{tikzpicture}
    \end{center}
    \end{minipage}
    \hfill
    \begin{minipage}{0.45\textwidth}
    \begin{center}
    	\begin{tikzpicture}[scale=2]
            
            	\def\ang{35}
            	\def\eps{5}
            
            	\draw[thick] (0,0) circle (1);
            	
            	\draw[line width=4pt, blue]
            		({cos(\ang)},{sin(\ang)})
            		arc[
            			start angle=\ang,
            			end angle=180+\ang,
            			radius=1
            		];
            
            	\draw[line width=4pt, orange]
            		({cos(180+\ang)},{sin(180+\ang)})
            		arc[
            			start angle=180+\ang,
            			end angle=360+\ang,
            			radius=1
            		];
            		
            	\fill (0,0) circle (0.025);
            	
            	\draw[dotted, line width=1pt, ->]
            		(0,0) -- ({cos(\ang)},{sin(\ang)});
            		
            	\draw[dotted, line width=1pt, ->]
            		(0,0) -- ({cos(180+\ang)},{sin(180+\ang)});
		
            	\draw[dashed, line width=1pt, ->]
            		(0,0) -- ({cos(90)},{sin(90)});
			\node[xshift=0pt, yshift=20pt] at ({cos(90)},{sin(90)}) {$[e_{2}] = [d_{1}]$};
            		
            	\draw[dashed, line width=1pt, ->]
            		(0,0) -- ({cos(180)},{sin(180)});
			\node[xshift=-50pt, yshift=0pt] at ({cos(180)},{sin(180)}) {$[-e_{1}] = [d_{2}]$};
            		
            	\draw[line width=0.5pt, ->]
            		(0,0) -- ({cos(\ang+1*\eps)},{sin(\ang+1*\eps)});
            		\node[xshift=+20pt, yshift=+5pt] at ({cos(\ang+1*\eps)},{sin(\ang+1*\eps)}) {$[w_{1}^{+}]$};
            		
            	\draw[line width=0.5pt, ->]
            		(0,0) -- ({cos(\ang+4*\eps)},{sin(\ang+4*\eps)});
            		\node[xshift=+20pt, yshift=15pt] at ({cos(\ang+4*\eps)},{sin(\ang+4*\eps)}) {$[w_{2}^{+}]$};
            		
            	\draw[line width=0.5pt, ->]
            		(0,0) -- ({cos(180+\ang+2*\eps)},{sin(180+\ang+2*\eps)});
            		\node[xshift=-25pt, yshift=+5pt] at ({cos(180+\ang+4*\eps)},{sin(180+\ang+4*\eps)}) {$[w_{1}^{-}]$};	
            		
            	\draw[line width=0.5pt, ->]
            		(0,0) -- ({cos(180+\ang+5*\eps)},{sin(180+\ang+5*\eps)});
            		\node[xshift=+5pt, yshift=-15pt] at ({cos(180+\ang+5*\eps)},{sin(180+\ang+5*\eps)}) {$[w_{2}^{-}]$};
            
            \end{tikzpicture}
    \end{center}
    \end{minipage}
\caption{On the left we illustrate the choice of $[d_{1}],[d_{2}]$ as dashed thick arrows which are $\omega_{T_{K}}$-positive.  The dotted thin arrows indicate the invariant directions, and the blue and orange sections indicate the positive and negative directions respectively.  The various oriented edges of the $\omega_{T_{K}}$-ordering are illustrated on the left for the marked tile $Q$.  On the right, we have the marked tile $\gamma Q$ after an application of a large positive loop which brings both non-negative edges between $[X_{T_{K}}]$ and $[d_{1}]$ and $[X_{T_{K}}]$ and $[d_{2}]$ whereas both non-positive edges get sent between $[-X_{T_{K}}]$ and $[-d_{1}]$ and $[-X_{T_{K}}]$ and $[-d_{2}]$.  
}\label{fig:getsmall}
\end{figure}

Declare this new marked tile to be $Q$.  By construction we then have the following inequalities.
\begin{equation*}
\det(w_{1}^{+}, d_{1}), \det(w_{2}^{+}, d_{2}) > 0 \text{ and } \det(w_{1}^{-}, d_{1}), \det(w_{2}^{-}, d_{2}) < 0
\end{equation*}
By Definition \ref{def:pthin}, $Q$ is a positively $P$-thin tile.  For any fixed integer $N > 0$, if we wish for $\gamma Q$ to remain $P$-thin for each $(n,m) \in \Z^{2}$ for which $|n| + |m| < N$, we simply take the sequence of positive loops $\gamma_{n}$ to be further out.  For positive loops $\gamma$, $\gamma Q$ is still $P$-thin as the non-negative directions of $Q$ remain non-negative and get closer to $X_{T_{K}}$ in $\fD$, and similarly for the non-positive directions of $Q$.  Figure \ref{fig:getsmaller} illustrates these dynamics.  

\begin{figure}
\begin{center}
    	\begin{tikzpicture}[scale=3]
            
            	\def\ang{35}
            	\def\eps{5}
            
            	\draw[thick] (0,0) circle (1);
            	
            	\draw[line width=4pt, blue]
            		({cos(\ang)},{sin(\ang)})
            		arc[
            			start angle=\ang,
            			end angle=180+\ang,
            			radius=1
            		];
            
            	\draw[line width=4pt, orange]
            		({cos(180+\ang)},{sin(180+\ang)})
            		arc[
            			start angle=180+\ang,
            			end angle=360+\ang,
            			radius=1
            		];
            		
            	\fill (0,0) circle (0.025);
            	
            	\draw[dotted, line width=1pt, ->]
            		(0,0) -- ({cos(\ang)},{sin(\ang)});
            		
            	\draw[dotted, line width=1pt, ->]
            		(0,0) -- ({cos(180+\ang)},{sin(180+\ang)});
            		
            	\draw[line width=0.5pt, ->]
            		(0,0) -- ({cos(\ang+1*\eps)},{sin(\ang+1*\eps)});
            		
            	\draw[line width=0.5pt, ->]
            		(0,0) -- ({cos(\ang+4*\eps)},{sin(\ang+4*\eps)});
		
            	\draw[line width=0.5pt, red, ->]
            		(0,0) -- ({cos(\ang+0.5*\eps)},{sin(\ang+0.5*\eps)});
		\draw[line width=0.5pt, red, ->]
            		(0,0) -- ({cos(\ang+0.25*\eps)},{sin(\ang+0.25*\eps)});
            		
            	\draw[line width=0.5pt, ->]
            		(0,0) -- ({cos(180+\ang+2*\eps)},{sin(180+\ang+2*\eps)});
            		
            	\draw[line width=0.5pt, ->]
            		(0,0) -- ({cos(180+\ang+5*\eps)},{sin(180+\ang+5*\eps)});
		
            	\draw[line width=0.5pt, red, ->]
            		(0,0) -- ({cos(180+\ang+1*\eps)},{sin(180+\ang+1*\eps)});
		\draw[line width=0.5pt,red,  ->]
            		(0,0) -- ({cos(180+\ang+0.5*\eps)},{sin(180+\ang+0.5*\eps)});
            
            \end{tikzpicture}
\end{center}
\caption{The figure above illustrates the quadrilateral edge directions of $Q$ in black of the right-hand side of Figure \ref{fig:getsmall}.  The red arrows are the new edge directions of $\gamma Q$ after the application of a very large positive loop $Q$ so that the edge directions get closer to $[\pm X_{T_{K}}]$ and still remain positively $P$-thin.  }
\end{figure}\label{fig:getsmaller}
\end{proof}

Lemma \ref{lem:sqthinexists} proves the existence of tiles which are well-adapted to our dynamical system.  In fact, the proof that there are divergent trajectories in our context is in some sense a result about the stability of a marked tile $Q$ remaining $P$-thin after the application of positive loops to $Q$.  The rest of this subsection is dedicated to making this statement precise and proving Theorem \ref{thm:eucdiv}.  To this end, let us recall the context.  We start with two marked tilings $(T_{k}, T_{K})$ where $T_{k}$ is the standard unit square tiling and $T_{K}$ is some non-Euclidean tiling which is transverse to $T_{k}$ and its invariant vector field not parallel to either the $x$ or $y$-axis.  When we play symplectic tiling billiards on this system, we observe that on $T_{K}$, the trajectories will only ever go vertical or horizontal due to the edge directions of $\fD_{T_{k}}$.  For a $P$-thin tile where $P$ is the unit square as in Lemma \ref{lem:sqthinexists}, this imposes strong restrictions on possible trajectories as seen in the following elementary lemma.

\begin{lemma}\label{lem:intersectedge}
Let $P$ be an unoriented quadrilateral and $T_{K}$ a complete affine tiling with marked tile $Q$ which is positively $P$-thin.  A ray pointed in an $\omega_{T_{K}}$-positive direction $[d_{i}]$ of $P$, as in Definition \ref{def:pthin}, and starting on the interior of a non-negative edge $w_{i}^{+}$ of $Q$ must intersect a non-positive edge $w_{j}^{-}$ of $Q$.
\end{lemma}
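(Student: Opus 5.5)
The plan is to use only convexity of $Q$, its counter-clockwise orientation, and the sign conditions in Definition \ref{def:pthin}. As noted after that definition, we may replace $d\Omega_{T_{K}}$ by $\det$. Write the edges of $Q$ in the induced counter-clockwise labeling as oriented vectors $w$. Since $Q$ is convex and counter-clockwise oriented, its interior is the intersection of the open half-planes lying to the left of its edges. For an oriented edge $w$ with initial point $a$, a point $x$ lies on the interior side of the line through that edge if and only if $\det(w, x-a) > 0$.

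First I will show that the ray enters the interior of $Q$. Let $p$ be an interior point of the non-negative edge $w_{i}^{+}$, and let $d = d_{i}$. Definition \ref{def:pthin} gives $\det(w_{i}^{+}, d) > 0$, so $d$ points strictly to the left of $w_{i}^{+}$. Because $p$ is not a vertex, it lies strictly inside the half-planes of the other three edges. Hence for small $t>0$ the point $p+td$ is in the interior of $Q$.

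Next I will locate the exit point. Since $Q$ is compact and convex, the set $\{t \geq 0 : p+td \in Q\}$ is a compact interval $[0,t_{0}]$ with $t_{0}>0$, and $x_{0} := p+t_{0}d$ lies on $\partial Q$. Let $e$ be an edge, with initial point $a$, whose closed segment contains $x_{0}$. I will show that some such $e$ can be taken non-positive. Suppose instead that every edge whose closed segment contains $x_{0}$ is non-negative. This covers one edge if $x_{0}$ is an interior point of an edge, and the two adjacent edges if $x_{0}$ is a vertex. For each such edge $e$, Definition \ref{def:pthin} gives $\det(e,d)>0$, so
\[
\det(e, x_{0}+sd-a) = s\det(e,d) > 0 \quad\text{for small } s>0 .
\]
Every other edge satisfies its strict inequality at $x_{0}$, and that persists for small $s$ by continuity. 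So $x_{0}+sd$ lies in $Q$ for small $s>0$, contradicting the maximality of $t_{0}$. Therefore some edge through $x_{0}$ is non-positive, and the ray meets it.

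The main obstacle is this vertex case: at a vertex joining two non-negative edges the exit argument needs both adjacent sign conditions at once. The ``for all $i$'' clause in Definition \ref{def:pthin} supplies exactly this, since $d$ satisfies the sign condition for every non-negative edge simultaneously. One can also note that the exit point cannot lie on $w_{i}^{+}$ itself other than at $p$, since the ray leaves that line immediately and the half-plane condition for $w_{i}^{+}$ only grows along $d$. This confirms that the edge hit is a genuinely different, non-positive edge $w_{j}^{-}$.
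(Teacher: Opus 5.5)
Your proof is correct and uses the same idea as the paper's. The paper supposes the ray hits the other non-negative edge $w_{j}^{+}$ at a point $q'$, observes that $-d_{i}$ would then point into the interior of $Q$, and derives $\det(w_{j}^{+},-d_{i})>0$, contradicting $P$-thinness; this is the same half-plane fact you apply in forward form at the exit point $x_{0}$.

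Your version is more careful, since it also checks that the ray enters $Q$, that an exit point exists, and that the ray cannot leave through $w_{i}^{+}$, all of which the paper leaves implicit. The vertex joining the two non-negative edges, which you flag as the main obstacle, is an endpoint of $w_{i}^{+}$. It is therefore never reached, because $\det(w_{i}^{+},d)\neq 0$ means the ray meets the line of $w_{i}^{+}$ only at $p$.
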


\begin{proof}
Let $q \in \del Q$ and $[d_{i}]$ an $\omega_{T_{K}}$-positive direction representing an edge of $P$ so that $\det(w_{i}^{+},d_{i}) > 0$ where $w_{i}^{+}$ is a non-negative edge of $\del Q$ containing $q$.  Because $Q$ is positively $P$-thin, $\det(w_{j}^{+},d_{i}) > 0$ where $w_{j}$ is the other non-negative edge of $\del Q$.  If the ray $q + td_{i}$ for $t \geq 0$ intersects the other non-negative edge $w_{j}^{+}$ at the point $q'$, then the ray $q' -t d_{i}$ for $t \geq 0$ must be in the interior of $Q$.  This means that $\det(w_{j}^{+},-d_{i}) > 0$, contradicting our hypothesis that $Q$ is positively $P$-thin.  Thus the ray must intersect a non-positive edge of $Q$.  
\end{proof}

One can see this property in Figure \ref{fig:pthin} with $[d_{1}] = [e_{1}]$ and $[d_{2}] = [e_{2}]$ for the unit square $P$ and the marked quadrilateral $Q$.  We explore some elementary consequences of Lemma \ref{lem:intersectedge} in our context where $T_{K}$ is our non-Euclidean tiling and $P$ is the unit square with sides parallel to the $x$ and $y$-axes.  Using Lemma \ref{lem:sqthinexists}, we can construct tiles $Q$ in $T_{K}$ that remain positively $P$-thin as we move through positive trajectories. \\
\\
Specifically, let $Q$ be a positively $P$-thin tile in $T_{K}$.  If we start on a non-negative edge, say $w_{i}^{+}$, and move in a positive direction, from either $[d_{1}] = [\pm e_{1}]$ or $[d_{2}] = [\pm e_{2}]$ depending on $\omega_{T_{K}}$, then Lemma \ref{lem:intersectedge} shows we must intersect a non-positive edge of $Q$ say $w_{j}^{-}$.  Assuming this point is not a vertex, we may continue the system.  The new tile is $\gamma_{1}Q$ where $\gamma_{1} = \pm e_{1}, \pm e_{2}$ depending on which edge of $Q$ was hit, and, the new marked edge of $\gamma_{1}Q$ is now non-negative to ensure the interior of the new tile $\gamma_{1}Q$ lies to the left of our new marked edge.  Because $Q$ was positively $P$-thin, this means the new possible directions are still the positive directions $[d_{1}]$ or $[d_{2}]$.  Thus the new configuration on the tile $\gamma_{1}Q$ is on a non-negative edge, and our possible directions are again $[d_{1}]$ or $[d_{2}]$.  If $\gamma_{1}Q$ is still positively $P$-thin, then we may apply the same arguments to yield a configuration on the tile $\gamma_{2}\gamma_{1}Q$ on a non-negative edge, again with the same possible positive directions $[d_{1}]$ and $[d_{2}]$.  Consequently the trajectory must lie in a positive cone determined by $d_{1},d_{2}$.  This argument was informed by the behavior noticed through experimentation which is discussed towards the end of this section, however, illustrations conveying this argument can be seen for example in Figure \ref{fig:symtil1}.  
\\
\\
If we are able to show that as we move through the tiles, we may control the shapes of the tiles to ensure we consistently move through positively $P$-thin tiles, then our orbits will remain in a positive cone determined by $[d_{1}]$ and $[d_{2}]$.  This will suffice to show our orbits diverge as any divergent sequence of points $p_{n}$ in a cone determined by two positive directions is divergent in the sense that $\lim_{n \to \infty} \omega_{T_{K}}(p_{n}-p_{0}) = \infty$ because both $[d_{1}]$ and $[d_{2}]$ are positive.  Consequently we aim to show that as we move through the tiles using only the positive directions $d_{1}$ and $d_{2}$, our tiles remain $P$-thin.  This is the content of Lemma \ref{lem:positivemoves} below.  We emphasize that the proof this lemma hinges on the rationality hypothesis of our tiling, and consequently, so too does our proof on the existence of divergent orbits.  

\begin{lemma}\label{lem:positivemoves}
Let $T_{K}$ be a marked non-Euclidean complete rational affine tiling of $\R^{2}$ and let $X_{T_{K}}, \omega_{T_{K}}$ be its canonical parallel invariant vector field and one-form respectively.  Assume that $K = (a/c,b/c)$ with $a,b \in \Z$ and where $a$ and $b$ are relatively prime non-zero integers and $c$ is some reduced rational number.  Define $N := |a| + |b|$ and let $q \in \del Q$ be an interior edge point.  Fix a positive direction $d$ and consider the ray $r(t) := q + td$ for $t \geq 0$ and the sequence of adjacent marked tiles $Q_{0} := Q, Q_{1}, \hdots, Q_{N}$ that this ray intersects where at least one point of $Q_{i}$ lies in the half-plane $H_{r}$ defined by the ray $r(t)$.  If we define $\gamma_{i} \in \Z^{2}$ to be the loop such that $\gamma_{i}Q_{0} = Q_{i}$ for each $i = 1 \hdots N$, then the collection $\{\gamma_{1},\hdots, \gamma_{N}\}$ contains a loop so that $\omega_{T_{K}}(T_{T_{K}}(\gamma_{i})) \geq 0$.  If either $a = b = 0$, then we may take $N = 1$ so that the adjacent tile $Q_{1} = \gamma_{1} Q_{0}$ satisfies $\omega_{T_{K}}(T_{T_{K}}(\gamma_{1})) \geq 0$.  
\end{lemma}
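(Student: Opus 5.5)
Throughout, write $T$ for $T_{K}$ and work in the normalized model where $\rho_{T}$ is conjugate to $\rho_{K}$ as in Equation \ref{eq:lintraneq}. The first step is to reduce the quantity $\omega_{T}(T_{T}(\gamma))$ to a linear functional of $\gamma$. By the computation at the end of the proof of Lemma \ref{lem:getthin}, $\omega_{T}(T_{T}(\gamma)) = \omega_{K}(T_{K}(\gamma)) = K\cdot\gamma$, since the correction term $\omega_{K}((\id - L_{K}(\gamma))T(g))$ vanishes by the parabolicity of $L_{K}(\gamma)$. Indeed, $\id - L_{K}(\gamma)$ has image in $\R K^{\perp}$, and $\omega_{K}(K^{\perp}) = 0$. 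Hence for $\gamma = (n,m)$ we get $\omega_{T}(T_{T}(\gamma)) = (an+bm)/c$. The claim therefore becomes purely combinatorial: among the loops $\gamma_{1},\hdots,\gamma_{N}$ recorded along the ray, at least one satisfies $\operatorname{sign}(c)(an+bm)\geq 0$.

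Next I will describe the combinatorics of the ray. Each time the ray $r(t)$ crosses an edge of the current tile it passes to an adjacent tile, so consecutive loops differ by $\pm e_{1}$ or $\pm e_{2}$. Thus $\gamma_{0}=0,\gamma_{1},\hdots,\gamma_{N}$ is a lattice path with unit steps. The key geometric input I intend to use is that $d$ is $\omega_{T}$-positive. Integrating $\omega_{T}$ along the ray gives $\omega_{T}(r(t)-q) = t\,\omega_{T}(d)$, which is strictly increasing. Because the tiles $\gamma Q$ lie in the strip where $\omega_{T}$ ranges over a bounded window around $\omega_{T}(T_{T}(\gamma))$ (each tile is a translate-up-to-holonomy of $Q$ and $\omega_{T}$ is invariant), the ray cannot stay forever among tiles of bounded-above $K\cdot\gamma$. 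I will also use the hypothesis that each $Q_{i}$ meets the half-plane $H_{r}$ to exclude backtracking through tiles lying on the wrong side.

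The main step is a pigeonhole argument on the values $K\cdot\gamma_{i}$. Suppose for contradiction that all $K\cdot\gamma_{i}<0$ for $i=1,\hdots,N$. Since the steps are unit steps, each step changes $c(K\cdot\gamma)=an+bm$ by one of $\pm a,\pm b$. The claim is that a unit lattice path of length $N=|a|+|b|$ whose values of $an+bm$ stay in a window of width less than $|a|+|b|$ (forced by the strip/monotonicity bound) must revisit a level, or must use a step increasing $an+bm$ from a nonpositive value to a nonnegative one. I expect this to be the main obstacle. Making the strip bound precise, so that the ray genuinely forces $K\cdot\gamma$ to cross $0$ within $|a|+|b|$ steps, requires controlling the width of $Q$ in the $\omega_{T}$ direction. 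By Equation \ref{eq:edge_par}, that width is at most $|k_{1}|+|k_{2}| = N/|c|$, and I will use this to bound the number of crossings of tiles sharing a single $\omega_{T}$-level.

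The degenerate case is when one of $a,b$ is zero, say $a=0$, so that $K=(0,b/c)$. Then $\omega_{T}(T_{T}(\gamma)) = bm/c$, and the edges $v_{1},v_{3}$ are neutral by Lemma \ref{lem:k1k2zero}. The positive ray must therefore exit $Q$ through the positive edge, which gives $\gamma_{1}=\pm e_{2}$ with the correct sign, and $N=1$ suffices.
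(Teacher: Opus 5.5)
There is a genuine gap at the decisive step, although your framework is the right one.

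\textbf{What is correct.} Your reduction holds, and it organizes the argument more cleanly than the paper's anchor-and-leaf picture. We have $\omega_{T}(T_{T}(\gamma))=K\cdot\gamma$, and the tile $\gamma Q$ occupies the $\omega_{T}$-range of $Q$ shifted by $K\cdot\gamma$. That range has width exactly $|k_{1}|+|k_{2}|$. Since $\omega_{T}(r(t))>\omega_{T}(q)\geq\min_{Q}\omega_{T}$ for $t>0$, every $Q_{i}$ with $i\geq 1$ satisfies $s_{i}:=an_{i}+bm_{i}>-N$ (after arranging $c>0$).

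\textbf{Where it stops.} You conclude that the path ``must revisit a level''. You then plan to use the $\omega_{T}$-width of $Q$ to bound how many tiles of a single level the ray crosses. That mechanism cannot work. The width is exactly what produced the window, and it says nothing about how far apart two same-level tiles are in the lattice. Revisiting a level is not contradictory in itself: far-out tiles are long and nearly parallel to $X_{T}$, so a ray can meet two tiles of the same level.

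\textbf{The missing idea is arithmetic.} Two facts are needed:
(i) a ray meets each convex tile in a single interval, so $\gamma_{1},\dots,\gamma_{N}$ are distinct;
(ii) since $\gcd(a,b)=1$, two loops with the same value of $an+bm$ differ by a nonzero multiple of $(b,-a)$. Equivalently, the corresponding tiles differ by a power of the pure translation $\rho_{T}((b,-a))$. Such loops are therefore at $\ell^{1}$-distance at least $|a|+|b|=N$.

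With (i) and (ii) the argument closes in one line. If $s_{1},\dots,s_{N}\leq -1$, the $N$ values lie among the $N-1$ integers $-N+1,\dots,-1$. So $s_{i}=s_{j}$ for some $1\leq i<j\leq N$, whence $\|\gamma_{j}-\gamma_{i}\|_{1}\geq N$. But $Q_{i}$ and $Q_{j}$ are joined by $j-i\leq N-1$ unit steps, a contradiction. This is the precise content behind the paper's informal claim that, by rationality, after $N$ moves one reaches a translate of $Q_{0}$.

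\textbf{Degenerate case.} Your argument here is slightly off. When $k_{1}=0$ the ray may leave $Q$ through $v_{2}$ or $v_{4}$, giving $\gamma_{1}=\pm e_{1}$ rather than $\pm e_{2}$. This is harmless, since $K\cdot(\pm e_{1})=k_{1}=0$. Moreover, with $a=0$ coprimality (or simply rescaling $c$) gives $|b|=1$, so $N=1$. The general argument then already applies, because no integer lies strictly between $-1$ and $0$.

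\textbf{Minor points.} Your unit-step claim tacitly assumes the ray misses vertices. The $H_{r}$ hypothesis is not needed: monotonicity of $\omega_{T}$ along the ray, together with (i), already rules out the backtracking you wanted it to exclude.
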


\begin{proof}
To make notation less cumbersome, let us denote our tiling by $T$.  For now, assume $ab \neq 0$, thus $k_{1}k_{2} \neq 0$.  Draw the ray $r(t) = q + td$ for $t \geq 0$ and $d$ some $\omega_{T}$-positive direction.  Because $k_{1}k_{2} \neq 0$, the edges are not parallel to the flow determined by $X_{T}$.  Let $Q_{i} = \gamma_{i}Q_{0}$ denote the various tiles that $r(t)$ intersects, and consider the sequence of anchors as determined by each $Q_{i}$.  We claim that there is some $\gamma_{i}$ for which $\omega_{T}(T_{T}(\gamma_{i})) \geq 0$.  If we let $p_{i}$ denote the anchor of $Q_{i}$, then the sequence of anchors is given by $p_{i} = L_{T}(\gamma_{i})p_{0} + T_{T}(\gamma_{i})$ where $L_{T}$ is the linear part of $\rho_{T}$.  We have that $\omega_{T}(p_{i}-p_{0}) = \omega_{T}(T_{T}(\gamma_{i}))$ as $L_{T}(\gamma_{i})-\id$ is parallel to the vector flow as seen in Equation \ref{eq:hol_rep}.  Thus it suffices to show that one of the $\gamma_{i}$ takes the difference in anchors $p_{i}-p_{0}$ to a neutral direction.  \\
\\
To this end if we draw the flow line through $p$ it will determine an oriented leaf of the foliation and we claim that one of the $p_{i}$'s lies to the left of it, or at worst on it, after $N$-steps.  Certainly, if one of the $\gamma_{i}$'s takes the entire tile $Q_{i} = \gamma_{i}Q_{0}$ to the left of the leaf, then we are done.  Instead assume that the sequence of tiles $Q_{i}$ intersects this leaf for each $i = 1, \hdots, N$.  By the rationality hypothesis, after $N = (|a|+|b|)$-iterations through these tiles, the resulting tile $Q_{N}$ will be a translate of $Q_{0}$, and thus, the difference in anchors $p_{N}-p$ is parallel to the leaf as claimed.  Figure \ref{fig:positive_lemma} illustrates this argument.  
\\
\\
In the case where $k_{1}k_{2} = 0$, we proceed as before where we let $p_{0}$ denote the anchor of $Q_{0}$.  When we draw the oriented leaf through $p_{0}$, by Lemma \ref{lem:k1k2zero}, the leaf is entirely contained in $\del T$.  Every adjacent tile $Q_{1}$ with at least one point to the left the of half-plane determined by this leaf has an anchor that lies to the left of the leaf, or at worst, along it.  Thus we may take $N = 1$ as claimed so that $\omega_{T}(T_{T}(\gamma_{1})) \geq 0$.  
\end{proof}

\begin{figure}
\includegraphics[scale=0.4]{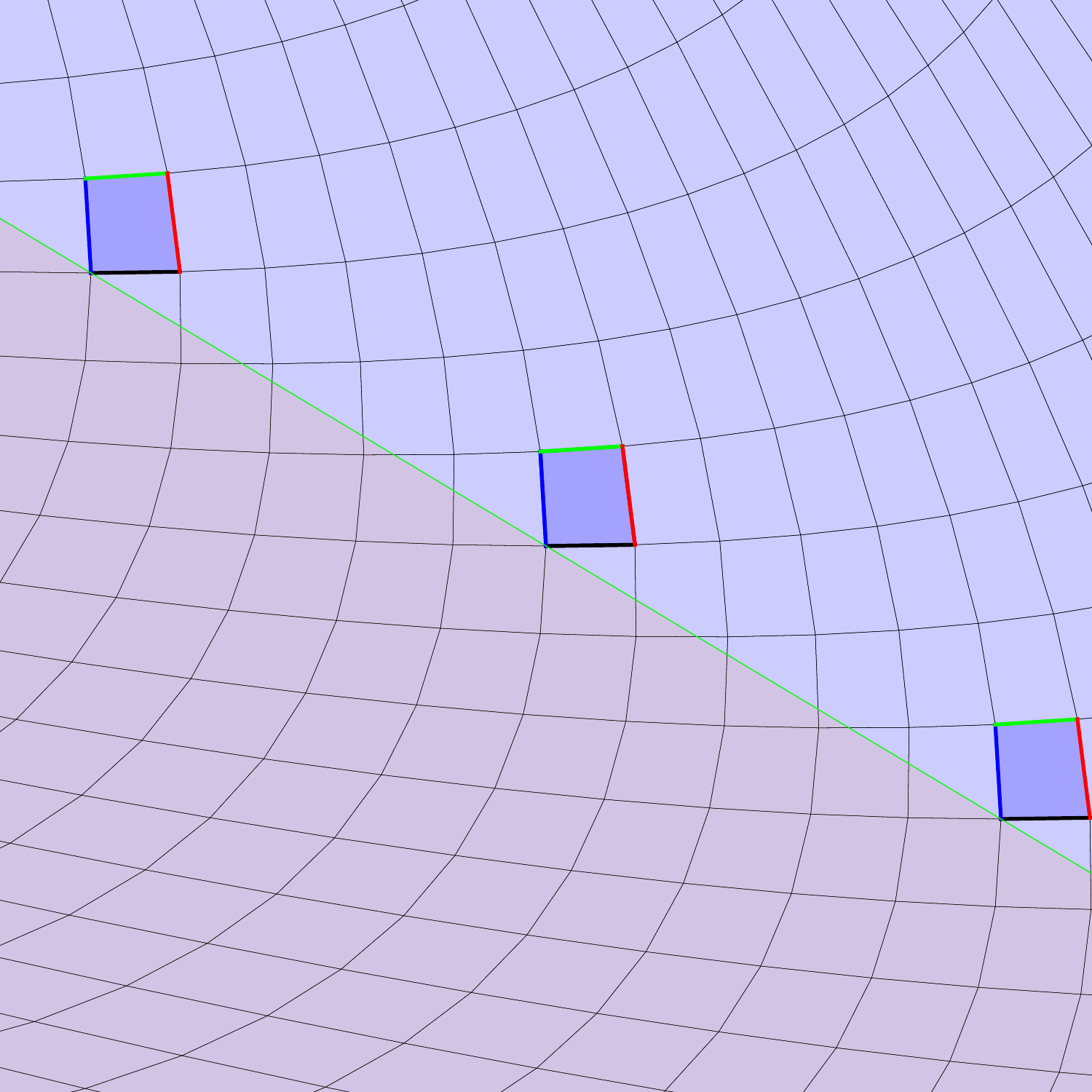}
\caption{An illustration of the argument in Lemma \ref{lem:positivemoves} where $k = (-3/10,-5/10)$.  The marked tile $Q$ is illustrated in the middle, and the half-plane $H_{r}$ is illustrated in light orange delineated by the flow line in green.  After at most $8$-moves in positive directions, $\gamma_{i}Q$ will be a translate of $Q$ so $\omega_{T_{K}}(T_{T_{K}}(\gamma_{i})) \geq 0$.  
}\label{fig:positive_lemma}
\end{figure}

Optimistically, one could hope that moving in a positive direction to go from tile $Q_{0}$ to tile $\gamma_{1}Q_{0}$ always necessitates that $\gamma_{1}$ is a positive loop.  Figure \ref{fig:positive_lemma} illustrates that this is not necessarily the case.  The immediate tiles adjacent to the center marked one correspond to non-positive $\gamma_{1}$.  However, the figure illustrates how to amend this statement as in Lemma \ref{lem:positivemoves}.  While a single positive direction does not necessarily determine a loop $\gamma$ so that $\omega_{T}(T_{T}(\gamma))\geq 0$, if one only moves in positive directions on a \emph{rational torus}, then there will eventually be such a loop so that $\omega_{T}(T_{T}(\gamma)) \geq 0$.  Such a loop is guaranteed by the translational symmetry, and thus the shape of the tile is preserved after $N := |a| + |b|$ moves as in Lemma \ref{lem:positivemoves}.  This is particularly useful for us because Lemma \ref{lem:positivemoves} gives us an effective way to control the shape of the tile as we move through positive directions.  Combining the results of our previous arguments, we are finally able to establish our theorem about divergent orbits.  

\begin{theorem}\label{thm:eucdiv}
Let $(T_{k}, T_{K})$ be a pair of transverse Euclidean and non-Euclidean marked complete \emph{rational} affine tilings where the invariant vector field of $T_{K}$ is not parallel to the edge directions $\fD_{T_{k}}$ of $T_{k}$.  Then there exist marked tiles $P$ and $Q$ and oriented edges of these tiles so that for any configuration along these edges which is defined for all time diverges positively along the invariant one-form $\omega_{T_{K}}$ of $T_{K}$.  More precisely, if $\{(p_{n}, q_{n})\}_{n=0}^{\infty} \subset \del T_{k}\times \del T_{K}$ denotes the sequence of points of the symplectic tiling billiards trajectories, then we have the following asymptotic behavior.  
\begin{equation}\label{eq:eucasym}
\lim_{n \to\infty} [p_{n+1}-p_{n}] = [\pm X_{T_{K}}] \in \fD \text{ and } \lim_{n\to\infty} \int_{q_{n}-q_{0}} \omega_{T_{K}} = \infty
\end{equation}
\end{theorem}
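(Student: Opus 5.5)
We normalize by a simultaneous element of $\Affp(2,\R)$, as the paper allows, so that $T_{k}$ is the standard unit square tiling with marked square $P$. Transversality and the hypothesis $[\pm X_{T_{K}}]\notin\fD_{T_{k}}$ let us fix the two $\omega_{T_{K}}$-positive directions $[d_{1}]\in\{[\pm e_{1}]\}$ and $[d_{2}]\in\{[\pm e_{2}]\}$. Write $K=(a/c,b/c)$ as in Lemma \ref{lem:positivemoves} and set $N=|a|+|b|$, or $N=1$ if $ab=0$. Lemma \ref{lem:sqthinexists} then gives a marked tile $Q$ of $T_{K}$ that is positively $P$-thin. We choose $Q$ so that every tile $\gamma Q$ with $|n|+|m|<M$ is also $P$-thin, where $M$ is a large integer (taken as $2N+1$) to be fixed below. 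The oriented edges named in the theorem are an edge of $P$ and a non-negative edge $w^{+}$ of $Q$. The $q$-particle is pointed into $Q$, and the $p$-particle is pointed so that the first direction used in $T_{K}$ is $d_{1}$ or $d_{2}$.

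The core is an induction along the trajectory. Suppose $q_{n}$ lies on a non-negative edge of a $P$-thin tile $Q'$ of $T_{K}$ and the next direction is one of $d_{1},d_{2}$. Lemma \ref{lem:intersectedge} forces $q_{n+1}$ onto a non-positive edge of $Q'$. Viewed from the adjacent tile, that edge is non-negative, and we must check that the direction chosen from $\fD_{T_{k}}$ at the next step is again positive. The directions in $T_{K}$ are always $\pm e_{1},\pm e_{2}$, and the sign is fixed by which side of the $T_{K}$-edge the tile lies on. Being $P$-thin means both $d_{1}$ and $d_{2}$ point from the non-negative edge into the tile, so the next direction is again $d_{1}$ or $d_{2}$. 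By induction every displacement $q_{n+1}-q_{n}$ is a positive combination of $d_{1},d_{2}$. We also need the displacement to be bounded below, so that the sum diverges. For this, note that $P$-thinness is preserved for all tiles reached, only finitely many tile shapes occur (see the next paragraph), and a chord crossing a tile of one of these finitely many shapes, entering and leaving on opposite types of edges in directions $d_{1},d_{2}$, has $\omega_{T_{K}}$-length bounded below. Then $\omega_{T_{K}}(q_{n}-q_{0})\to\infty$. For the $T_{k}$ side, $p_{n+1}-p_{n}$ is parallel to the edge of $T_{K}$ containing $q_{n}$, and those tiles drift outward in the positive direction. Lemma \ref{lem:getthin} then makes their edge directions converge to $[\pm X_{T_{K}}]$, which gives the first limit in \eqref{eq:eucasym}.

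The main obstacle is proving that the tiles visited stay $P$-thin for all time, since a single positive move can decrease $\omega_{T_{K}}(T_{T_{K}}(\gamma))$. Here we plan to use Lemma \ref{lem:positivemoves} and rationality. In every block of at most $N$ consecutive moves the trajectory reaches a tile $\gamma Q$ with $\omega_{T_{K}}(T_{T_{K}}(\gamma))\ge 0$, so it never wanders more than $N$ steps in the lattice from the set of tiles $\gamma Q$ with $\gamma$ non-negative relative to $Q$. Applying non-negative loops to a $P$-thin tile keeps it $P$-thin: the edge directions move monotonically toward $[\pm X_{T_{K}}]$, or are fixed when the loop is neutral. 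Neutral loops act as translations by rationality, so the tile shape is unchanged. Tiles within $N$ steps of such a tile are controlled by our choice of $M$. Two details must be verified carefully: that the neutral translations form a cyclic group generated by the kernel of $L_{T_{K}}$, so that shapes repeat, and that the hypothesis of Lemma \ref{lem:positivemoves} about the half-plane $H_{r}$ holds along the actual billiard path, which bends between $d_{1}$ and $d_{2}$ instead of following a single ray. For the latter, we would restart the lemma at each anchor and use the cone of $d_{1},d_{2}$ in place of a single ray. Finally, vertex-hitting trajectories are excluded by the hypothesis that the configuration is defined for all time, which by Lemma \ref{lem:gen_good} holds for almost all starting points.
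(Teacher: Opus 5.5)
Your set-up, the inductive step, and your plan for keeping the visited tiles $P$-thin all follow the paper's route. The inductive step is: on a non-negative edge of a positively $P$-thin tile the $T_{K}$-direction is forced to be $d_{1}$ or $d_{2}$, and Lemma \ref{lem:intersectedge} puts $q_{n+1}$ on a non-positive edge, i.e.\ on a non-negative edge of the next tile. The stability plan uses Lemma \ref{lem:positivemoves}, rationality, and the $\ell^{1}$-ball from Lemma \ref{lem:sqthinexists}.

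The genuine gap is the step giving $\omega_{T_{K}}(q_{n}-q_{0})\to\infty$. You assert that only finitely many tile shapes occur and that every chord has $\omega_{T_{K}}$-length bounded below. Both claims are false.
\begin{itemize}
\item Up to translation, the shape of $\gamma Q$ is determined by $\omega_{T_{K}}(T_{T_{K}}(\gamma))$, since $L_{T_{K}}(\gamma)$ depends only on this number. Along a diverging trajectory this number is unbounded. Indeed the first limit in \eqref{eq:eucasym} says the edge directions of the visited tiles converge to $[\pm X_{T_{K}}]$, so infinitely many shapes must occur.
\item A single tile already gives no lower bound. Take either of the two vertices where a non-negative edge meets a non-positive one. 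A chord starting on the non-negative edge at distance $\epsilon$ from that vertex, in direction $d_{i}$, exits through the adjacent non-positive edge after length $O(\epsilon)$. Convexity and the sign conditions of Definition \ref{def:pthin} place the exit point on that edge near the vertex.
\end{itemize}
So nothing in your argument prevents $\sum_{n}\omega_{T_{K}}(q_{n+1}-q_{n})$ from converging. The first limit is then unsupported as well, since Lemma \ref{lem:getthin} needs $\omega_{T_{K}}(T_{T_{K}}(\gamma_{n}))\to\infty$ for the visited tiles $\gamma_{n}Q$.

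\textbf{A correct replacement for the divergence step.} The visited tiles are pairwise distinct. Once $q_{n+1}$ leaves a positively $P$-thin tile through a non-positive edge, every point of $q_{n+1}+\{t_{1}d_{1}+t_{2}d_{2} : t_{1},t_{2}\geq 0\}$ other than $q_{n+1}$ lies strictly beyond that edge's line. The rest of the path stays in this set, so it never re-enters the tile. Now suppose $\omega_{T_{K}}(q_{n}-q_{0})$ were bounded. Since $d_{1},d_{2}$ are both $\omega_{T_{K}}$-positive, all $q_{n}$ would lie in a compact part of the cone at $q_{0}$ spanned by $d_{1},d_{2}$. That compact set meets only finitely many tiles, contradicting distinctness. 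Monotonicity then gives $\omega_{T_{K}}(q_{n}-q_{0})\to\infty$.

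\textbf{The first limit.} Write $K=(K_{1},K_{2})$ and let $p_{0}$ be the anchor of $Q$. Every tile $\gamma Q$ lies in the band where $\omega_{T_{K}}(x-p_{0})-\omega_{T_{K}}(T_{T_{K}}(\gamma))$ ranges over a fixed interval of length $|K_{1}|+|K_{2}|$. Since $q_{n}\in\gamma_{n}Q$, this forces $\omega_{T_{K}}(T_{T_{K}}(\gamma_{n}))\to\infty$, and Lemma \ref{lem:getthin} applies.

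\textbf{The obstacle you singled out.} The same band observation removes it, including the bent-path issue with Lemma \ref{lem:positivemoves}. Write a non-neutral direction as $[sX_{T_{K}}+tY]$ with $\omega_{T_{K}}(Y)=1$.
\begin{itemize}
\item Each $L_{T_{K}}(\gamma)$ shifts $s/t$ by $\omega_{T_{K}}(T_{T_{K}}(\gamma))$.
\item Definition \ref{def:pthin} says exactly that $s/t$ of every edge of $Q$ exceeds $s/t$ of $d_{1}$ and of $d_{2}$, with neutral edges counting as $+\infty$.
\end{itemize}
Thus $\gamma Q$ is positively $P$-thin precisely when $\omega_{T_{K}}(T_{T_{K}}(\gamma))$ exceeds a threshold. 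By choosing $Q$ far out you can push this threshold below $-(|K_{1}|+|K_{2}|)$. Since $\omega_{T_{K}}$ is non-decreasing along the path, the band bound keeps every visited tile at or above $-(|K_{1}|+|K_{2}|)$, hence above the threshold. The induction therefore closes without Lemma \ref{lem:positivemoves} and without using rationality in this step.
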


\begin{proof}
Preliminarily observe that such pairs of tilings exist by Theorem \ref{thm:goodtiles}.  Because the dynamics are invariant under affine symmetries as discussed in the beginning of Section \ref{ssec:div}, we may assume without loss of generality $T_{k}$ is the standard marked Euclidean tiling and $T_{K}$ is a marked non-Euclidean tiling whose canonical vector field $X_{T_{K}}$ is not parallel to either $e_{1},e_{2}$, and the tilings are transverse.  Let $P$ denote the marked unit square of $T_{k}$.  \\
\\
Let $K = (a/c,b/c)$ where $a,b$ are relatively prime integers and $c$ is a reduced rational.  Define $N := |a| + |b|$.  By Lemma \ref{lem:sqthinexists}, we may choose a $\omega_{K}$-positively $P$-thin marked tile $Q$ of $T_{K}$ so that for all $\gamma = (n,m)$ with $|n| + |m| < 2N$, $\gamma Q$ is also $P$-thin.  Pick any edge of $P$ and interior point of this edge.  Pick either \emph{non-negative} edge $w_{1}^{+}, w_{2}^{+}$ of $Q$ and an interior point of this edge.  Call these points $(p_{0},q_{0}) \in \del T_{k} \times \del T_{K}$ respectively.  Let us assume that these points are dynamically defined for all forward iterations so $\{(p_{n},q_{n})\}_{n=0}^{n=\infty}$ is well-defined.  That is to say, none of the points $p_{n} \in \del T_{k}$ or $q_{n} \in \del T_{K}$ are vertex points of their respective tilings which exist by Lemma \ref{lem:gen_good}.  We aim to prove the claims in Equation \ref{eq:eucasym}.  \\
\\
Because $Q$ is positively $P$-thin, there are choices amongst the edge directions of $P$, $[\pm e_{1}], [\pm e_{2}]$ which are $\omega_{T_{K}}$-positive directions, say $[d_{1}]$ and $[d_{2}]$ respectively which satisfy $\det(w_{i}^{+},d_{1})$, $\det(w_{i}^{+},d_{2}) > 0$ for both non-negative edges $w_{1}^{+},w_{2}^{+}$.  Let us apply the symplectic tiling billiards rule to obtain $(p_{1},q_{1}) \in \del T_{k} \times \del T_{K}$ focusing on $T_{k}$ first.  The initial point $p_{0}$ is on an edge of $P$ whose direction is one of $[\pm e_{1}], [\pm e_{2}]$.  The initial point $q_{0}$ is on a non-negative edge $Q$ whose direction is one of $[w_{i}^{+}]$.  By the positively $P$-thin property of $Q$, we have both $\det(w_{i}^{+},d_{1})$, $\det(w_{i}^{+},d_{2}) > 0$.  So if $p_{0}$ is on an edge directed in either $[d_{1}]$ or $[d_{2}]$, then $p_{0}$ will follow a trajectory directed towards $[-w_{i}^{+}]$.  Similarly if $p_{0}$ is on an edge directed in either $[-d_{1}]$ or $[-d_{2}]$, then $p_{0}$ will follow a trajectory directed towards $[w_{i}^{+}]$.  This will intersect a new point $p_{1}$ of $\del P$ and define a new marked tile $\alpha_{1}P$ containing $p_{1}$ in its boundary.  By construction, the new marked edge will be directed in either $[d_{1}]$ or $[d_{2}]$, or, either $[-d_{1}]$ or $[-d_{2}]$.  
\\
\\
On the other tiling $T_{K}$, we have $q_{0}$ lies on the non-negative edge $w_{i}^{+}$ of the marked tile $Q$.  By the positively $P$-thin property of $Q$, the trajectory must be directed in either positive direction $[d_{1}]$ or $[d_{2}]$.  By Lemma \ref{lem:intersectedge}, this trajectory must intersect a non-positive edge $w_{j}^{-}$ of $Q$ at the point $q_{1}$.  This determines a new marked tile $\beta_{1}Q$ containing $q_{1}$ in its boundary.  By the positive $P$-thinness of $Q$, since $\det(w_{j}^{-},d_{1})$, $\det(w_{j}^{-},d_{2}) < 0$, this means the directions $[d_{1}], [d_{2}]$ are pointed in the interior of $\beta_{1}Q$ based at $q_{1}$.  Moreover $q_{1}$ is contained in a non-negative edge $w_{i}^{+}$ of $\beta_{1}Q$, as $q_{1}$ was contained in a non-positive edge of $Q$.  \\
\\
This gives us our new pair $(p_{1},q_{1})$ in the boundary of the marked tiles $P_{1} = \alpha_{1}P$ and $Q_{1} := \beta_{1}Q$.  By the above arguments $q_{1}$ is again contained in a non-negative edge $w_{i}^{+}$ of the tile $Q_{1}$, and, by hypothesis $Q_{1}$ is still $P$-thin.  As $P_{1} = P$ up to translation, the same arguments applied in the case for $(p_{0},q_{0})$ follow and we may iterate this process while only moving through the positive directions $[d_{1}]$ and $[d_{2}]$ on the tiling $T_{K}$, and while only moving in directions pointed towards either $[-w_{i}^{+}]$ or $[w_{i}^{+}]$ on $T_{k}$ where the sign is determined by the initial edge of $P$.  By the hypothesis on our original tile $Q$, we remain in positively $P$-thin tiles for at least $N$-moves in the positive directions $[d_{1}]$ and $[d_{2}]$ as $N < 2N$.  By the rationality hypothesis, Lemma \ref{lem:positivemoves} and Lemma \ref{lem:getthin} guarantee after at most $N$-moves there exists a loop $\beta_{i} \in \Z^{2}$ for which the tile $\beta_{i}Q$ must have its edge directions $[w_{1}^{+}], [w_{2}^{+}]$ and $[w_{1}^{-}], [w_{2}^{-}]$ move clockwise closer to $[X_{T_{K}}]$ and $[-X_{T_{K}}]$ in $\fD$, or, remain the same.  In either case, this new marked tile $\beta_{i}Q$ is still positively $P$-thin, and because its edge directions either move clockwise closer to $[X_{T_{K}}]$ and $[-X_{T_{K}}]$, or, remain the same, so $\beta(\beta_{i}Q)$ is also $P$-thin for all $\beta = (n,m)$ with $|n|+|m| < 2N$.  
\\
\\
Thus, our sequence of points $q_{n}$ may only ever move in the positive directions $[d_{1}]$ or $[d_{2}]$ for every iteration.  Consequently, the sequence $q_{n}$ is contained in the positive cone defined by $q_{0} + (t_{1}d_{1} + t_{2}d_{2})$ for $t_{1},t_{2} \geq 0$.  Because the anchors of the tiles $\beta_{n}Q$ remain a bounded distance outside of this cone, it follows the sequence $\omega_{T_{K}}(T_{T_{K}}(\beta_{n}))$, which equals the difference of anchors between $\beta_{n}Q$ and $Q$ paired against $\omega_{T_{K}}$, tends to infinity as both rays $t_{1}d_{1}, t_{2}d_{2}$ do.  Thus $\lim_{n\to\infty} \int_{q_{n}-q_{0}} \omega_{T_{K}} = \lim_{n\to\infty}  \omega_{T_{K}}(q_{n}-q_{0}) = \infty$ as claimed.  The fact that $\lim_{n \to\infty} [p_{n+1}-p_{n}] = [\pm X_{T_{K}}] $ follows readily as the edge directions of the tiles $\beta_{i}Q$ tending to $[\pm X_{T_{K}}]$.  Since the initial points $(p_{0},q_{0})$ were generic along the edges, the asymptotic qualities of Equation \ref{eq:eucasym} are true for any points we begin with satisfying the hypotheses in the initial part of the proof.  Thus this divergence is stable.  
\end{proof}

The proof of Theorem \ref{thm:eucdiv} is better understood with accompanying images in mind.  Below are images of the game carried out on a pair of edges that satisfy the hypotheses of Theorem \ref{thm:eucdiv}.  These simulations were programmed in Java using exact rational arithmetic.  The tilings, $T_{K}$, are parametrized by the family of holonomy representations given in Equation \ref{eq:hol_rep}.  In the interest of computational speed, they only illustrate the tiles that are traversed through the trajectories.  These programs can be found on the first author's webpage.  We emphasize that while the images below only illustrate the family $T_{K}$ as parametrized by the holonomies of Equation \ref{eq:hol_rep}, our Theorem applies more broadly to families which are transverse in the sense of Theorem \ref{thm:goodtiles}.  
\\
\begin{figure}
	\begin{minipage}{0.45\textwidth}
		\includegraphics[width=\textwidth]{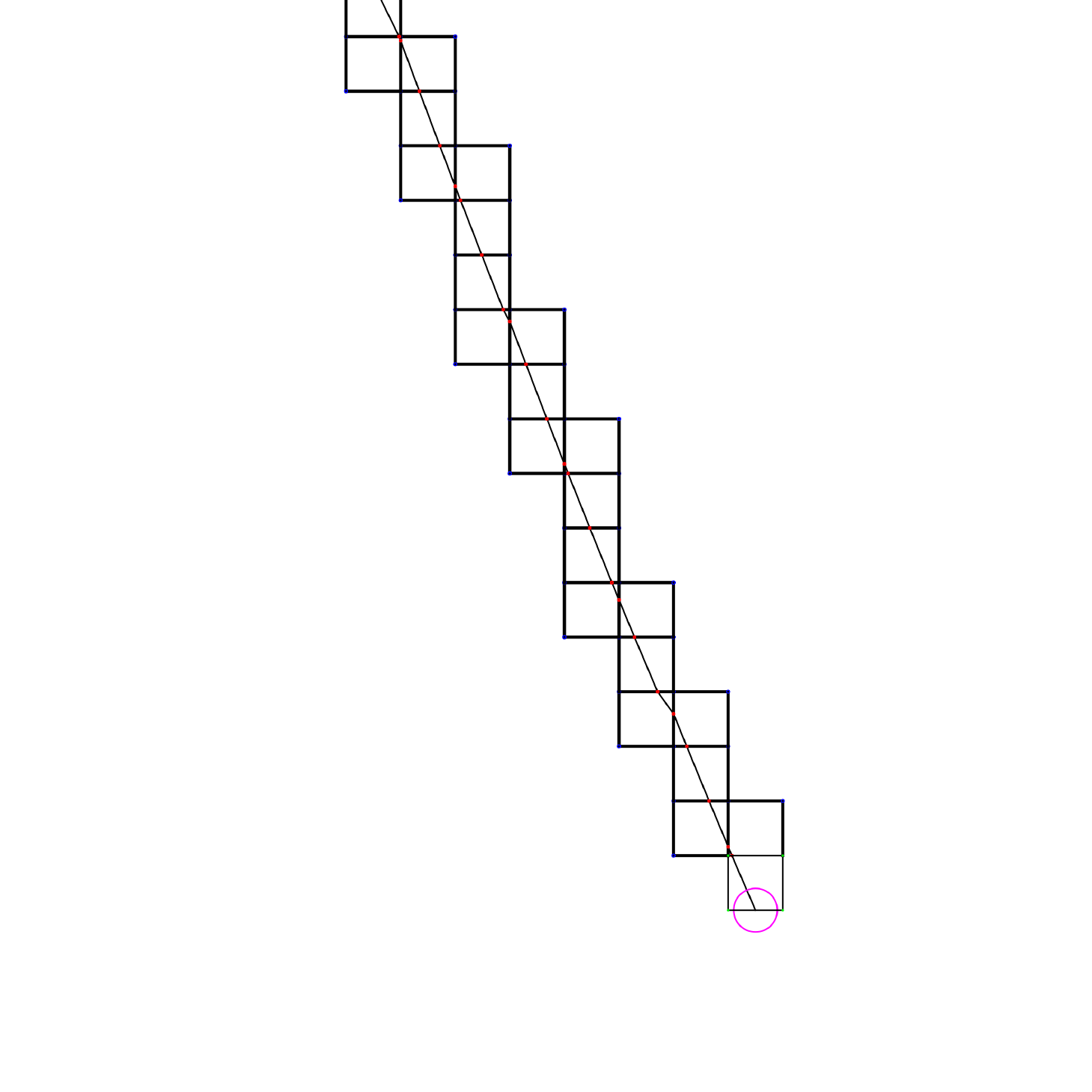}
	\end{minipage}
	\hfill
	\begin{minipage}{0.45\textwidth}
		\includegraphics[width=\textwidth]{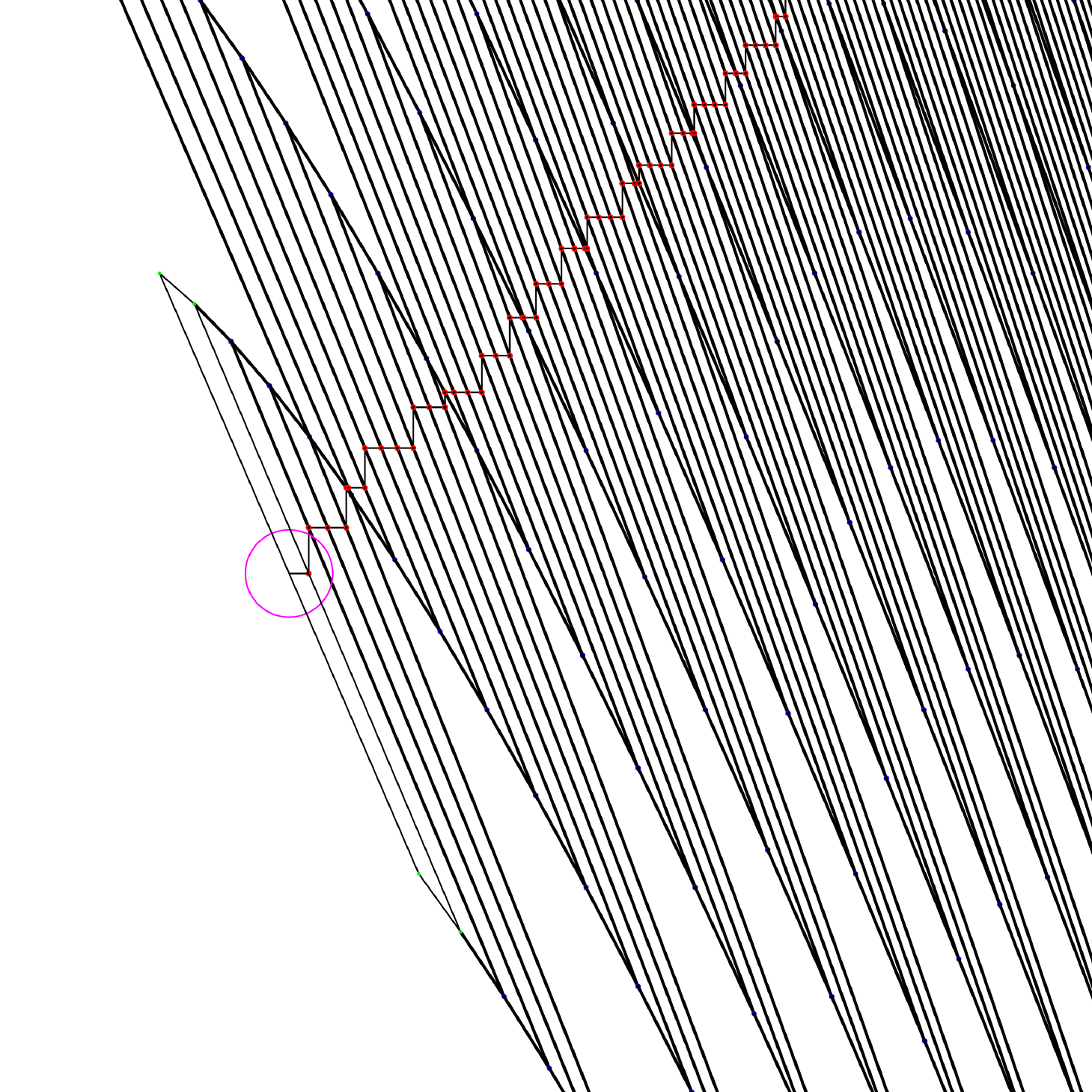}
	\end{minipage}
	\caption{Iterations of symplectic tiling billiards played on a Euclidean tiling $T_{k}$ on the left and a marked complete affine tiling $T_{K}$ on the right.  The right tiling has period parameters $K = (1,1/4)$.  The initial configurations are labeled in pink circles.  Vertices of the tiling are shown in very small blue, and the iterates $\{(p_{n},q_{n})\} \in \del T_{k}\times \del T_{K}$ are drawn in red.  The initial tile of $T_K$ is positively $P$-thin where $P$ is the unit square and the invariant vector field determines the direction $[X_{K}] = [(1/4,-1)]$.  Note the trajectories lie in the positive cone of directions as determined by $[d_{1}] = [e_{1}]$ and $[d_{2}] = [e_{2}]$ by the remarks follows Lemma \ref{lem:intersectedge}.  
	}\label{fig:symtil1} 
\end{figure}
  
In Figure \ref{fig:symtil1}, the initial configuration is $(p_{0},q_{0}) \in \del T_{k}\times \del T_{K}$ where $K =  (1,1/4)$ and is depicted in pink circles.  The point $q_{0}$ is chosen to lie on a positive edge $w_{i}^{+}$ of the initial tile $Q$ of $T_{K}$.  A choice of $[d_{1}], [d_{2}]$ as in the proof of Theorem \ref{thm:eucdiv} is given $[d_{1}] = [e_{1}]$ and $[d_{2}] = [e_{2}]$ which are both $\omega_{K}$-positive as seen by Equation \ref{eq:parone}.  Because both $\det(w_{i}^{+},d_{1})$, $\det(w_{i}^{+},d_{2}) > 0$, this means $\det(d_{1},w_{i}^{+}) < 0$.  Because $p_{0}$ lies on an oriented edge directed towards $[d_{1}] = [e_{1}]$, the first trajectory of $T_{k}$ must go directed towards $[-w_{i}^{+}]$.  \\
\\
On the right hand tiling $T_{K}$, we only ever move through the positive directions $[d_{1}] = [e_{1}]$ and $[d_{2}] = [e_{2}]$.  By Lemma \ref{lem:positivemoves}, this means after at most $(4 + 1)$-moves through adjacent tiles, the tile must thin and have its edge directions become closer to $[\pm X_{K}]$, or at worst, be a translation.  In fact, a translation is what happens in the first $5$-moves as seen in Figure \ref{fig:symtil1}.  Because the shape of the tiles on $T_{K}$ is controlled as we move through positive trajectories, we see that the iterates $q_{n}$ lie inside the cone determined by $q_{0} + (t_{1}d_{1} + t_{2}d_{2})$ where $t_{1},t_{2} \geq 0$.  This guarantees the divergence of $\lim_{n\to\infty}\omega_{K}(q_{n}-q_{0}) = \infty$.  Because the tiles become more and more thin and have their directions tend to $[\pm X_{K}]$ as seen by Lemma \ref{lem:getthin}, the trajectory of the $p_{n}$'s becomes more and more parallel to $[\pm X_{K}]$ depending on the choice of initial edge of $P$.  In this particular instance, the trajectories become more parallel to $[-X_{K}]$.  Had we started $P$ on the opposite edge, as seen in Figure \ref{fig:symtil2}, the trajectories on $T_{K}$ would be the exact same, but, the trajectories in $T_{k}$ would head towards $[X_{K}]$ instead.  
\\
\begin{figure}
	\begin{minipage}{0.45\textwidth}
		\includegraphics[width=\textwidth]{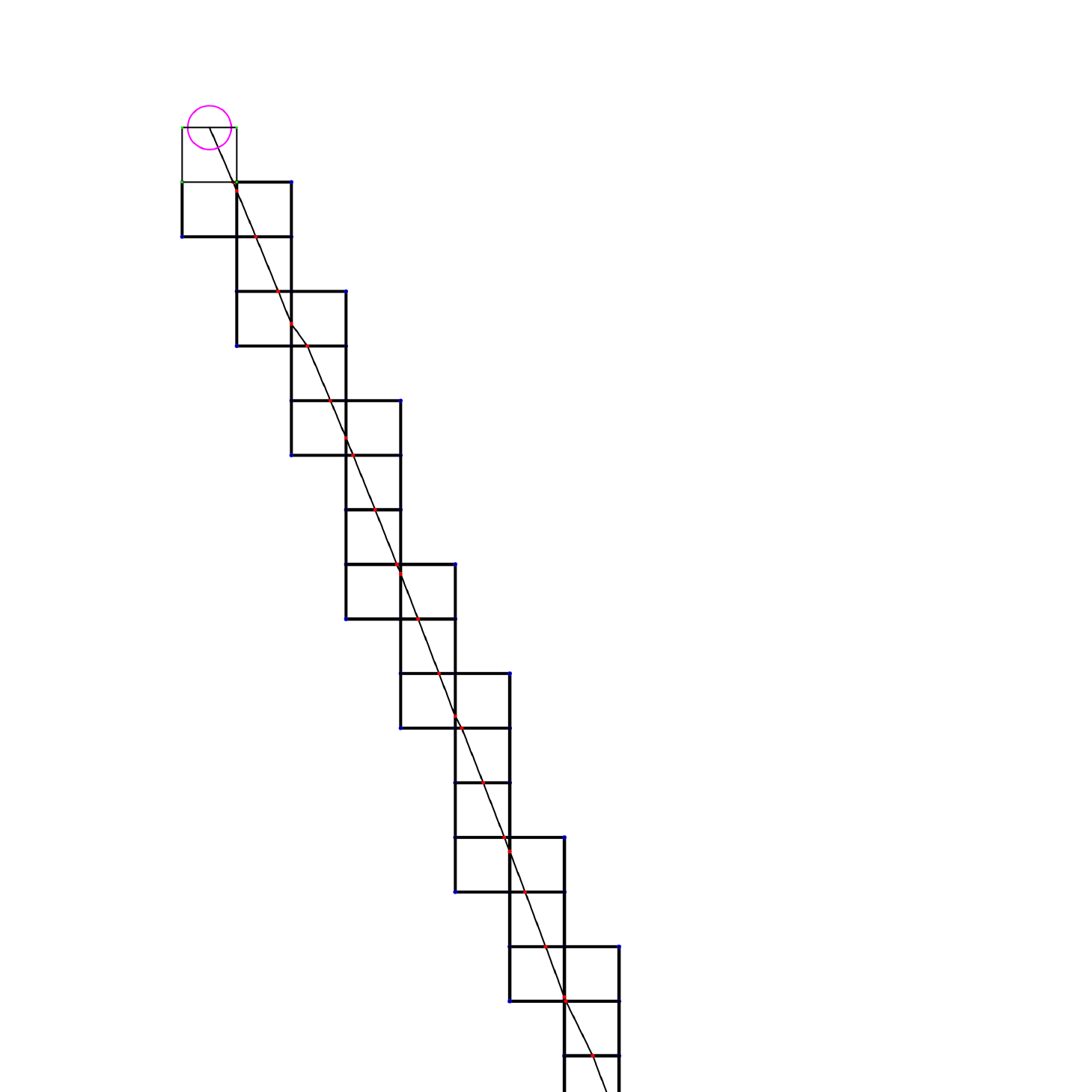}
	\end{minipage}
	\hfill
	\begin{minipage}{0.45\textwidth}
		\includegraphics[width=\textwidth]{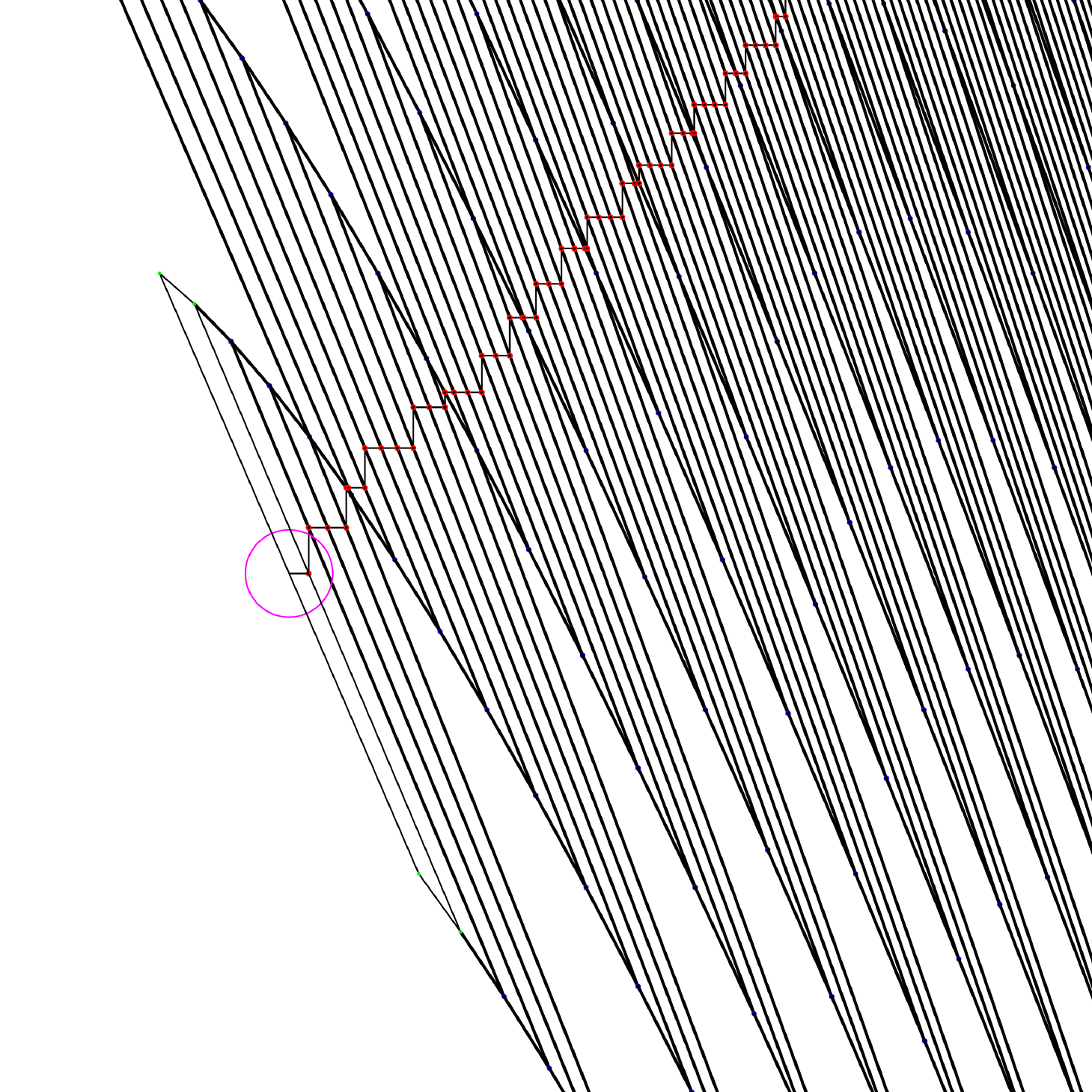}
	\end{minipage}
	\caption{Iterations of the trajectories as in Figure \ref{fig:symtil1}, but with the initial configuration of $P$ beginning on the opposite edge.  This ensures the trajectories of $T_{k}$ become more parallel to $[X_{K}] = (1/4,-1)$ instead of $[-X_{K}]$.}\label{fig:symtil2} 
\end{figure}

We remark that Theorem \ref{thm:eucdiv} shows the existence of many divergent orbits.  The proof relies on the existence of $P$-thin tiles in $T_{K}$ which are abundant in every tiling.  Interestingly enough, the authors have found this divergence to be generic.  We illustrate these findings in the following figures.  
\\
\begin{figure}
	\begin{minipage}{0.45\textwidth}
		\includegraphics[width=\textwidth]{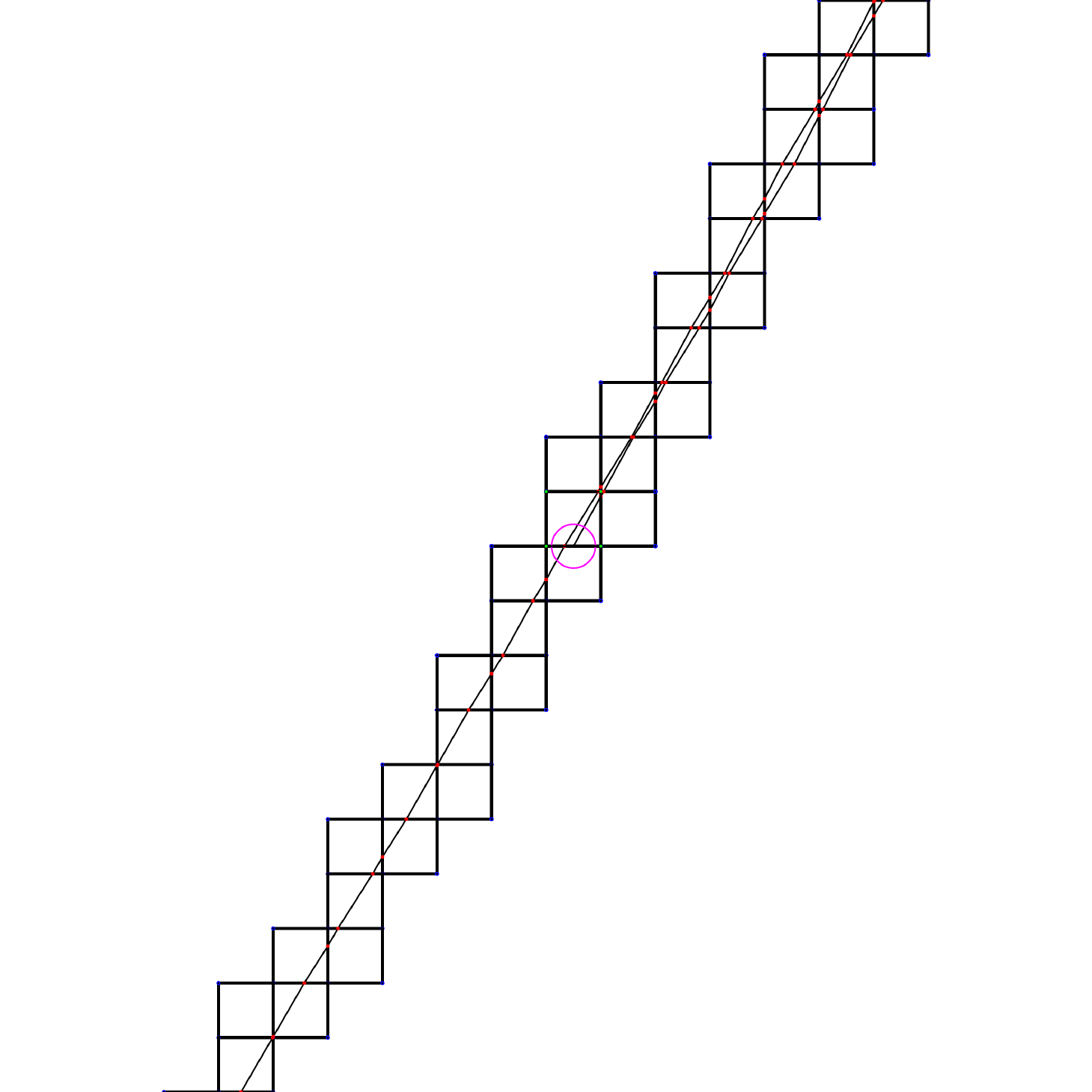}
	\end{minipage}
	\hfill
	\begin{minipage}{0.45\textwidth}
		\includegraphics[width=\textwidth]{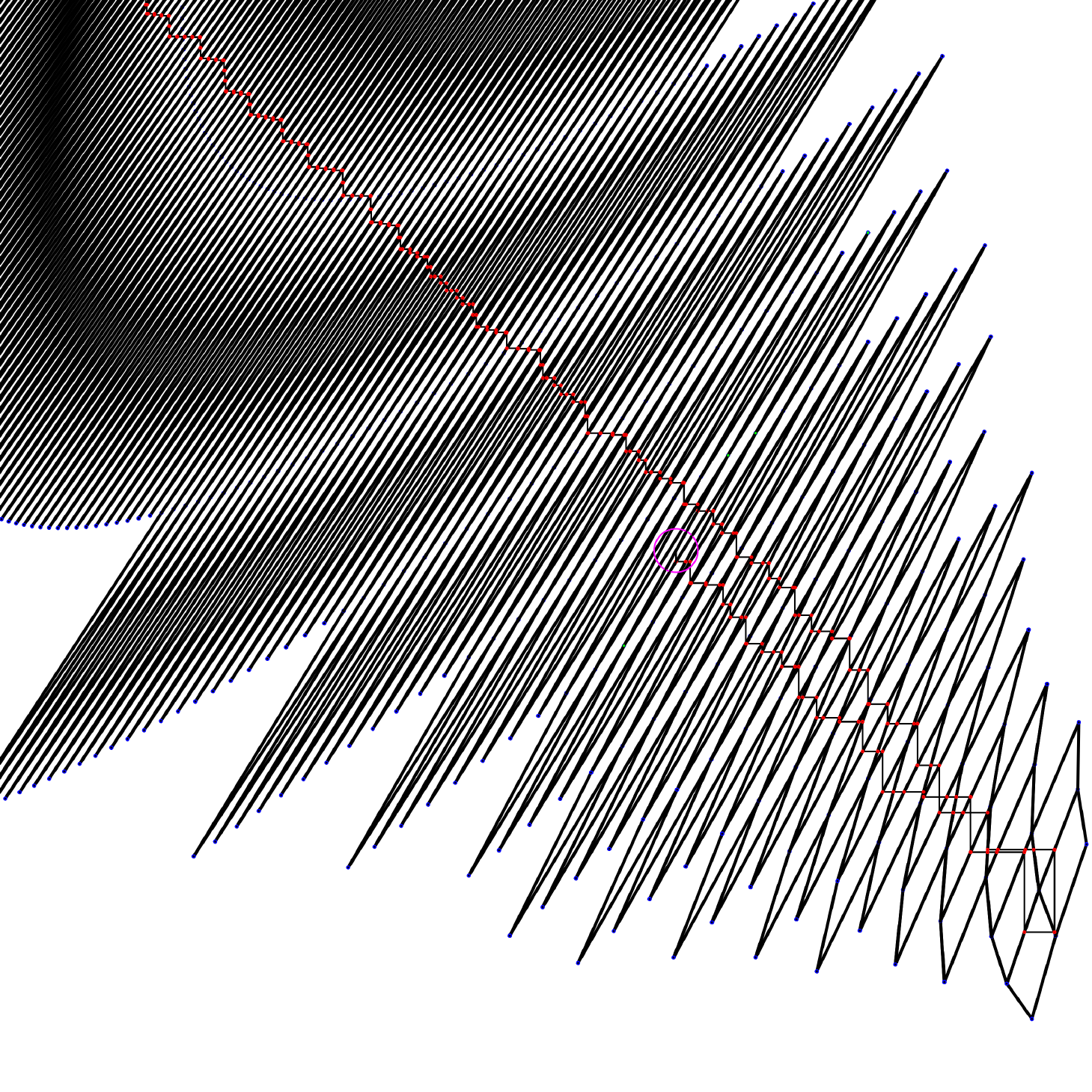}
	\end{minipage}
	\caption{Iterates of the game played on the Euclidean tiling and tiling $T_{K}$ where $K = (-6/10,1/2)$.  Here we started on a negative edge of a positively $P$-thin tile.  The initial point of the tilings are drawn in pink.  One can see the trajectory of $T_{K}$ moves to less thin tiles, then turns around going towards more thin tiles, and ultimately ends up on a positive edge of a positively $P$-thin tile, then begins to diverge.  One can see the quantity $\omega_{T_{K}}(q_{n}-q_{0})$ grow large as $q_{n}-q_{0}$ lies in the positive half-plane determined by the one-form $\omega_{T_{K}}$ as in Equation \ref{eq:parone}.  A quick sign check is given by $K\cdot(q_{n}-q_{0}) > 0$.  
	}\label{fig:symtil3} 
\end{figure}

\begin{figure}
	\begin{minipage}{0.45\textwidth}
		\includegraphics[width=\textwidth]{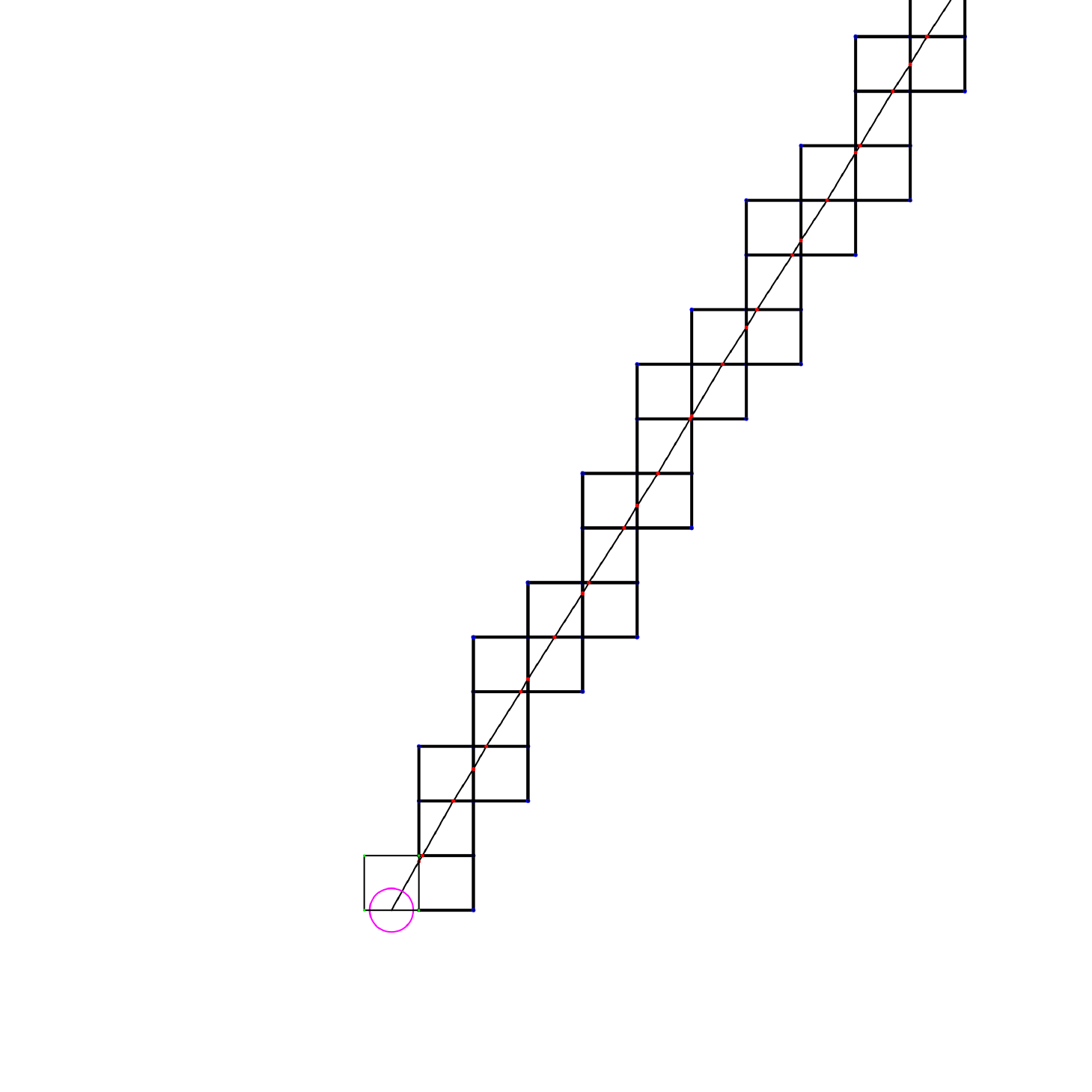}
	\end{minipage}
	\hfill
	\begin{minipage}{0.45\textwidth}
		\includegraphics[width=\textwidth]{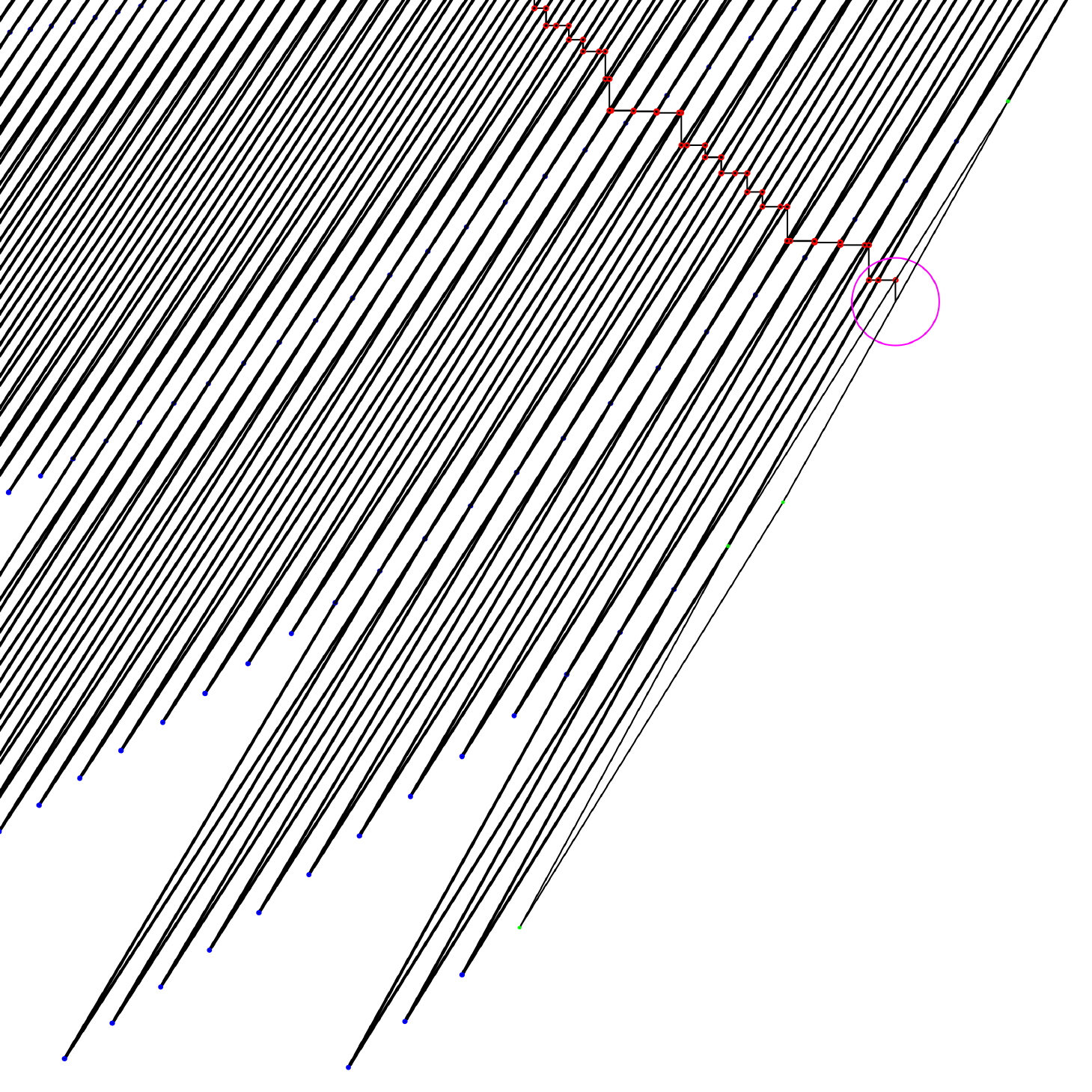}
	\end{minipage}
	\caption{Iterates of the same tiling as in Figure \ref{fig:symtil3} however the initial configuration begins on a positive edge of the $P$-thin tile this time.  Divergence is seen immediately as guaranteed by Theorem \ref{thm:eucdiv}.  
	}\label{fig:symtil4} 
\end{figure}

\begin{figure}
	\begin{minipage}{0.4\textwidth}
		\includegraphics[width=\textwidth]{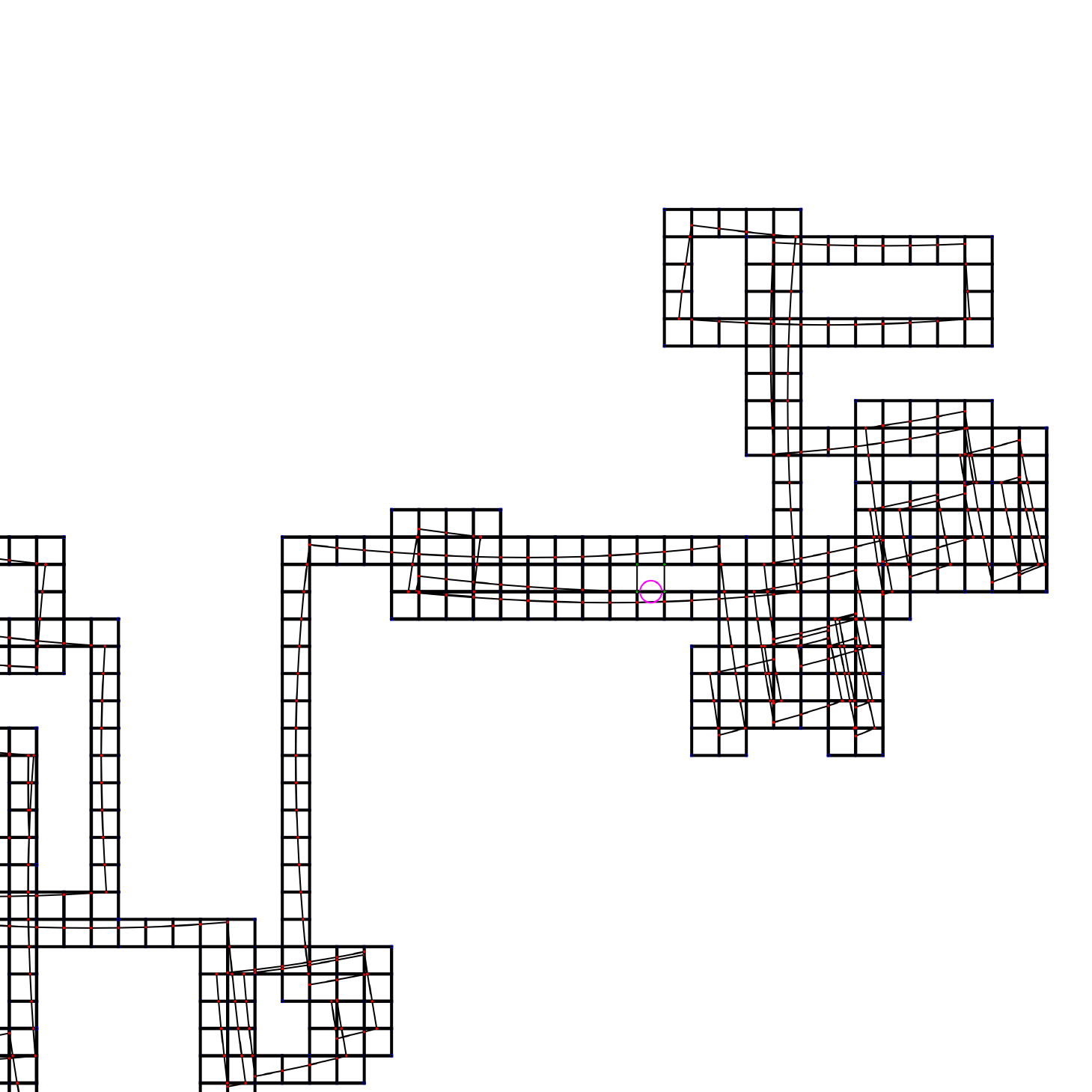}
	\end{minipage}
	\phantom{=}
	\begin{minipage}{0.4\textwidth}
		\includegraphics[width=\textwidth]{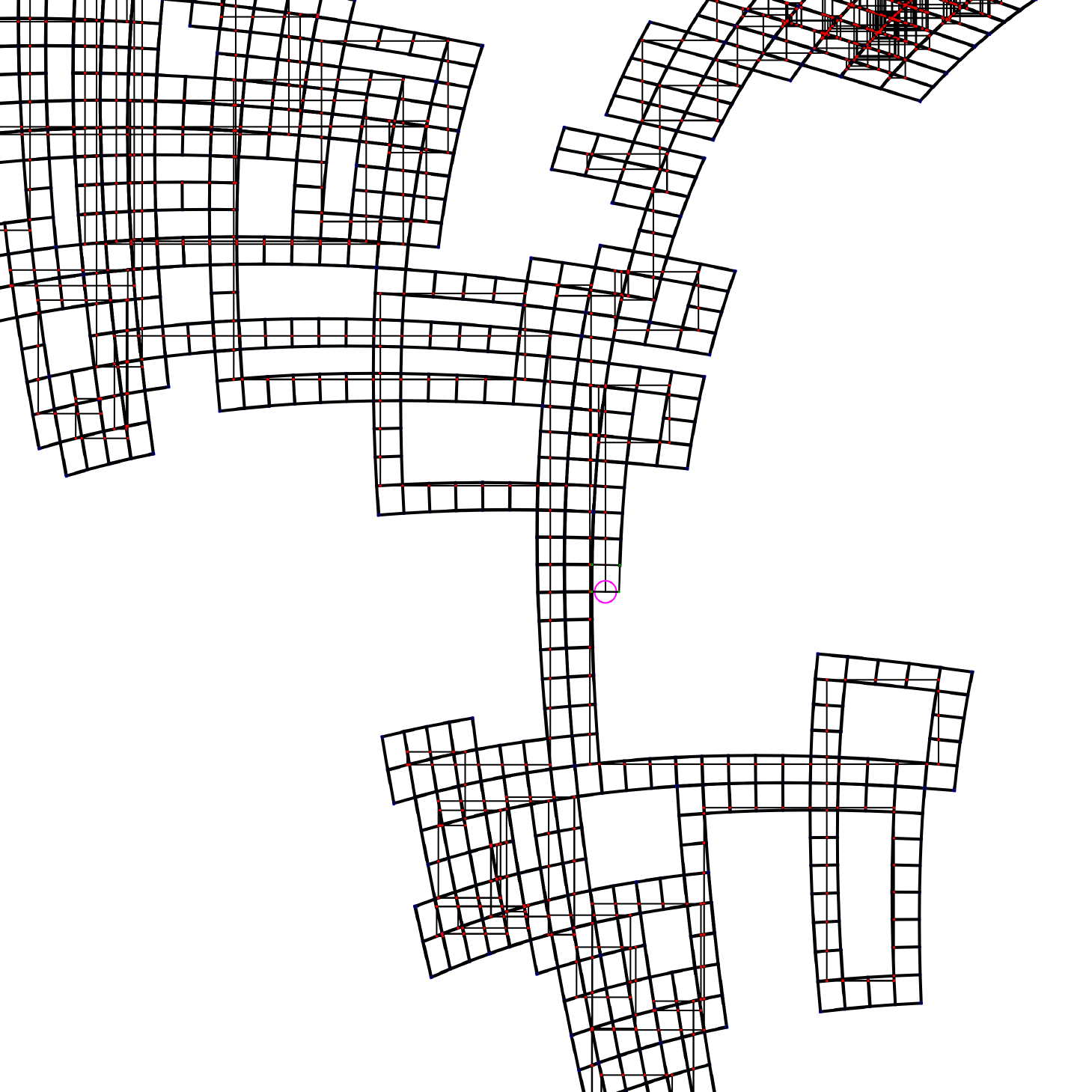}
	\end{minipage}
	\\
	\begin{minipage}{0.4\textwidth}
		\includegraphics[width=\textwidth]{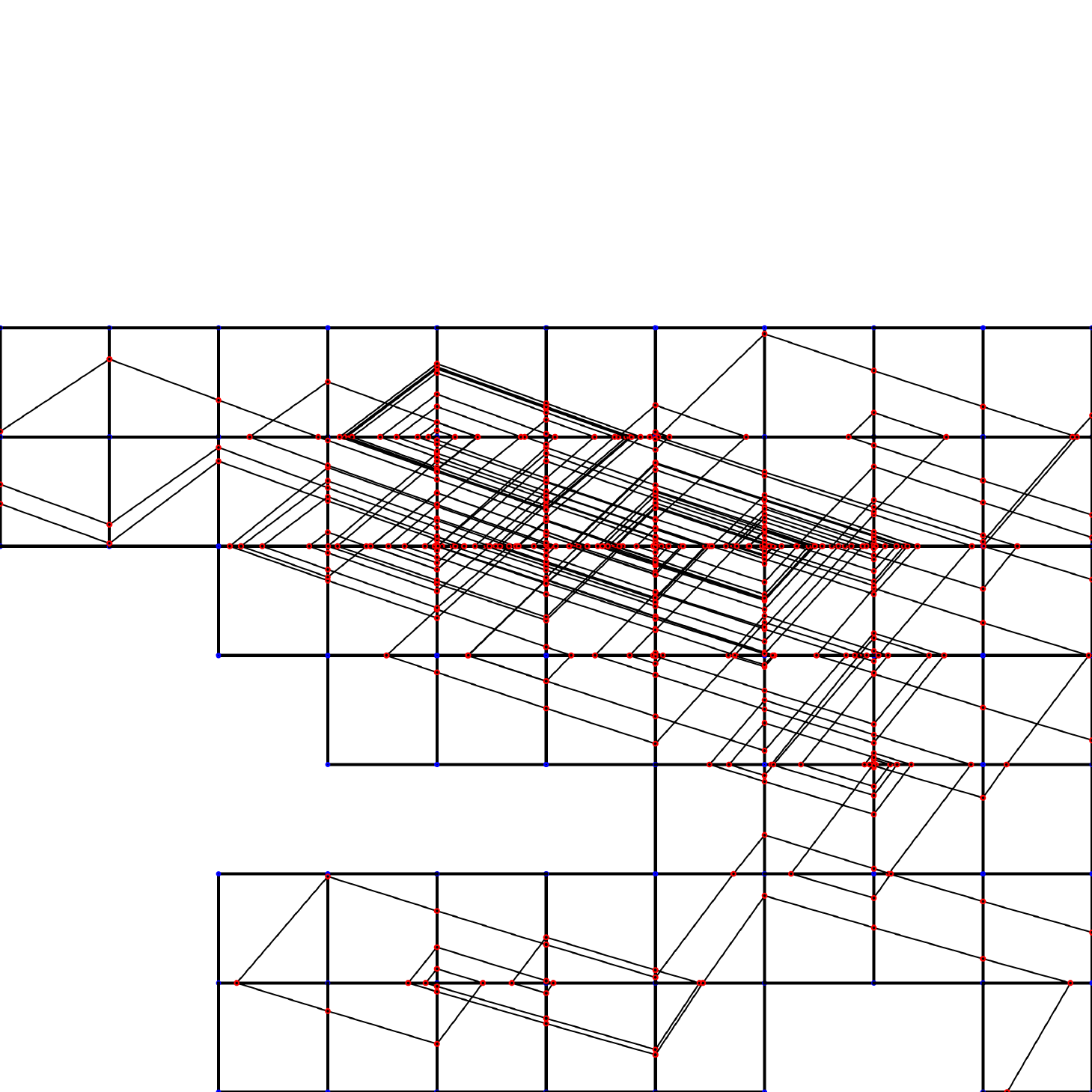}
	\end{minipage}
	\phantom{=}
	\begin{minipage}{0.4\textwidth}
		\includegraphics[width=\textwidth]{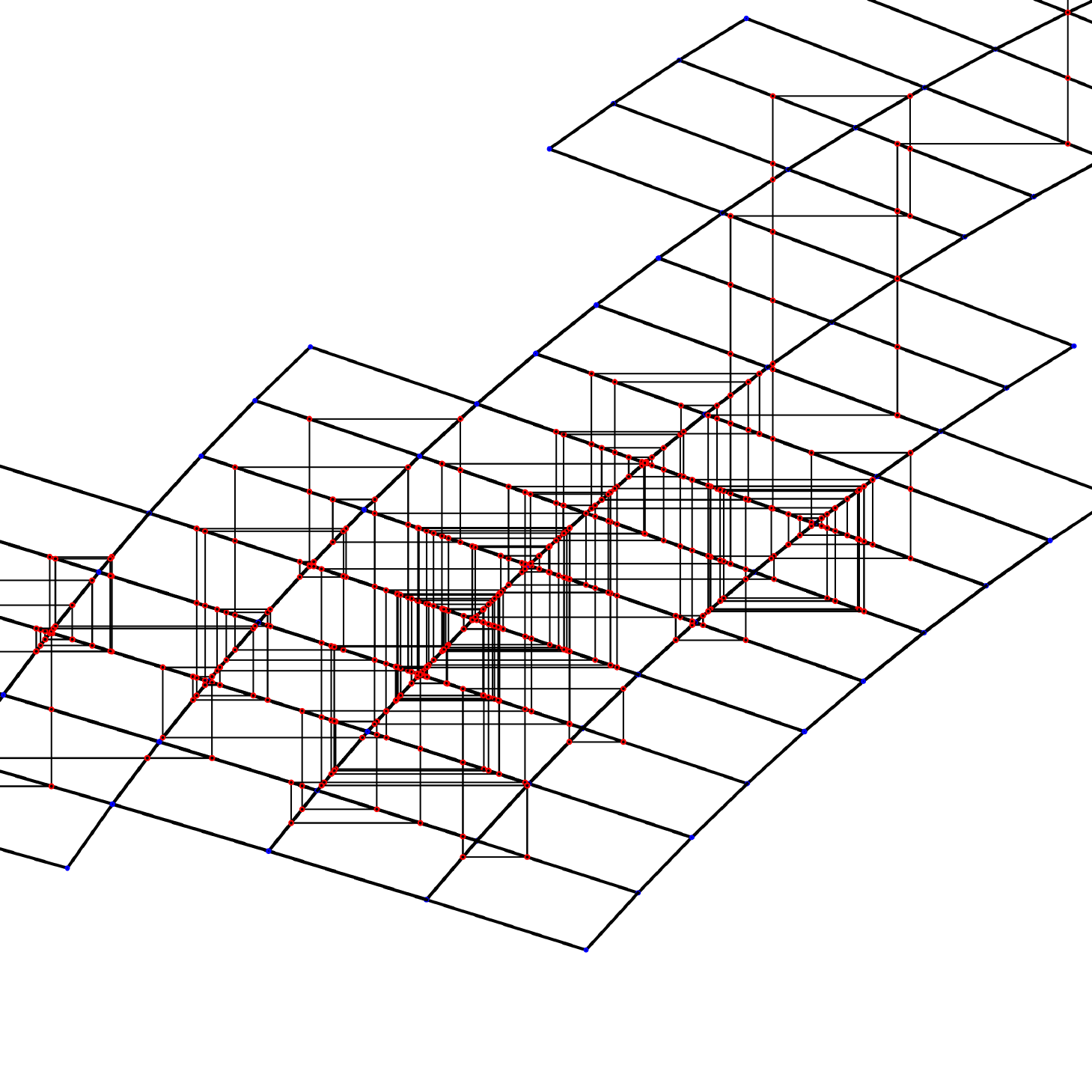}
	\end{minipage}
	\\
	\begin{minipage}{0.4\textwidth}
		\includegraphics[width=\textwidth]{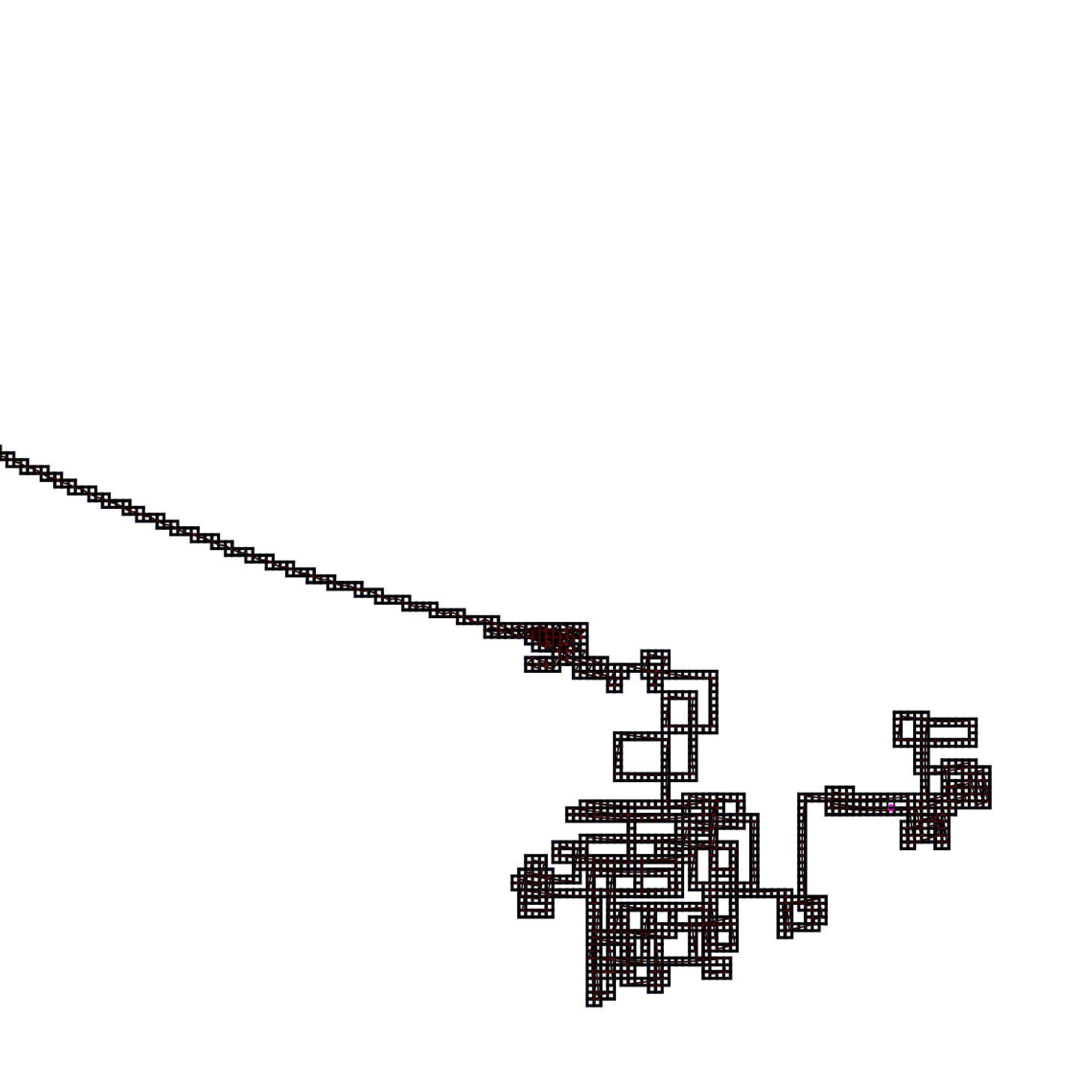}
	\end{minipage}
	\phantom{=}
	\begin{minipage}{0.4\textwidth}
		\includegraphics[width=\textwidth]{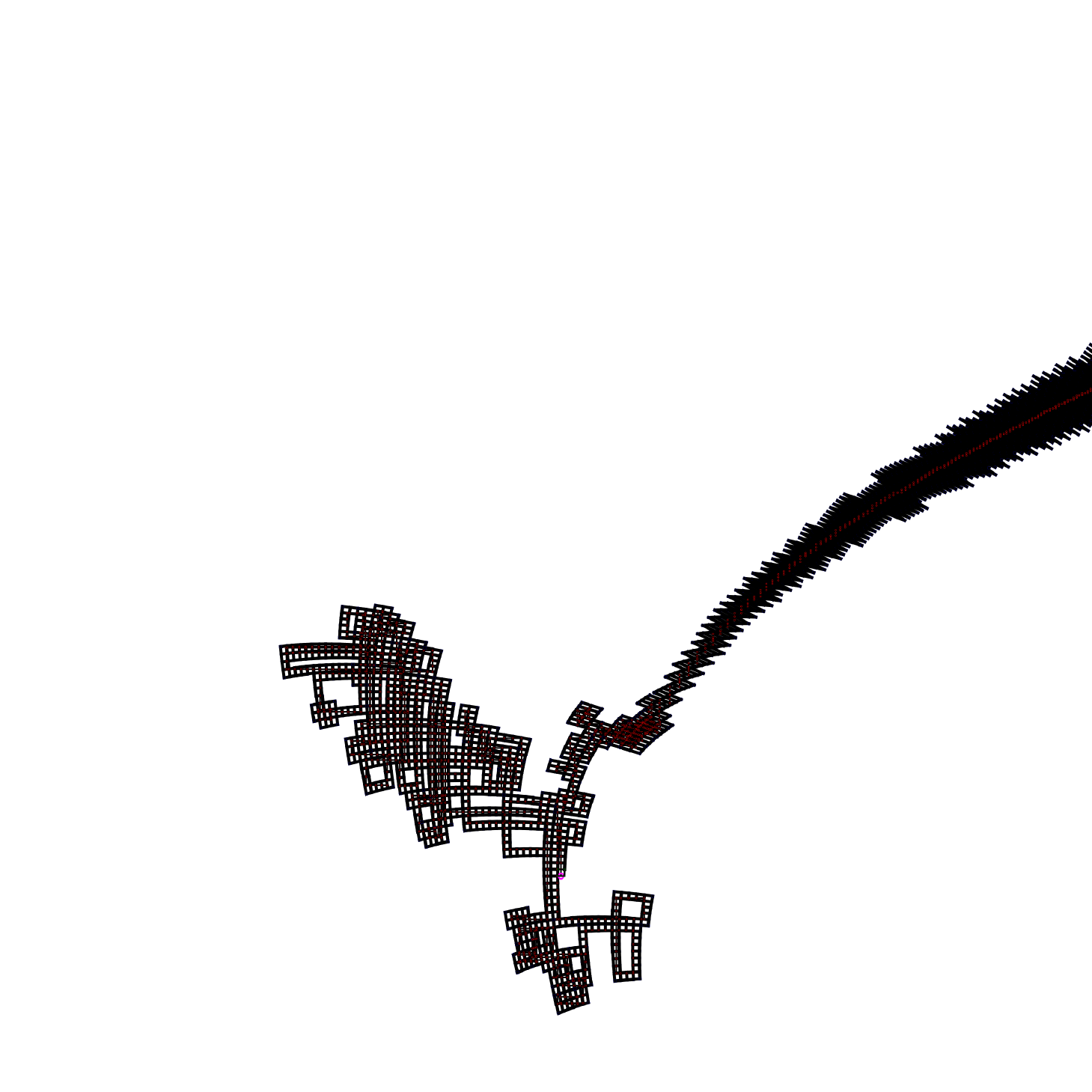}
	\end{minipage}
	\caption{Various different views of the game played on the Euclidean tiling and non-Euclidean tiling $T_{K}$ where $K = (1/4,1/4)$.  In the top row one can see the dynamics are chaotic with trajectories returning and leaving the same tiles multiple times.  In the second row is a zoomed in section of the first row where the trajectories accumulate for many iterations.  This illustrates a failed attempt at constructing a periodic orbit.  The last row is zoomed out sufficiently far to witness eventual divergence.  These images have $5,000$ iterations of the game.  This sort of meandering before divergence is witnessed best with pairs of tilings `close' to one another.  
		}\label{fig:symtil7} 
\end{figure}

While experimentation revealed orbits generically diverged, we were unable to prove anything conclusive.  Moreover the existence of periodic orbits in the presence of a non-Euclidean torus remains mysterious.  The obvious methods of starting with a pair of Euclidean tiles which one knows to admit periodic orbits and then deforming one of the tilings failed.  This is appreciably seen in Figure \ref{fig:symtil7} in the middle row.  Finally, we emphasize that our Theorem applies only to rational tori.  It would be very interesting to say something about the irrational case.  A good starting place would be to investigate when $K \in \Q(\sqrt{d})$ for some non-perfect square natural number $d$.  Exact rational arithmetic can still be readily conducted in this instance as every element will be of the form $a+b\sqrt{d}$ for $a,b\in \Q$ and inversion and multiplication are easily expressible in rational coordinates. 
\\
\\
With the Euclidean and non-Euclidean case written out carefully we move to prove the divergence in the case where the tilings are both non-Euclidean and rational.  The ideas are exactly the same as in Theorem \ref{thm:eucdiv}.  In fact, in some sense this situation is easier as the shapes of the tiles become smaller in both tilings unlike the Euclidean case where the circle of directions of the Euclidean tiling determines the four directions for every tile.

\begin{theorem}\label{thm:noneucdiv}
Let $(T_{k}, T_{K})$ be a pair of transverse non-Euclidean marked complete \emph{rational} affine tilings where the invariant vector fields $X_{T_{k}}$ and $X_{T_{K}}$ are not parallel.  Then there exist marked tiles $P$ and $Q$ and oriented edges of these tiles so that for any configuration along these edges which is defined for all time diverges along their respective one-forms $\omega_{T_{k}}$ and $\omega_{T_{K}}$.  More precisely, if $\{(p_{n}, q_{n})\}_{n=0}^{\infty} \subset \del T_{k}\times \del T_{K}$ denotes the sequence of points of the symplectic tiling billiards trajectories, then we have the following asymptotic behavior. 
\begin{equation}\label{eq:noneucasym}
\lim_{n\to\infty} \int_{p_{n}-p_{0}} \omega_{T_{k}} =  \lim_{n\to\infty} \int_{q_{n}-q_{0}} \omega_{T_{K}} = \infty
\end{equation}
\end{theorem}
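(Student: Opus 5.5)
We will run the argument of Theorem \ref{thm:eucdiv} simultaneously in both tilings, replacing the fixed unit square $P$ by a tile of $T_{k}$ that is itself pushed far out along $\omega_{T_{k}}$. Write $X:=X_{T_{k}}$ and $Y:=X_{T_{K}}$; by hypothesis these are not parallel. The key step is to choose, once and for all, two directions $[d_{1}],[d_{2}]$ that are positive for $\omega_{T_{K}}$ and lie near $[\pm X]$, and two directions $[e_{1}'],[e_{2}']$ that are positive for $\omega_{T_{k}}$ and lie near $[\pm Y]$. This is possible because $[\pm X]\neq[\pm Y]$. Since $\det(Y,\pm X)\neq 0$, exactly one of $[X],[-X]$ is $\omega_{T_{K}}$-positive. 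We take $d_{1},d_{2}$ to be small perturbations, on either side, of that one. Similarly, $e_{1}',e_{2}'$ are small perturbations of whichever of $[\pm Y]$ is $\omega_{T_{k}}$-positive.

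Next we use Lemma \ref{lem:getthin} in each tiling separately, as in Lemma \ref{lem:sqthinexists}. Far along a sequence of $\omega_{T_{k}}$-positive loops, the tile $P$ of $T_{k}$ has all four edge directions within a small neighborhood of $[\pm X]$. Its edges admit $\omega_{T_{K}}$-positive representatives lying in the arc between $[d_{1}]$ and $[d_{2}]$, and these play the role of the directions of $P$ in Definition \ref{def:pthin}. Far along $\omega_{T_{K}}$-positive loops, the tile $Q$ of $T_{K}$ has its non-negative edges between $[Y]$ and the $d_{i}$, and its non-positive edges between $[-Y]$ and the $-d_{i}$. Therefore $Q$ is positively $P'$-thin for every $P'$ whose edges point near $[\pm X]$. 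Symmetrically, $P$ is positively $Q'$-thin relative to $\omega_{T_{k}}$ for every $Q'$ whose edges point near $[\pm Y]$. This thinness is preserved under further positive loops, since edges only move clockwise towards the invariant directions. We also arrange, as in Lemma \ref{lem:sqthinexists}, that all tiles within $2N_{k}$ of $P$, respectively $2N_{K}$ of $Q$, stay thin. Here $N_{k}$ and $N_{K}$ are the constants of Lemma \ref{lem:positivemoves} for the two tilings. We then start $(p_{0},q_{0})$ on non-negative edges of $P$ and $Q$, chosen generically by Lemma \ref{lem:gen_good}.

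The induction now mirrors Theorem \ref{thm:eucdiv}, but runs symmetrically. In $T_{K}$, the particle moves along the edge direction of the current $T_{k}$-tile, which is $\omega_{T_{K}}$-positive and lies between $d_{1}$ and $d_{2}$. Lemma \ref{lem:intersectedge} therefore forces it to exit through a non-positive edge and land on a non-negative edge of the next tile. In $T_{k}$, the particle moves along the edge direction of the new $T_{K}$-tile, which is near $[\pm Y]$ and hence $\omega_{T_{k}}$-positive after the sign is fixed by the orientation of the current tile, and the same lemma applies. Lemma \ref{lem:positivemoves} in each rational tiling guarantees that within $N_{k}$, respectively $N_{K}$, moves we reach a tile $\beta Q$ or $\alpha P$ whose loop pairs non-negatively with the one-form. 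Lemma \ref{lem:getthin} and the clockwise monotonicity then show that the thinness hypotheses are restored. Consequently $q_{n}$ stays in the cone $q_{0}+\mathrm{cone}(d_{1},d_{2})$ and $p_{n}$ stays in $p_{0}+\mathrm{cone}(e_{1}',e_{2}')$. Since every step has length bounded below (the tiles passed through are among finitely many shapes up to translation), both one-form integrals tend to $\infty$, which gives Equation \ref{eq:noneucasym}.

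The main obstacle is the coupling. In Theorem \ref{thm:eucdiv} the directions of $P$ were fixed, whereas now the directions available to $q_{n}$ change as $P$ moves, and vice versa. We must check that the cones stay fixed, namely that edges of later $T_{k}$-tiles remain between $d_{1}$ and $d_{2}$. This is exactly the clockwise monotonicity after Definition \ref{def:posneg_dir}, combined with the "translate or closer" conclusion of Lemma \ref{lem:positivemoves}. We should first verify that the intermediate, possibly non-positive, steps, of which there are at most $N$ between returns, stay inside the $2N$-neighborhood buffer. We must also check the signs: the orientation of each successive $T_{k}$-tile must select the $\omega_{T_{K}}$-positive representative rather than its negative. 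For this we will use that $p_{n}$ always lies on a non-negative edge, so the direction pointing into the tile is consistent.
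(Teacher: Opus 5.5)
Your overall route is the paper's. The paper normalizes so that $X_{T_k}=e_1$ and $X_{T_K}=e_2$, then works with neighbourhoods $U_1,U_2$ of $[\pm e_1],[\pm e_2]$; this is the same as your fixed arcs $[d_1,d_2]$ and $[e_1',e_2']$ around the correctly signed invariant directions. It then gets mutual positive thinness from Lemma \ref{lem:getthin}, keeps a $2M$ buffer, and iterates with Lemmas \ref{lem:intersectedge} and \ref{lem:positivemoves} to confine $p_n$ and $q_n$ to positive cones, as you propose.

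The genuine problem is your final step. The claim that the visited tiles come from finitely many shapes up to translation is false, and it contradicts your conclusion. Every tile of $T_K$ has the same $\omega_{T_K}$-width, by invariance of $\omega_{T_K}$. So $\omega_{T_K}(q_n)\to\infty$ forces $\omega_{T_K}(T_{T_K}(\beta_n))\to\infty$. Lemma \ref{lem:getthin} then makes the non-neutral edge directions of $\beta_nQ$ converge to $[\pm X_{T_K}]$ without ever reaching it, since the linear holonomy preserves $\omega_{T_K}$. Hence infinitely many shapes occur; this thinning is the whole mechanism of the proof. Nor is there a lower bound on step length. Near the vertex where a non-negative edge meets a non-positive one, a thin tile is a narrow wedge, and a crossing that starts close to that vertex is arbitrarily short. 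Cone containment plus monotonicity of $\omega_{T_K}$ does not by itself exclude a bounded, convergent orbit, so you need a separate argument (the paper also passes quickly over this point).

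Here is one that uses only what your induction already provides.
\begin{itemize}
\item Every segment $q_mq_{m+1}$ has direction in the arc between $d_1$ and $d_2$.
\item If $q_n$ lies on an edge of direction $u$ that is non-positive for the tile just exited, your thinness gives $\det(u,d)<0$ for every $d$ in that arc.
\item Hence $x\mapsto\det(u,x)$ is strictly decreasing along the polygonal path $q_0q_1q_2\cdots$, so the path meets the line of that edge only at $q_n$.
\item Therefore distinct $q_n$ lie on distinct edge lines. A bounded region meets only finitely many edges of the locally finite tiling, so $|q_n|\to\infty$.
\item Since $q_n-q_0\in\mathrm{cone}(d_1,d_2)$ with $d_1,d_2$ strictly $\omega_{T_K}$-positive, $\omega_{T_K}(q_n-q_0)\geq c\,|q_n-q_0|\to\infty$ for some $c>0$.
\end{itemize}
The same argument works in $T_k$.

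A smaller slip is in your last paragraph. The sign of the $T_K$-move is selected by the $T_K$-tile containing $q_n$, not by the $T_k$-tile or by where $p_n$ sits. The rule takes the direction pointing into that tile, and $\det(w^{+},d)>0$ on its non-negative edge makes that the positive representative. Your statement about $p_n$ is the symmetric fact for the $T_k$-move.
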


\begin{proof}
As observed in the proof of Theorem \ref{thm:eucdiv}, we may apply a simultaneous affine transformation to both $T_{k}$ and $T_{K}$ to obtain two new tilings where the dynamics are equivalent.  To this end, observe $(X_{T_{k}}, X_{T_{K}})$ is a basis for $\R^{2}$ and without loss of generality, let us assume it is oriented so there is an orientation preserving linear map taking $X_{T_{k}}$ to $e_{1}$ and $X_{T_{K}}$ to $e_{2}$.  The tilings obtained by applying this map simultaneously to both are still transverse and have the property that the invariant vector fields are $e_{1}$ and $e_{2}$ respectively.  We may thus reduce to the case where $T_{k}$ has $X_{T_{k}} = e_{1}$ and $T_{K}$ has $X_{T_{K}} = e_{2}$.  
\\
\\
As done in the proof of Theorem \ref{thm:eucdiv}, let $N_{1} := (a_{1}/c_{1},b_{1}/c_{1})$ and $N_{2} := (a_{2}/c_{2},b_{2}/c_{2})$ where both pairs of  $a_{1},b_{1} \in \Z$ and $a_{2},b_{2} \in \Z$ are relatively prime, and $c_{1},c_{2} \in \Q$ are reduced rationals.  Let $M := \text{lcm}\{|a_{1}|+|b_{1}|,|a_{2}|+|b_{2}|\}$.   \\
\\
Pick small open intervals of $[e_{1}], [e_{2}] \in \fD$ in the circle of directions and close them under their negatives to get small open (disconnected) neighborhoods about both $[\pm e_{1}], [\pm e_{2}] \in \fD$.  Call these open neighborhoods $[\pm e_{1}] \in U_{1}$ and $[\pm e_{2}] \in U_{2}$ respectively.  By picking a sequence of loops so that $\lim_{n\to\infty}\omega_{T_{k}}(T_{T_{k}}(\gamma_{n}))$ tends to infinity, we can find a marked tile $P$ of $T_{k}$ so that its edge directions satisfy $[v_{1}^{+}], [v_{2}^{+}], [v_{1}^{-}], [v_{2}^{-}] \in U_{1}$ where $[v_{1}^{+}], [v_{2}^{+}]$ are very close to $[e_{1}]$ and $[v_{1}^{-}], [v_{2}^{-}]$ are very close to $[-e_{1}]$.  Similarly we may do the same for $T_{K}$ to find a marked tile $Q$ so that its edge directions satisfy $[w_{1}^{+}], [w_{2}^{+}], [w_{1}^{-}], [w_{2}^{-}] \in U_{2}$ with $[w_{1}^{+}], [w_{2}^{+}]$ close to $[e_{2}]$ and $[w_{1}^{-}], [w_{2}^{-}]$ close to $[-e_{2}]$.  See Figure \ref{fig:bothpqthin} below which illustrates this set up.
\\
\begin{figure}
\begin{center}
\begin{tikzpicture}[scale=3]

	\def\eps{5}

  	\draw[thick] (0,0) circle (1);
	
  	\draw[line width=4pt, blue]
    		({cos(-5*\eps)},{sin(-5*\eps)})
    		arc[
      			start angle=-5*\eps,
      			end angle=5*\eps,
      			radius=1
    		];
		
  	\draw[line width=4pt, blue]
    		({cos(180-5*\eps)},{sin(180-5*\eps)})
    		arc[
      			start angle=180-5*\eps,
      			end angle=180+5*\eps,
      			radius=1
    		];

  	\draw[line width=4pt, orange]
    		({cos(90-5*\eps)},{sin(90-5*\eps)})
    		arc[
      			start angle=90-5*\eps,
      			end angle=90+5*\eps,
      			radius=1
    		];
		
  	\draw[line width=4pt, orange]
    		({cos(270-5*\eps)},{sin(270-5*\eps)})
    		arc[
      			start angle=270-5*\eps,
      			end angle=270+5*\eps,
      			radius=1
    		];
		
  	\draw[dotted, line width=0.5pt, ->]
    		(0,0) -- ({cos(0)},{sin(0)});
		
  	\draw[line width=1pt, ->]
    		(0,0) -- ({cos(0+\eps)},{sin(0+\eps)});
    		\node[xshift=30pt, yshift=-10pt] at ({cos(\eps)},{sin(\eps)}) {$[-v_2^{-}]$};
		
  	\draw[line width=1pt, ->]
    		(0,0) -- ({cos(2*\eps)},{sin(2*\eps)});
    		\node[xshift=30pt, yshift=0pt] at ({cos(2*\eps)},{sin(2*\eps)}) {$[-v_1^{-}]$};
		
  	\draw[line width=1pt, ->]
    		(0,0) -- ({cos(3*\eps)},{sin(3*\eps)});
		\node[xshift=25pt, yshift=10pt] at ({cos(3*\eps)},{sin(3*\eps)}) {$[v_1^{+}]$};

  	\draw[line width=1pt, ->]
    		(0,0) -- ({cos(4*\eps)},{sin(4*\eps)});
    		\node[xshift=25pt, yshift=+20pt] at ({cos(4*\eps)},{sin(4*\eps)}) {$[v_2^{+}]$};

  	\draw[dotted, line width=0.5pt, ->]
    		(0,0) -- ({cos(90)},{sin(90)});
		
  	\draw[line width=1pt, ->]
    		(0,0) -- ({cos(90+\eps)},{sin(90+\eps)});
    		\node[xshift=+10pt, yshift=20pt] at ({cos(90+\eps)},{sin(90+\eps)}) {$[-w_2^{-}]$};
		
  	\draw[line width=1pt, ->]
    		(0,0) -- ({cos(90+2*\eps)},{sin(90+2*\eps)});
    		\node[xshift=-20pt, yshift=20pt] at ({cos(90+2*\eps)},{sin(90+2*\eps)}) {$[-w_1^{-}]$};
		
  	\draw[line width=1pt, ->]
    		(0,0) -- ({cos(90+3*\eps)},{sin(90+3*\eps)});
		\node[xshift=-40pt, yshift=15pt] at ({cos(90+3*\eps)},{sin(90+3*\eps)}) {$[w_1^{+}]$};

  	\draw[line width=1pt, ->]
    		(0,0) -- ({cos(90+4*\eps)},{sin(90+4*\eps)});
    		\node[xshift=-60pt, yshift=5pt] at ({cos(90+4*\eps)},{sin(90+4*\eps)}) {$[w_2^{+}]$};

  	\fill (0,0) circle (0.025);

\end{tikzpicture}

\caption{A schematic of choosing tiles $P$ and $Q$ from $T_{k}$ and $T_{K}$ where $Q$ is $\omega_{T_{K}}$-positively $P$-thin and $P$ is $\omega_{T_{k}}$-positively $Q$-thin.  A direction is $\omega_{T_{k}}$-positive if it lies in the upper-half semi-circle of directions, and a direction is $\omega_{T_{K}}$-positive if it lies in the left-half semi-circle of directions.  This figure illustrates how the $[w_{i}^{+}]$ and $[-w_{i}^{-}]$ are all $\omega_{T_{k}}$-positive, and they all satisfy $\det(v_{i}^{+},\cdot ) > 0$ showing $P$ is $\omega_{T_{k}}$-positively $Q$-thin.  By taking the negatives of the illustrated $v$'s, one can see that $Q$ is $\omega_{T_{K}}$-positively $P$-thin.
}\label{fig:bothpqthin}
\end{center}
\end{figure}

We claim $P$ is $\omega_{T_{k}}$-positively $Q$-thin.  One can choose the directions $[f_{1}], [f_{2}], [f_{3}], [f_{4}]$ representing the edges of $Q$ to be $[w_{1}^{+}], [w_{2}^{+}], [-w_{1}^{-}], [-w_{2}^{-}]$.  By construction all of these directions are $\omega_{T_{k}}$-positive because they lie in the upper semicircle in $\fD$ close to $[e_{2}]$.  Moreover, for the two non-negative edges $v_{i}^{+}$ of $P$ close to $[e_{1}]$, it is clear that $\det(v_{i}^{+}, f_{j}) > 0$.  By similar arguments, there are choices of edge directions $[d_{1}], [d_{2}], [d_{3}], [d_{4}]$ representing the edges of $P$ so that $Q$ is $\omega_{T_{K}}$-positively $P$-thin with respect to these directions.\\
\\ 
Using Lemma \ref{lem:getthin}, we may choose tiles $P'$ and $Q'$ even further away so that for all $\gamma = (n,m)$ satisfying $|n| + |m| < 2M$, we have $\gamma P'$ is $Q$-thin and $\gamma Q'$ is $P$-thin.  In particular this can be done so all the edge directions of $\gamma P'$ lie in $U_{1}$ closer to $[\pm e_{1}]$ than the edges of $P'$, and, all the edge directions of $\gamma Q'$ lie in $U_{2}$ closer to $[\pm e_{2}]$ than the edges of $Q'$, for all such $\gamma = (n,m)$ satisfying $|n| + |m| < 2M$ as in Figure \ref{fig:getsmaller}.\\
\\
Now pick either $\omega_{T_{k}}$-non-negative edge of $P'$ and either $\omega_{T_{K}}$-non-negative edge of $Q'$.  Pick a pair of interior points on these edges $(p_{0},q_{0})$ so that all forward iterations are well-defined and label them $\{(p_{n},q_{n})\}_{n=0}^{n=\infty}$ as was done before.  We proceed as in the proof of Theorem \ref{thm:eucdiv}.  By construction $P'$ is $\omega_{T_{k}}$-positively $Q'$-thin, as the edge directions of $Q'$ are closer to $[\pm e_{2}]$ than $Q$'s.  The tile $Q'$ has four $\omega_{T_{k}}$-positive directions $[f_{1}], [f_{2}], [f_{3}], [f_{4}]$, along which any trajectory starting at $p_{0}$ must intersect $\del P'$ at a non-positive edge by Lemma \ref{lem:intersectedge}.  This new tile $\alpha_{1}P'$ contains this intersection point $p_{1}$, and, is now contained in a non-negative edge.  \\
\\
On $Q'$ we begin at $q_{0}$.  By construction $Q'$ is $\omega_{T_{K}}$-positively $\alpha_{1}P'$-thin, as the edge directions of $\alpha_{1}P'$ are closer to $[\pm e_{1}]$ than $P$'s.  The tile $\alpha_{1}P'$ has four $\omega_{T_{K}}$-positive directions $[d_{1}], [d_{2}], [d_{3}], [d_{4}]$, along which any trajectory starting at $q_{0}$ must intersect $\del Q'$ at a non-positive edge by Lemma \ref{lem:intersectedge}.  This new tile $\beta_{1}Q'$ contains this intersection point $q_{1}$, and, is now contained in a non-negative edge. 
\\
\\
This process gives us our new pair $(p_{1},q_{1})$ in the tiles $\alpha_{1}P'$ and $\beta_{1}Q'$.  By applying the same arguments we may iterate this process to obtain pairs $(p_{n},q_{n})$ while only moving through positive directions relative to each tiling.  As was argued in Theorem \ref{thm:eucdiv}, we may do this in such a way that the $T_{k}$-tiles remain positively $\beta_{i}Q'$-thin, and the $T_{K}$-tiles remain positively $\alpha_{i}P'$-thin for at least $2M$-iterations.  By applying Lemma \ref{lem:positivemoves} to both $T_{k}, T_{K}$, after at most $M < 2M$-iterations, there will be a pair of loops $(\alpha_{i},\beta_{i})$ for which both pairs of edge directions of $\alpha_{i}P'$ and $\beta_{i}Q'$ move closer to $[\pm e_{1}]$ and $[\pm e_{2}]$, or, remain the same.  In either case, this new pair of marked tiles $\alpha_{i}P'$ and $\beta_{i}Q'$ will still be  $\omega_{T_{k}}$-positively $Q'$-thin and $\omega_{T_{K}}$-positively $P'$-thin respectively.  Because the edge directions are closer to their respective limits, they too satisfy this property being stable after $2M$-iterations.  \\
\\
Thus the sequences $p_{n}$, $q_{n}$ only ever move in their respective positive directions and are contained in cones.  By the same arguments in Theorem \ref{thm:eucdiv}, this means the sequences diverge and satisfy the asymptotic divergence as in Equation \ref{eq:noneucasym}.  Because no hypothesis was made on the points on the positive edges, this divergence is generic amongst pairs of positive edges and tiles satisfying the hypotheses in the beginning, and thus, stable.  
\end{proof}

We conclude this work with remarks and illustrations of Theorem \ref{thm:noneucdiv}.  As noted, the techniques used to prove this are the same ideas as in Theorem \ref{thm:eucdiv}, and thus suffer from the same inability to account for what happens in the irrational case, and, a lack of showing the behavior to be generic.  In some sense both theorems are a stability result about positive $P$-thinness and in essence boil down to formally defining what it means for two polygons to be thin relative to one another in the affine context.  Below in Figure \ref{fig:symtil5} we illustrate an example of two tiles which are thin relative to one another and the divergence of their trajectories in Figure \ref{fig:symtil6} as guaranteed by Theorem \ref{thm:noneucdiv}.  As was the case for the Euclidean and non-Euclidean tiling billiards, we observed that experimentally all orbits eventually diverge, however the trajectories are frequently parabolic as seen in Figure \ref{fig:symtil10}.  This behavior was not observed in the Euclidean and non-Euclidean tile pairing.  
\\
\begin{figure}
	\begin{minipage}{0.45\textwidth}
		\includegraphics[width=\textwidth]{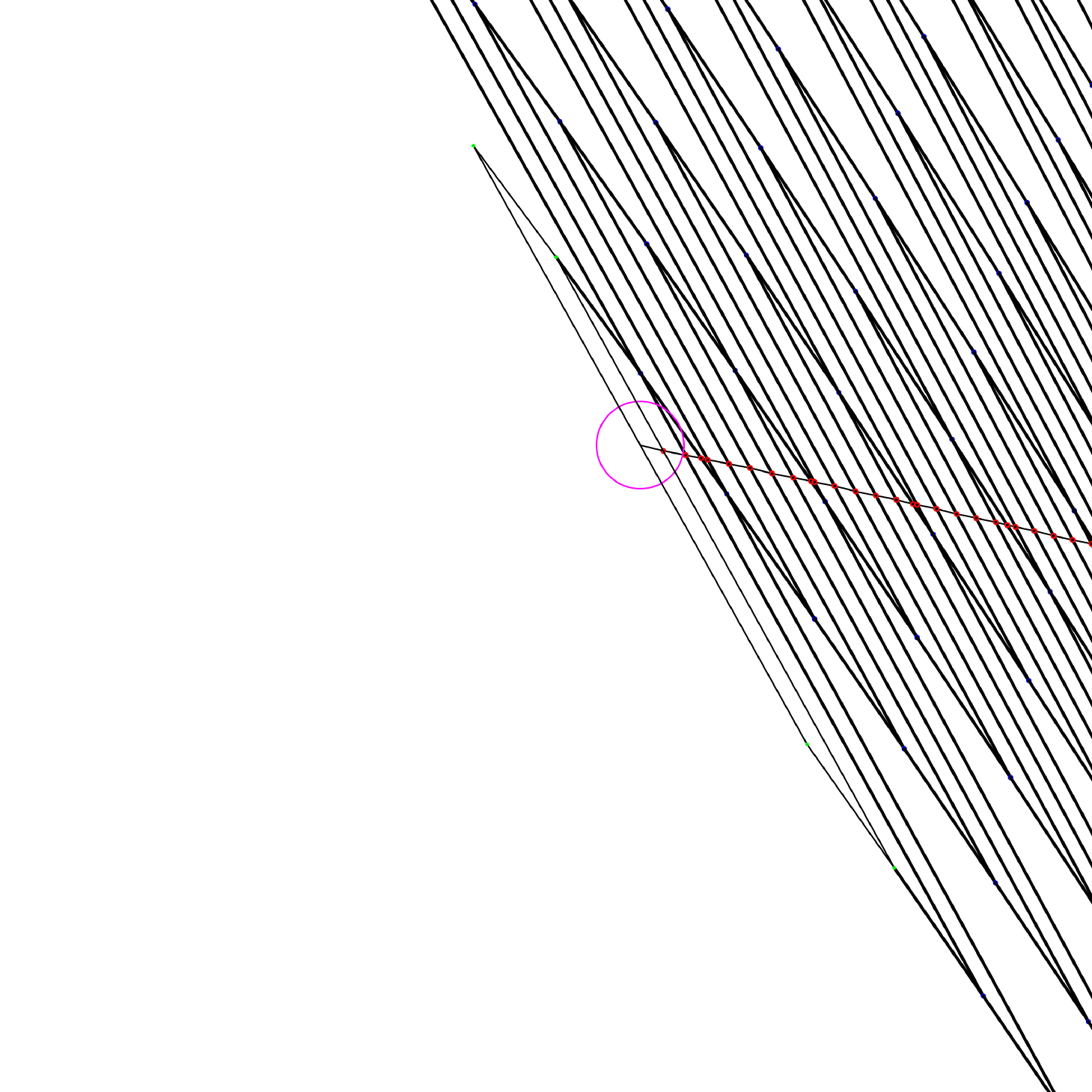}
	\end{minipage}
	\hfill
	\begin{minipage}{0.45\textwidth}
		\includegraphics[width=\textwidth]{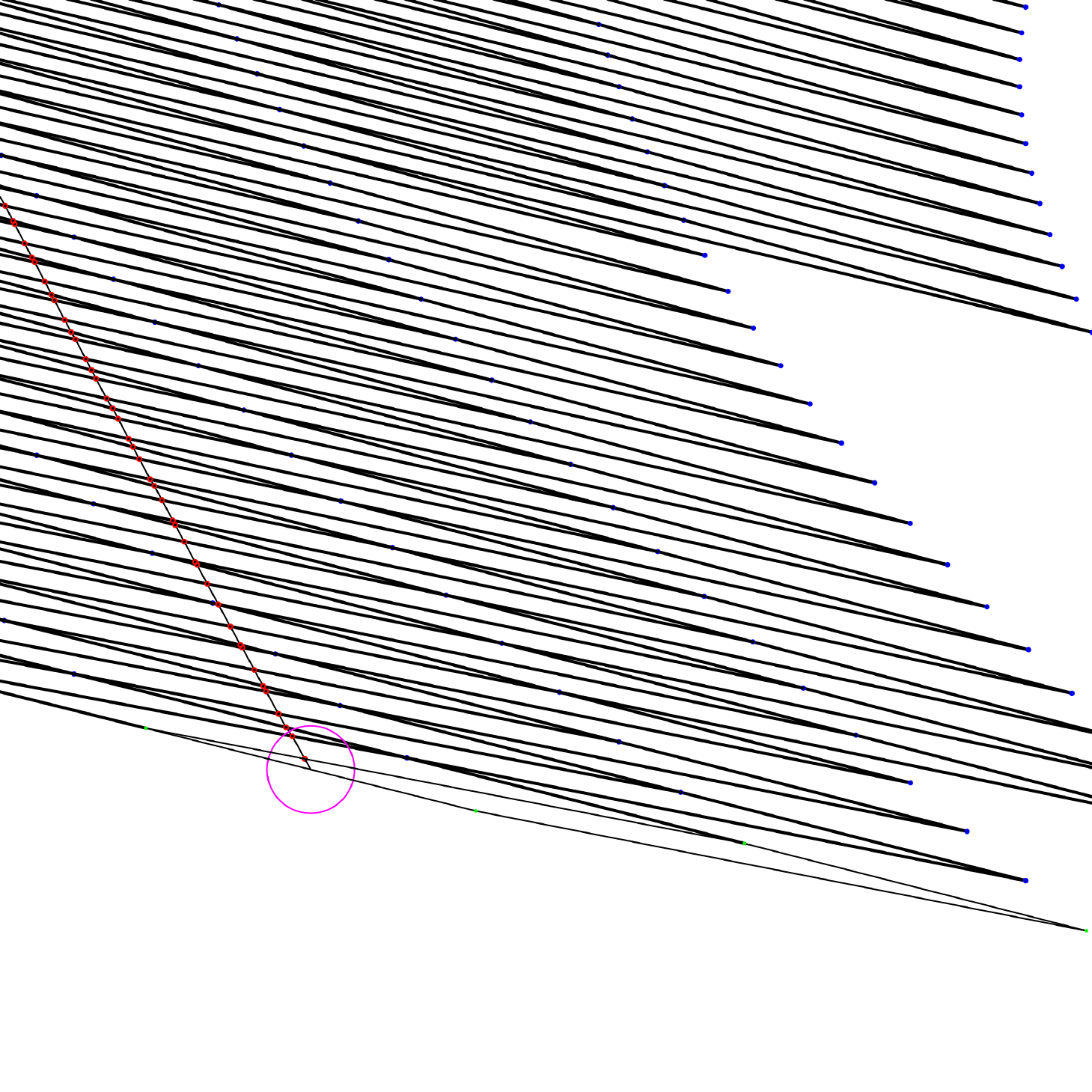}
	\end{minipage}
	\caption{Examples of tilings $T_{k},T_{K}$ where the marked tiles $P$ and $Q$ are positively thin relative to one another as in hypotheses of Theorem \ref{thm:noneucdiv}.  Here the period parameters are given by $k = (2/3,1/4)$ and $K = (1/4,2/3)$.  Both particles are configured on non-negative edges.}  
	\label{fig:symtil5} 
\end{figure}

\begin{figure}
	\begin{minipage}{0.45\textwidth}
		\includegraphics[width=\textwidth]{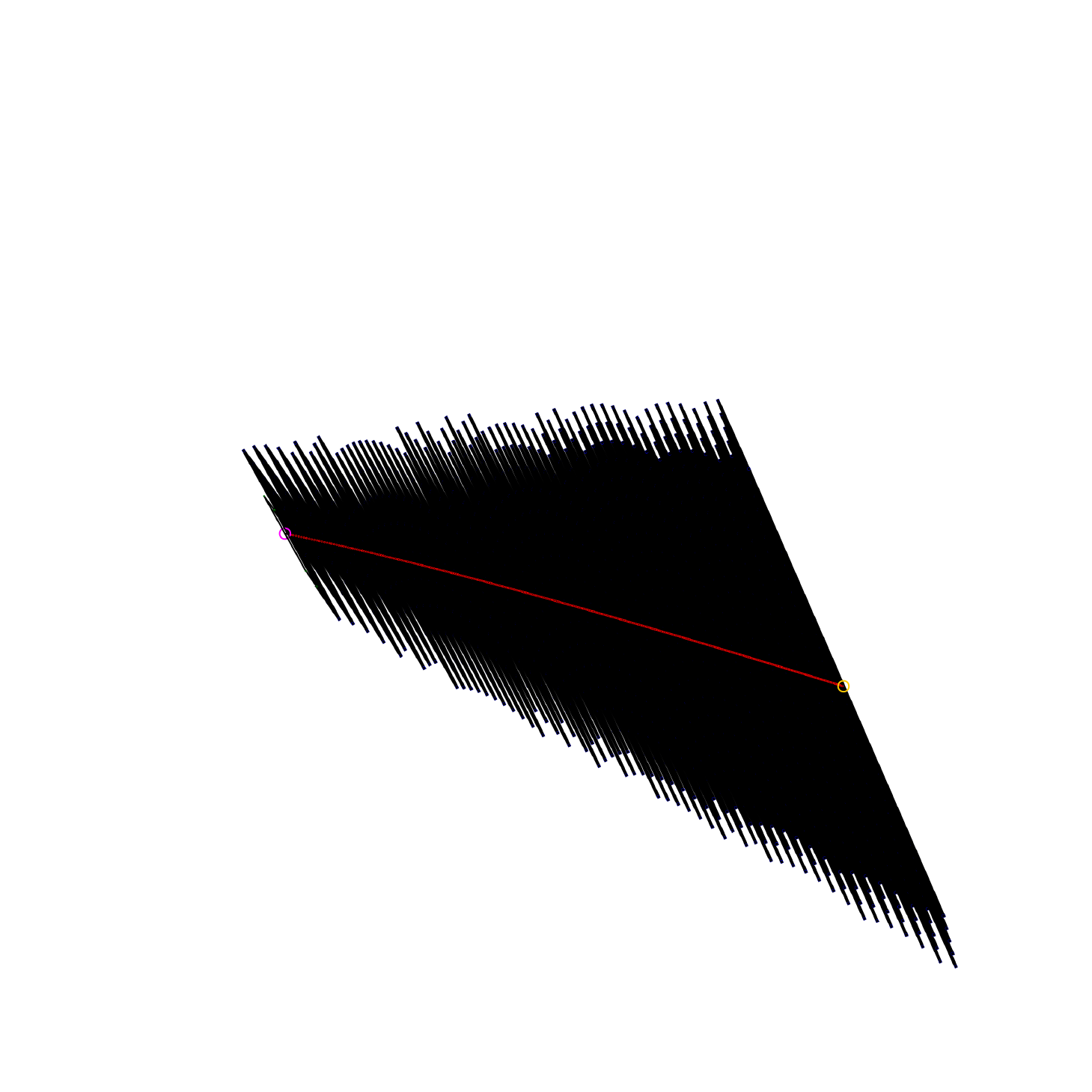}
	\end{minipage}
	\hfill
	\begin{minipage}{0.45\textwidth}
		\includegraphics[width=\textwidth]{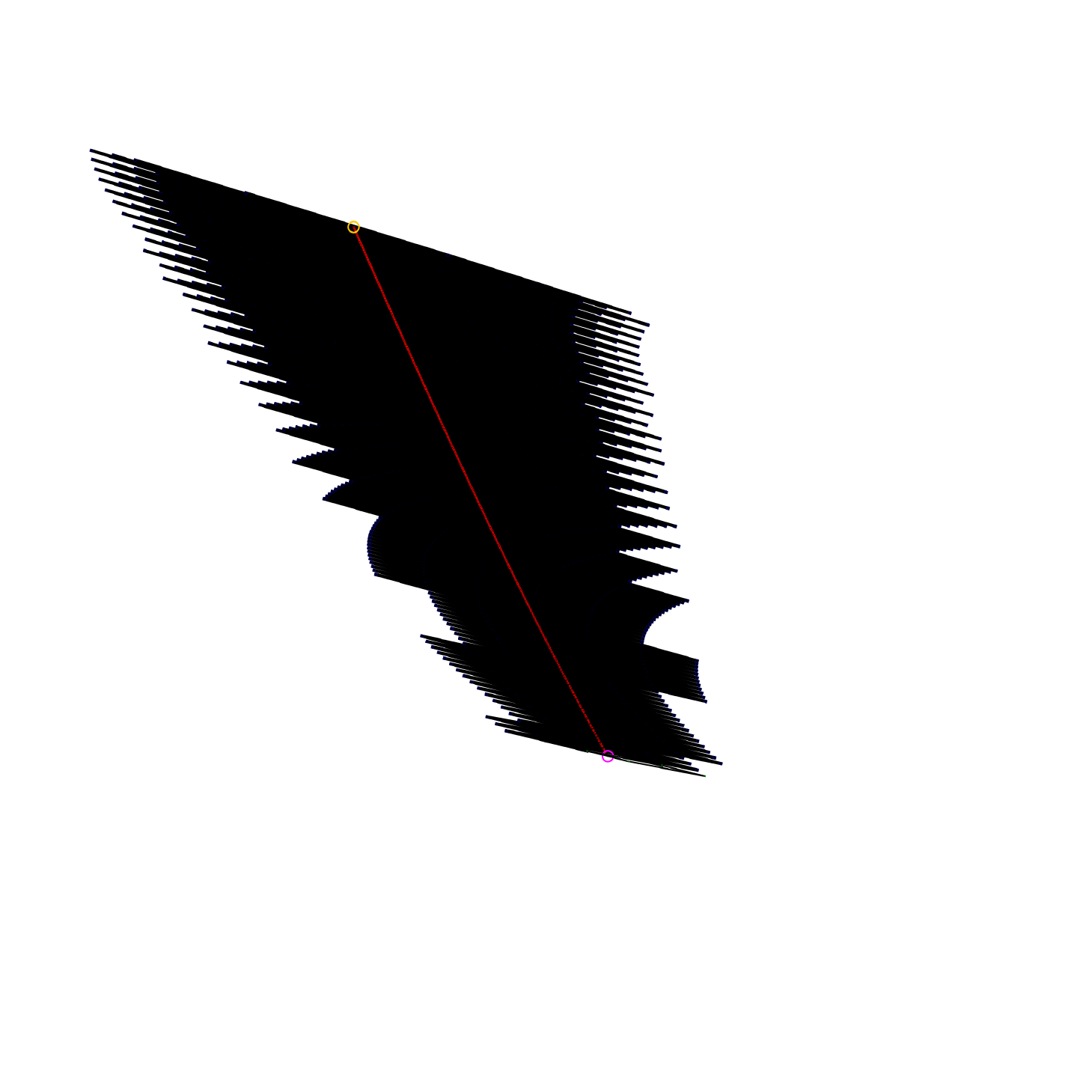}
	\end{minipage}
	\caption{The same setting as in Figure \ref{fig:symtil5} but zoomed out.  The initial points are drawn with pink circles and the terminal points are drawn with orange ones.  One can see the orbits satisfy the conditions of Equation \ref{eq:noneucasym} as the invariant vector fields are directed towards $[X_{k}] = [(1/4,-2/3)]$ and $[X_{K}] = [(2/3,-1/4)]$ respectively.  Both of these trajectories can be seen to lie on the half planes through $p_{0}$ and $q_{0}$ defined by these vectors respectively, and thus asymptotically, diverge along their respective one forms.}  
	\label{fig:symtil6} 
\end{figure}

\begin{figure}
	\begin{minipage}{0.45\textwidth}
		\includegraphics[width=\textwidth]{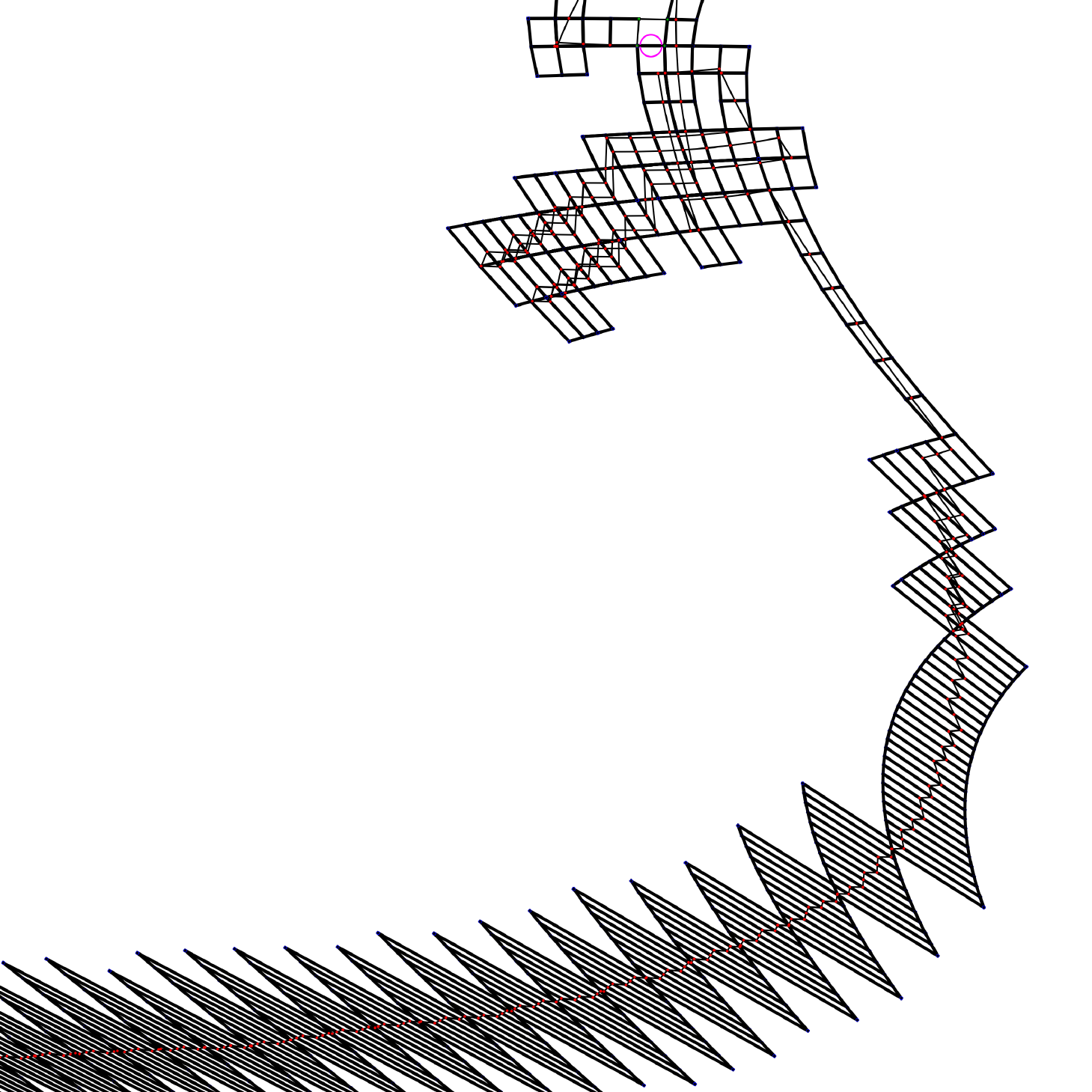}
	\end{minipage}
	\hfill
	\begin{minipage}{0.45\textwidth}
		\includegraphics[width=\textwidth]{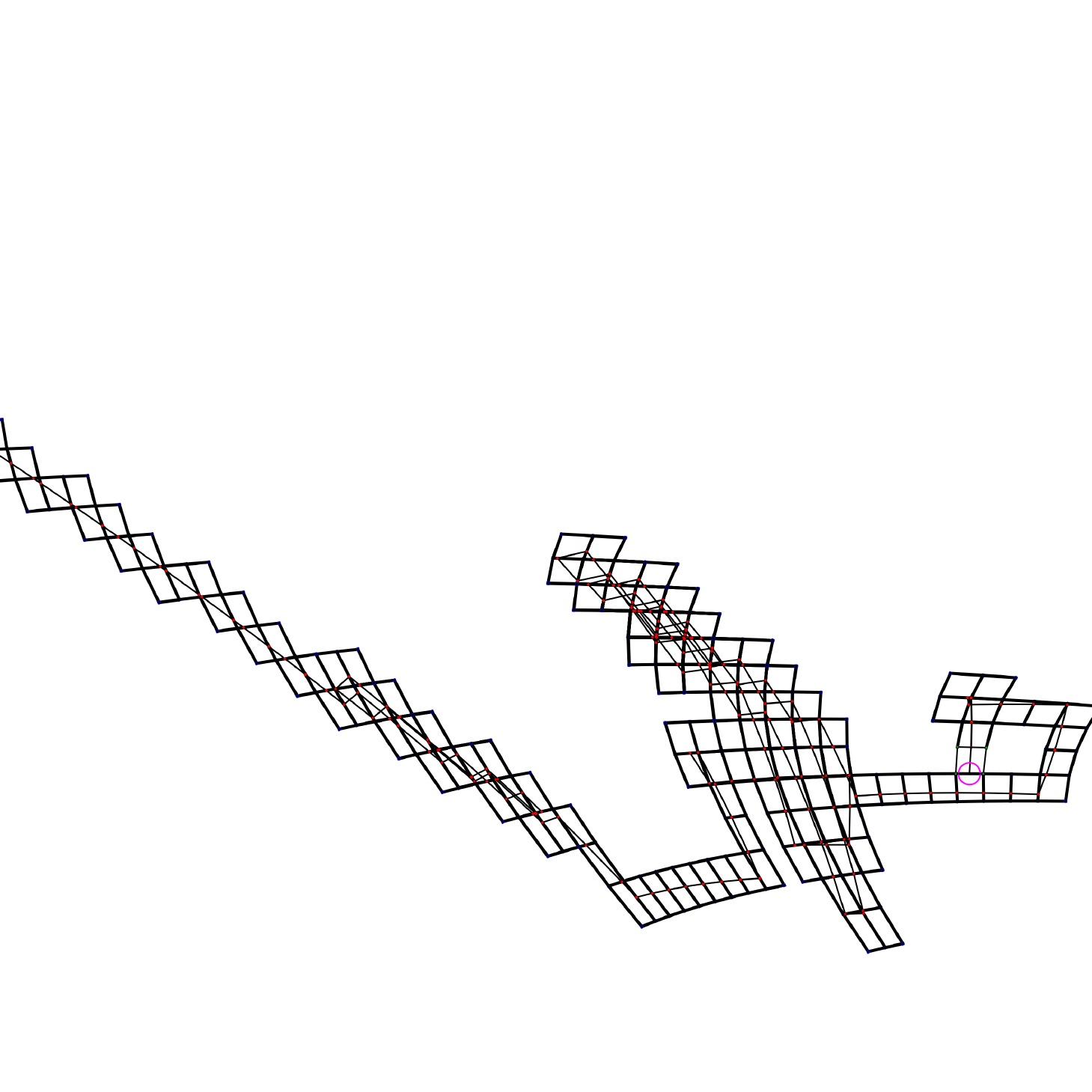}
	\end{minipage}
	\caption{Trajectories for the tiles $T_{k}, T_{K}$ where $k = (1/6,1/2)$ and $K = (1/5,1/2)$.  After about 400 iterations one can appreciate the trajectory of $T_{k}$ seems to resemble a parabola.}  
	\label{fig:symtil10} 
\end{figure}
  
It is unclear to the authors what can be said about the direction of the trajectories in the plane apart from their divergence.  Our numerical analyses do not conclude they limit to any of the obvious invariants such as flows along the invariant parallel or polynomial vector fields in $\R^{2}$.  One can however say on the torus, each individual iteration becomes more parallel to the canonical flow.  This can be seen through the following arguments.  Focus our attention on the tiling $T_{K}$.  Because the trajectories of $T_{K}$ diverge as in Equation \ref{eq:noneucasym}, the tiles flatten out and their edges become more parallel to the canonical directions $[\pm X_{T_{K}}] \in \fD$.  Moreover, the possible edge directions from the tiles of $T_{k}$ lie in a compact subset of $\fD$ which under a sequence of divergent positive loops must satisfy $L_{T_{k}}(\gamma_{n})[v] = [\pm X_{T_{k}}]$ by Lemma \ref{lem:getthin}.  Consequently on the torus defined by $T_{K}$, the linear holonomy $L_{K}$ takes this compact subset of directions closer to $[\pm X_{T_{K}}]$, and thus the trajectories on the torus tend closer to the canonical flow, but, never reach it unless they were parallel to begin with, which by the hypotheses imposed on our tilings as in Theorem \ref{thm:goodtiles}, does not happen.


\bibliographystyle{amsplain}
\bibliography{STBCT.bib}

\end{document}